\newtheorem{thm}{Theorem}[section]
\newtheorem{prop}[thm]{Proposition}
\newtheorem{conjecture}[thm]{Conjecture}
\newtheorem{lemma}[thm]{Lemma}
\newtheorem{defn}[thm]{Definition}
\newtheorem{preremark}[thm]{Remark}
\numberwithin{equation}{section}
\newcommand{\R}{\mathbb R}
\newcommand{\I}{\mathrm I}
\newcommand{\eps}{\varepsilon}
\newcommand{\dd} {\; \mathrm{d}}
\newcommand{\vv}{\langle v \rangle}
\DeclareMathOperator{\tr}{tr}
\title{Regularity estimates and open problems in kinetic equations}
\author{Luis Silvestre}
\address[L.~Silvestre]{Mathematics Department, University of Chicago,
  Chicago, Illinois 60637, USA} \email{luis@math.uchicago.edu}
\thanks{Luis Silvestre is supported by NSF grants 2054888 and 1764285.}
\begin{document}

\begin{abstract}
We survey some new results regarding a priori regularity estimates for the Boltzmann and Landau equations conditional to the boundedness of the associated macroscopic quantities. We also discuss some open problems in the area. In particular, we describe some ideas related to the global well posedness problem for the space-homogeneous Landau equation for Coulomb potentials.
\end{abstract}
\maketitle

\section{Introduction}
In this note, we discuss some recent developments and open problems concerning regularity estimates for kinetic equations. The material is based on the minicourse given by the author in the Barret Memorial lectures in May 2021. The lectures focused on the conditional regularity estimates for the Boltzmann equation. In this notes, we also discuss some additional topics. We describe the conditional regularity estimates for the Boltzmann and Landau equations, and the main techniques leading to that result. We explain how some techniques that were initially explored in the context of elliptic nonlocal equations apply to the context of the Boltzmann equation. We also discuss open problems in the space-homogeneous setting, and present some ideas and conjectures related to those problems.

The program on conditional regularity consists in studying solutions to the inhomogeneous Boltzmann or Landau equation under the assumption that their associated macroscopic hydrodynamic quantities stay pointwise bounded. This assumption rules out the formation of an implosion singularity that would be visible macroscopically. The conditional regularity estimates tell us, essentially, that no other type of singularity other than macroscopic implosions are possible for the non-cutoff Boltzmann or Landau equations.

There are various tools that play a role in obtaining the conditional regularity result. The first step, which is about $L^\infty$ bounds works for the case of hard or moderately soft potentials. In the very soft potential case (which corresponds to $\gamma+2s<0$, where $\gamma$ and $s$ are certain parameters in the collision kernel $B$ defined below), the lower order reaction term in the collision operator is too singular and difficult to control with the diffusion term. Currently, there are no upper bounds in the very soft potential case even for space-homogeneous solutions. The most extreme case, and also arguably the most interesting, is the Landau-Coulomb equation. We discuss the well known open problem of existence of global-in-time solutions for the Landau-Coulomb equation in Section \ref{s:space-homogeneous}. In this section we present informally some ideas that lead to new points of view on the problem. We include some conjectures.

We describe the regularity results including minimal technical details about their proofs. A more detailed description of the methods involved in obtaining the conditional regularity result for the Boltzmann equations can be found, still in survey form, in \cite{imbert-silvestre-survey2020}.

\section{Description of the equations}

Kinetic equations describe the evolution in time of a function $f(t,x,v)$ representing the density of particles in a dilute gas with respect to their position and velocity. The kinetic representation of a fluid lies at an intermediate scale between the large dimensional dynamical system following the position and velocity of each and every particle, and the macroscopic hydrodynamic description given by the Euler and Navier-Stokes equations.

A kinetic equation typically has the following form
\[ f_t + v \cdot \nabla_x f + F(t,x) \cdot \nabla_v f = Q(f,f).\]

The first two terms on the left hand side form the pure transport equation. In absence of any force, each particle moves at a straight line with its corresponding velocity $v$. A model in which there is no interaction between particles would simply be $f_t + v \cdot \nabla_x f = 0$. The third term, $F(t,x) \cdot \nabla_v f$, accounts for the macroscopic force. The force $F(t,x)$ may be the result of external forces, or a mean field potential force computed in terms of the distribution $f$ itself. The right hand side, $Q(f,f)$, accounts for local interactions between particles, typically in the form of collisions.

In these notes, we concentrate in models that are limited to local interactions. We take $F \equiv 0$ and reduce the equation to
\begin{equation} \label{e:boltzmann}
f_t + v \cdot \nabla_x f = Q(f,f).
\end{equation}

The right hand side $Q(f,f)$ is a quadratic nonlocal operator acting on $f(t,x,\cdot)$, for each value of $(t,x)$. The Boltzmann collision operator has the following form
\begin{equation} \label{e:bco}
Q(f,f)(v) = \int_{\R^d} \int_{S^{d-1}} (f'_\ast f' - f_\ast f) B(|v-v_\ast|,\cos \theta) \dd \sigma \dd v_\ast.
\end{equation}
Let us make some clarifications to understand the expression for $Q(f,f)$. It is a double integral, with respect to two parameters $v_\ast \in \R^d$ and a spherical variable $\sigma \in S^{d-1}$. The notation $f$, $f_\ast$, $f'$ and $f'_\ast$ denotes the values of the function $f$ evaluated at the same value of $(t,x)$, but with different velocities $v$, $v_\ast$, $v'$ and $v'_\ast$ respectively. The last two values depend on the integration parameters through the formulas
\begin{align*}
v' &= \frac{v+v_\ast}2 + \frac{|v-v_\ast|}2 \sigma, \\
v'_\ast &= \frac{v+v_\ast}2 - \frac{|v-v_\ast|}2 \sigma.
\end{align*}

First term in the integral, $f'_\ast f'$, accounts for particles with pre-collisional velocities $v'_\ast$ and $v'$ that may be colliding at the point $(t,x)$ and turning their velocities to $v$ and $v_\ast$. The loss term $-f_\ast f$ accounts for particles of velocities $v_\ast$ and $v$ that are colliding and changing their velocities. The rate by which these collisions occur is measured by the nonnegative kernel $B$. There are different choices for this kernel depending on modelling assumptions.

The angle $\theta$ measures the deviation between the pre and postcollisional velocities. It is precisely given by the formula
\[ \cos(\theta) = \frac{(v'-v'_\ast) \cdot (v-v_\ast)}{|v-v_\ast|^2}.\]
We write the collision kernel as $B(|v-v_\ast|,\cos \theta)$ to enphasize that it only depends on these two values. In particular, it is an even and $2\pi$-periodic functions of $\theta$.

The Boltzmann collision operator \eqref{e:bco}, for any kernel $B \geq 0$ and any function $f \geq 0$, has remarkable cacellation properties that we list here
\begin{itemize}
  \item $\int Q(f,f) \dd v = 0.$
  \item $\int v Q(f,f) \dd v = 0.$
  \item $\int |v|^2 Q(f,f) \dd v = 0.$
  \item $\int Q(f,f) \log f \,\dd v \leq 0.$
\end{itemize}
Each of these items corresponds to a conserved or monotone physical quantity. Any solution $f$ of the Boltzmann equation \eqref{e:boltzmann} formally satisfies the following conservation laws
\begin{itemize}
  \item Conservation of mass: the integral $\iint f(t,x,v) \dd v \dd x$ is constant in time.
  \item Conservation of momentum: the integral $\iint v \, f(t,x,v) \dd v \dd x$ is constant in time.
  \item Conservation of energy: the integral $\iint |v|^2 \, f(t,x,v) \dd v \dd x$ is constant in time.
  \item Entropy dissipation: the integral $\iint f(t,x,v) \log f(t,x,v) \dd v \dd x$ is monotone decreasing in time.
\end{itemize}

It can be verified directly that the only functions $f : \R^d \to [0,\infty)$ so that $Q(f,f)=0$ are the Gaussians. In other words, $Q(f,f)=0$ if and only if $f(v) = a \exp(-b |v-u|^2)$ for some $a,b \geq 0$ and $u \in \R^d$. In the kinetic context, these stationary solutions of \eqref{e:boltzmann} are called \emph{Maxwellians}.

\subsection{Common collision kernels and variants}

The exact form of the collision kernel $B$ depends on modelling choices. Particles that bounce against each other like billiard balls lead to a different collision kernel that if we modeled particles that behave like sponges. There are many different possible microscopic interactions between particles, leading to a variety of choices for the collision kernel $B$. Rigorously justifying the derivation of the Boltzmann equation from microscopic particle interactions is a delicate mathematical problem, with several fundamental open questions. In many cases, one can compute informal derivations based on euristics. In this section, we merely list the most common collision kernels $B$ that are found in the literature, without any attempt to justify them.

The hard spheres model is the one that results from particles that bounce like billiard balls. Its collision kernel $B$ takes a particularly simple form
\[ B(r,\cos \theta) = r.\]

Another natural model is to consider point particles that repell each other by a potential of the form $1/r^q$, with $q \geq 1$. In that case, the kernel $B$ is not explicit, but it satisfies the following bounds
\begin{equation} \label{e:non-cutoff}
B(r,\cos \theta) \approx r^\gamma |\sin(\theta/2)|^{-d+1-2s}.
\end{equation}
Here, we write $a \approx b$ to denote the fact that there is a constant $C$ so that $a/C \leq b \leq Ca$. The parameters $\gamma$ and $s$ depend on the power $q$ by the formulas
\[ \gamma=\frac{q-2d+2}{q}, \qquad 2s = \frac 2 {q}.\]
These kernels have a singularity at $\theta = 0$. Remarkably, for any values of $v$ and $v_\ast$
\[ \int_{S^{d-1}} B(|v-v_\ast|,\cos(\theta)) \dd \sigma = \infty.\]
The expression \eqref{e:bco} still makes sense for any smooth function $f$ when $s \in (0,1)$. This is because the cancellation in the factor $(f'_\ast f' - f_\ast f)$ compensates the singularity in $B$. It is the same way we normally make sense of general integro-differential operators with singular kernels. Indeed, as we will see in Section \ref{s:ide}, the Boltzmann collision operator consists of a diffusion term of fractional order $2s$ plus a lower order term.

When $s=1$, the kernel $B$ is too singular to make sense of the integral in \eqref{e:bco}. Taking $B(r,\cos(\theta)) = c_d (1-s) r^\gamma \sin(\theta/2)^{-d+1-2s}$, for some dimensional constant $c_d$, the expression \eqref{e:bco} converges to the following expression as $s \to 1$.
\begin{equation} \label{e:landau_expression1}
Q(f,f)(v) = \bar a_{ij}(v) \partial_{ij} f(v) + \bar c(v) f(v).
\end{equation}
Here
\begin{equation} \label{e:landau}
\bar a_{ij} = \int_{\R^d} (|w|^2 \delta_{ij} - w_i w_j) |w|^\gamma f(v-w) \dd w, \qquad \bar c = \partial_{ij} \bar a_{ij} = c f \ast |\cdot|^\gamma.
\end{equation}
The case of Coulombic potentials in 3D corresponds to $d=3$, $q=1$, $s=1$ and $\gamma=-3$. In that case, the convolution in \eqref{e:bco} also becomes too singular. An appropriate asymptotic limit \eqref{e:landau} as $\gamma \to -3$ leads to the Landau-Coulomb operator with
\begin{equation} \label{e:landau-coulomb}
\bar a_{ij} =  \partial_{ij} (-\Delta)^{-2} f = \frac 1 {8\pi} \int_{\R^3} (|w|^2 \delta_{ij} - w_i w_j) |w|^{-3} f(v-w) \dd w, \qquad \bar c = f.
\end{equation}

There are alternative equivalent expressions to \eqref{e:landau_expression1} that are useful depending on the type of computation that we want to do. The following integral expression is equivalent to \eqref{e:landau_expression1} and is useful, for example, to prove the dissipation of entropy.
\[ Q(f,f) = \partial_i \int_{\R^d} (|w|^2 \delta_{ij} - w_i w_j) |w|^\gamma ( f(v-w) \partial_j f(v) - \partial_j f(v-w) f(v) ) \dd w. \]

The Landau operator is the limit of the Boltzmann collision operator as its angular singularity approaches its limit point $s=1$. One could argue that because it is the most singular case, then the Landau operator should be \emph{harder} to study than the Boltzmann collision operator. However, the Landau operator involves only classical differentiation instead of an integro-differential expression. At the end, depending on what we want to prove, sometimes the Landau equation is harder, and sometimes it is simpler, than the Boltzmann equation.

It is useful to analyze the Landau operator as a way to understand the nature of the Boltzmann collision operator as well. The first term is a diffusion operator whose coefficients depend on the solution $f$. It is a quasilinear second order operator. The second term is of lower order. Note that the Landau operator is \textbf{not} the Laplacian plus a lower order term (i.e. it is not semilinear), in the same way that the Boltzmann collision operator is \textbf{not} the fractional Laplacian plus a lower order term.

\medskip

The Boltzmann collision operator \eqref{e:bco} does not make sense in one dimension ($d=1$). The constraints on the pre and postcollisional velocities force $\{v',v'_\ast\} = \{v,v_\ast\}$. There is a common one-dimensional toy model proposed by Kac in which there is conservation of energy but not of momentum. The Kac collision operator takes the following form.
\[ Q(f,f)(v) = \int_{\R} \int_{-\pi}^{\pi} (f'_\ast f' - f_\ast f) B(|v-v_\ast|,|\theta|) \dd \theta \dd v_\ast. \]
Here, we write
\begin{align*}
v' &= v \cos \theta - v_\ast \sin \theta, \\
v'_\ast &= v \sin \theta + v_\ast \cos \theta.
\end{align*}

In the original Kac model $B(r,|\theta|) = 1/2\pi$. It makes sense to consider non-cutoff kernels with the bounds \eqref{e:non-cutoff}. It is natural to expect that the regularity estimates that we prove for the Boltzmann equation in Theorem \ref{t:conditional-regularity_boltzmann} should apply to the Kac model as well. However, so far it has not been explicitly analyzed as far as we are aware.

\subsection{Mathematical problems in kinetic equations}

There are many mathematical problems related to kinetic equations. An important area or research is on the rigorous derivation of the models. We can study the derivation of every variant of the Boltzmann equation and the Landau equation from particle models. We can also study the derivation of the Euler and Navier-Stokes equation from the Boltzmann equation.

Another, rather different, direction of research is on the well posedness of the equations. Ultimately, the key to prove a well posedness result lies on the a priori estimates that we are able to obtain for the solution. Using merely the conservation of mass and the entropy dissipation, one can construct a very weak notion of global solution. They are the \emph{renormalized solutions} for the cutoff case, and the \emph{renormalized solutions with defect measure} for the non-cutoff case. The uniqueness of solutions within this class is currently unknown.

From an optimistic perspective, one may expect a solution $f$ to be $C^\infty$ smooth, strictly possitive everywhere, and with a strong decay as $|v| \to \infty$. Proving the existence of such a solution would require estimates that ensure these properties. The main focus of this note is on these a priori estimates.

There are several kind of estimates that have attracted people's attention through the years. The following is a rough attempt to classify them.
\begin{enumerate}
  \item \textbf{Moment estimates}. They refer to upper bounds on quantities of the form
  \[ \iint f(t,x,v) \omega(v) \dd v \dd x.\]
  Typically $\omega(v) = \langle v \rangle^q$. A moment estimate for some large value of $q$ should be understood as a type of decay estimate for $|v| \to \infty$.
  \item \textbf{Pointwise upper bounds}. They refer to an inequality of the form $f(t,x,v) \lesssim \omega(v)$. Typically, $\omega(v) = \langle v \rangle^{-q}$. One can easily argue that pointwise upper bounds are more desirable than moment estimates. Note that a bound of the form $f(t,x,v) \lesssim \langle v \rangle^{-d-q}$ implies a moment estimate for the weight $\langle v \rangle^p$ for any $p<q$.
  \item \textbf{Regularity estimates}. They may be upper bounds on some weighted Sobolev norm, on some H\"older norm, or an upper bound on the derivatives of $f$.
  \item \textbf{Lower bounds}. They quantify how far $f$ is from vacuum. They have the form $f \gtrsim \omega(v)$. Because the Maxwellians are stationary solutions, the best possible lower bound one can hope for would have a Gaussian decay as $v \to \infty$.
  \item \textbf{Convergence rates to equilibrium}. As time $t \to +\infty$, it is natural to expect that the solution $f$ will converge to the equilibrium Maxwellian distribution. In some cases, it is possible to quantify this rate of convergence.
\end{enumerate}

We use the Japanese bracket convention $\langle v \rangle := \sqrt{1+|v|^2}$.

Estimates of every kind described above are known in some circumstances. Below, we describe some of the mathematical difficulties and common simplifications.

\subsubsection{Cutoff vs non-cutoff}

It may seem convenient to be able to make sense of the formula \eqref{e:bco} for $Q(f,f)(v)$, even when $f$ is not necessarily smooth. For that, it would be desireable that the collision kernel $B$ is integrable with respect to the variable $\sigma$. The condition is known as Grad's cutoff assumption. It says that for any $v,v_\ast \in \R^d$,
\begin{equation} \label{e:cutoff}
\int_{S^{d-1}} B(|v-v_\ast|,\cos \theta) \dd \sigma < +\infty.
\end{equation}
The collision kernel $B$ corresponding to the hard spheres model satisfies this condition. The kernels $B$ that arise from inverse power law models \eqref{e:non-cutoff} do not satisfy \eqref{e:cutoff}. The Boltzmann equation with a kernel of the form \eqref{e:non-cutoff} would be referred to as the non-cutoff Boltzmann equation.

In the cutoff case, when \eqref{e:cutoff} holds, it is possible to split the Boltzmann operator $Q(f,f)$ into its \emph{gain} and \emph{loss} terms. That is $Q=Q_+ - Q_-$, where
\[ Q_+ = \int_{\R^d} \int_{S^{d-1}} f'_\ast f' \, B(|v-v_\ast|,\cos \theta) \dd \sigma \dd v_\ast, \qquad Q_- = \int_{\R^d} \int_{S^{d-1}} f_\ast f \, B(|v-v_\ast|,\cos \theta) \dd \sigma \dd v_\ast.\]
Both terms $Q_+$ and $Q_-$ are nonnegative. It is common to use this decomposition when studying upper and lower bounds for the cutoff Boltzmann equation.

In the non-cutoff case, it makes no sense to split $Q$ between a gain and a loss term. Indeed, the terms $Q_+$ and $Q_-$ defined above would be both equal to $+\infty$ in general.

Interestingly, the non-cutoff Boltzmann equation exhibits a regularization effect that does not hold in the cutoff case. The Boltzmann collision operator \eqref{e:bco} is in some sense a quasilinear integro-differential elliptic operator of order $2s$. This regularization effect is the reason why this note focuses on the non-cutoff case.

\subsection{Homogeneous vs inhomogeneous}

When we consider solutions to \eqref{e:boltzmann} that are constant in $x$, the equation is significantly simpler. The second term vanishes and we have
\[ f_t = Q(f,f).\]
Thinking that $Q(f,f)$ is a quasilinear elliptic operator, what we have is a nonlinear parabolic equation that satisfies
\begin{itemize}
  \item \textbf{Conservation of mass:} $\int_{\R^d} f(t,v) \dd v$.
  \item \textbf{Conservation of momentum:} $\int_{\R^d} v f(t,v) \dd v$.
  \item \textbf{Conservation of energy:} $\int_{\R^d} |v|^2 f(t,v) \dd v$.
  \item \textbf{Monotone decreasing entropy:} $\int_{\R^d} f(t,v) \log f(t,v) \dd v$.
\end{itemize}

From a macroscopic perspective, the space homogeneous equation is rather plain. It corresponds to a fluid whose macroscopically observable quantities are homogeneous and stationary. From a mathematical perspective, the homogeneous problem is significantly simpler than the inhomogeneous. The well posedness is well understood in the homogeneous non-cutoff Boltzmann equation when $\gamma+2s \geq 0$. Remarkably, the existence of global in time classical solutions when $\gamma+2s < 0$ remains open.

\section{On the existence of solutions}

For any evolution equation that comes from mathematical physics, we are interested in whether the problem is well posed or not. In the case of the Boltzmann equation, we consider the Cauchy problem for a given initial value $f_0(x,v) \geq 0$, and we would want to determine if there exist a solution $f$ to \eqref{e:boltzmann} so that $f(0,x,v) = f_0(x,v)$.

We know that a generalized notion of solution exists globally. It was established first in the cutoff case \cite{diperna1989} and then also for non-cutoff \cite{alexandre-villani2002renormalized}. It is unknown, for any kind of physically relevant collision kernel, whether these solutions may develop singularities in finite time starting from smooth data. It is unclear if the solution will stay unique past a potential singularity. Establishing the existence of global smooth solutions depends crucially on obtaining a priori regularity estimates.

Results about the existence of classical solutions can be classified in three rough sub-categories. They are the following.
\begin{enumerate}
  \item \textbf{Global well posedness near equilibrium.} If the initial value $f_0$ is sufficiently close to a Maxwellian with respect to some appropriate norm, then it is often possible to prove that there exists a smooth solution with $f_0$ as its initial data.
  \item \textbf{Short time existence results.} For generic values of $f_0$ in some functional space, a short-time existence result says that there exists a solution $f$ in some interval of time $[0,T]$ for $T$ sufficiently small (depending on $f_0$).
  \item \textbf{Unconditional global well posedness.} For generic values of $f_0$ in some functional space, we would want to prove that there exists a solution $f$ that exists forever. This is the ultimate goal, but it might also be too ambitious.
\end{enumerate}

The analysis of solutions for short-time is a basic type of well posedness. However, it is more complex issue than one might initially expect. The first result of this kind for the inhomogeneous non-cutoff Boltzmann equation appeared in \cite{amuxy2011qualitative} (see also \cite{amuxy-existence2}). It applies for $0<s<1/2$ and $\gamma+2s < 1$. It also requires the initial data to belong to a Sobolev space involving at least four derivatives in $x$ and $v$, with a weight that forces $f_0$ to have Gaussian decay as $v \to \infty$. In particular, the result in \cite{amuxy2011qualitative} cannot be applied for initial data that has polynomial, or even simply exponential decay, for large velocities.

The first result that allows initial data with algebraic decay rate appeared in \cite{morimoto2015polynomial}. It was later improved in \cite{henderson2020polynomial}. It applies in the case of soft potentials ($\gamma \leq 0$) and requires the initial data in $H^4$ with a polynomial weight.

As far as we are aware, there are no further short-time existence results for the inhomogeneous Boltzmann equation without cutoff explicitly covered in the literature. The case of initial data with algebraic decay rate (as in $f_0 \lesssim \langle v \rangle^{-q}$), with hard potentials ($\gamma > 0$) is not handled by any documented result. It is remarkable that even though the Boltzmann equation is one of the most studied problems in mathematical physics, such a basic question still remains unanswered. Note also that the issue of uniqueness of solutions depends exclusively on our short-time well posedness theory.

There are a number of papers dealing with solutions near equilibrium, for practically the full range of parameters $\gamma$ and $s$. See \cite{gressmanstrain2011,amuxy2011_hard,amuxy2012_soft,duan2019global,herau2020regularization,alonso2018non,alonso2020giorgi,zhang2020global,silvestre2021solutions}. From the proofs in any of these papers, in theory one should be able to extract a proof of short-time existence. Its conditions on the initial data would be milder than for the global existence result. However, in most of these papers the short-time existence results are not explicitly stated, and it is difficult to determine what can be done in each case.

For any inhomogeneous model like \eqref{e:boltzmann}, with a nontrivial and physically meaningful collision kernel $Q$, the global existence of classical solutions for generic initial data $f_0$ remains an outstanding open problem.

A short-time existence result is a first step in the Cauchy theory. In order to make the solution global, we would need sufficiently strong a priori estimates. If the solution stays bounded in a sufficiently strong norm, one would be able to reapply the short time existence result arbitrarily many times and extend the solution forever. The key for global well posedness would be to combine a short-time existence result with good enough a-priori estimates. It is not necessary to work with weak solutions. A priori estimates for classical solution would suffice, since these are the type of solutions provided by short-time existence results. We review a priori estimates for the inhomogeneous Boltzmann equation in these notes. As we will see, they are not strong enough to ensure the unconditional global well posedness of the equation yet.

While global well posedness results near equilibrium are interesting in themselves, they say hardly anything about the solutions for arbitrary initial data. Moreover, any appropriate short-time existence result combined with the convergence to equilibrium \cite{desvillettes2005global}, and Theorem \ref{t:conditional-regularity_boltzmann}, implies the existence of global solutions when the initial data is sufficiently close to a nonzero Maxwellian (see \cite{silvestre2021solutions}). It is also possible to study the global well posedness of the equations when the initial data in near zero (see \cite{luk2019} and \cite{chaturvedi2021}). It is a completely different setting, driven by dispersion rather than by diffusion.

\subsection{Conditional regularity estimates}

We now describe our recent results on a priori regularity estimates. Let us start with some discussion of what we would want to prove, and what we can realistically achieve.

The hydrodynamic equations, like Euler and Navier-Stokes, describe the evolution of a fluid at a coarser scale than kinetic equations. The velocity $u(t,x)$, density $\rho(t,x)$, and temperature $\theta(t,x)$, in the compressible Euler equation make sense at a macroscopic scale. Microscopically, they correspond to some local average over particles of the fluid around each point $(t,x)$. The compressible Euler equation can be obtained formally as an asymptotic limit of solutions to the Boltzmann equation. The compressible Euler equation develops singularities in finite time. Should we expect the Boltzmann equation to develop singularities as well? Or is there any reason to believe that the Boltzmann equation will act as a regularization of the usual hydrodynamic equations?

There are two very different kinds of singularities that one observes in the compressible Euler equation: shocks and implosions. A \emph{shock} is the same type of singularity that emerges from the flow of Burgers equation. The values of the function stay bounded, and a discontinuity is created when two characteristic curves collide. It is as if the equation was pushing the solution to take two contradictory values at the same point, resulting in a discontinuity. In the kinetic setting, on the contrary, there can be no such thing as a shock singularity. The function $f(t,x,v)$ allows a distribution of different velocities coexisting at each point $(t,x)$. An \emph{implosion} singularity has a very different nature. The flows concentrates at one point, where the energy concentrates and its density explodes. While some implosion profiles for the Euler equations appeared a long time ago, it was only recently that smooth implosion profiles and their stability were properly understood \cite{merle2019smooth,merle2019implosion}. There is no obvious reason to rule out the existence of implosion singularities for the Boltzmann equation. It seems conceivable that there may exist a solution to the Boltzmann equation \eqref{e:boltzmann} whose mass, momentum and temperature density behave similarly as in the implosion singularity of the compressible Euler equation. Rigorously constructing, or ruling out, such a solution seems like a very difficult problem right now.

Beyond the possibility of implosion singularities, we may still wonder if there exist any other kind of kinetic singularities for the Boltzmann equation that are not related to anything that we can see macroscopically in the Euler equation. Our conditional regularity results described below rule it out. Essentially, they say that the solution to the non-cutoff Boltzmann (and Landau) equation will be $C^\infty$ smooth for as long as there is no implosion singularity in its associated macroscopically observable quantities.

Let us state our conditions. They say that the mass density is bounded below and above, the energy density is bounded above, and the entropy density is bounded above. The following three inequalities will be assumed to hold at every point $(t,x)$.
\begin{align}
m_0 \leq \int_{\R^d} f(t,x,v) \dd v &\leq M_0,  \label{e:mass_density_assumption} \\
\int_{\R^d} f(t,x,v) |v|^2 \dd v &\leq E_0, \label{e:energy_density_assumption} \\
\int_{\R^d} f(t,x,v) \log f(t,x,v) \dd v &\leq \label{e:entropy_density_assumption} H_0.
\end{align}

We stress that we will not prove the hydrodynamic inequalities \eqref{e:mass_density_assumption}, \eqref{e:energy_density_assumption} and \eqref{e:entropy_density_assumption}. We take them as an assumption. From the discussion above, one would be inclined to believe that there may exist some initial data that evolves into an implosion singularity for which all these quantities blow up at a point. However, such an solution has not been constructed yet for any physically reasonable collision operator $Q$.

Here is our conditional regularity theorem proved in \cite{imbert2019global}.

\begin{thm} \label{t:conditional-regularity_boltzmann}
Let $f$ be a (classical) solution to the inhomogeneous Boltzmann equation \eqref{e:boltzmann}, periodic in space, where the collision operator $Q$ has the form \eqref{e:bco} and $B$ is the standard non-cutoff collision kernel of the form \eqref{e:non-cutoff} with $\gamma+2s \in [0,2]$.

Assume that there are constants $m_0>0$, $M_0$, $E_0$ and $H_0$ such that the hydrodynamic bounds \eqref{e:mass_density_assumption}, \eqref{e:energy_density_assumption} and \eqref{e:entropy_density_assumption} hold. Moreover, if $\gamma \leq 0$, we also assume that the initial data $f_0$ decays rapidly at infinity: $f_0 \leq C_q \langle v \rangle^{-q}$ for all $q>0$.

\medskip

Then, for $\tau>0$, all derivatives $D^\alpha f$, for any multi-index $\alpha$, are bounded in $[\tau,\infty) \times \R^d \times \R^d$ an decay as $|v| \to \infty$. More precisely, there are constants $C_{\tau, \alpha,q}$ so that
\[ \langle v \rangle^q |D^\alpha f(t,x,v)| \leq C_{\tau,\alpha,q} \qquad \text{for all } t>\tau, x\in \R^d, v \in \R^d.\]
\end{thm}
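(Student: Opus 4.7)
The plan is to carry out a four-step bootstrap whose stages are, in order, (i) an $L^\infty$ bound on $f$, (ii) a pointwise upper bound with fast polynomial decay in $v$, (iii) a Hölder regularity estimate in the kinetic variables $(t,x,v)$, and (iv) a Schauder-type bootstrap upgrading this to $C^\infty$. The unifying idea is that, under the hydrodynamic bounds \eqref{e:mass_density_assumption}--\eqref{e:entropy_density_assumption}, the Boltzmann operator $g \mapsto Q(f,g)$ behaves as a linear integro-differential elliptic operator $\mathcal L_f$ of order $2s$, whose symmetric part has a kernel with upper bounds and a non-degeneracy (coercivity) property depending only on $m_0,M_0,E_0,H_0$. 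Viewing $f$ as solving a linear equation with frozen coefficients $\mathcal L_f f + (\text{lower order term})$ allows us to plug into the regularity theory of kinetic integro-differential equations.

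For step (i), I rewrite the equation as $f_t + v\cdot\nabla_x f = \mathcal L_f f + \bar c f$ and run a level-set energy (De Giorgi) argument along trajectories; the coercivity of $\mathcal L_f$ controls the nonlinear tail, while the hydrodynamic bounds dominate the reaction $\bar c$. For step (ii), I compare $f$ against barriers of the form $C\langle v\rangle^{-q}$ on large-velocity cylinders: for $|v|$ large, the contribution of $\mathcal L_f f$ at the maximum point beats the lower-order term thanks to the non-degeneracy estimate, giving the decay $f \lesssim \langle v\rangle^{-q}$ for every $q$. When $\gamma\le 0$ the upper-bound propagation is weaker, which is precisely why the extra decay assumption on $f_0$ is imposed.

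Step (iii) is the core of the argument. With $f \in L^\infty$ and decay, the equation for $f$ fits into the linear framework $g_t + v\cdot\nabla_x g = \mathcal L g + h$ with a bounded source $h$ and a kernel with measurable, bounded-on-both-sides ellipticity. A kinetic De Giorgi--Nash--Moser theorem (local energy inequality, ink-spot lemma adapted to the Galilean group, weak Harnack) then yields $[f]_{C^\alpha_{\rm kin}} \le C$ in a scale-invariant kinetic metric.

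Step (iv) treats the coefficients of $\mathcal L_f$ (both its diffusion part and the weight defining its kernel) as Hölder continuous in $(t,x,v)$ and applies a Schauder estimate for kinetic integro-differential equations, gaining $2s$ orders of regularity in the elliptic directions and the corresponding anisotropic amounts in $t$ and $x$ per iteration; iterating produces $C^\infty$ smoothness. The decay in $v$ is preserved at each order of regularity by repeating the weighted barrier argument on derivatives. The main obstacle is step (iii): the kinetic De Giorgi--Nash--Moser theory for non-cutoff Boltzmann must contend simultaneously with the anisotropy of transport, the non-locality of $\mathcal L_f$, and the degeneration of its ellipticity for large $|v|$ when $\gamma<0$. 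This is exactly the point at which the decay estimate from step (ii) becomes indispensable, since it is what confines the problem to regions where $\mathcal L_f$ is uniformly elliptic and allows the Harnack machinery to run.
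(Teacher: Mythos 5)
Your overall architecture (decay bounds, then a kinetic H\"older estimate, then a Schauder bootstrap) matches the strategy of the paper, but two of your key steps rest on hypotheses that the Boltzmann kernel does not satisfy. In step (iii) you feed $f$ into a kinetic De Giorgi--Nash--Moser theorem for kernels with ``measurable, bounded-on-both-sides ellipticity,'' i.e.\ pointwise bounds $\lambda |v-v'|^{-d-2s} \leq K_f(v,v') \leq \Lambda |v-v'|^{-d-2s}$. These are not available: by \eqref{e:boltzmann-kernel-approx}, $K_f(v,v') \approx |v'-v|^{-d-2s} \int_{w \perp v'-v} |w|^{\gamma+2s+1} f(v+w) \dd w$, and under only the hydrodynamic bounds \eqref{e:mass_density_assumption}--\eqref{e:entropy_density_assumption} this hyperplane integral can be neither bounded below nor above pointwise (an $L^1$ function of finite energy can vanish on, or concentrate near, a hyperplane). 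The H\"older estimate has to be proved under strictly weaker ellipticity: a lower bound only on a thick cone of directions (the measure-theoretic nondegeneracy of Theorem \ref{t:kinetic-DG-integral}), an upper bound only in the averaged sense $\int_{B_r(v)} K |v-v'|^2 \dd v' \leq \Lambda r^{2-2s}$, plus the cancellation conditions. This generalized notion of ellipticity is precisely where the difficulty lies, and your outline bypasses it; note that even the cleaner question of a kinetic H\"older estimate for symmetric kernels with pointwise two-sided bounds (without the cancellation structure) is stated in this paper as an open problem, Conjecture \ref{c:kinetic-krylov-integral}.

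The second gap is your treatment of large velocities. You claim the decay from step (ii) ``confines the problem to regions where $\mathcal L_f$ is uniformly elliptic.'' It does not: the ellipticity parameters of $K_f$ degenerate as $|v| \to \infty$ regardless of how fast $f$ decays (decay makes the kernel smaller, not uniformly elliptic), and the local estimates of Theorems \ref{t:kinetic-DG-integral} and \ref{t:schauder} do not upgrade themselves to global ones. The missing ingredient is the change of variables $\mathcal T_{z_0}$ of Proposition \ref{p:change_of_variables}, which maps a kinetic cylinder around a far point $(t_0,x_0,v_0)$ onto $Q_1$ and renormalizes the kernel so that its nondegeneracy, averaged upper bound and cancellation constants become independent of $v_0$; only after this step do the local H\"older and Schauder estimates yield global bounds with the correct asymptotics as $|v|\to\infty$, and the decay of derivative estimates then comes from this weighted Schauder iteration (it deteriorates with each derivative, which is why rapid decay of $f_0$ for all $q$ is needed when $\gamma\le 0$), not from ``repeating the weighted barrier argument on derivatives''---derivatives change sign and are not amenable to a comparison-principle barrier. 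A smaller deviation: the paper obtains the $L^\infty$ bound and the $\langle v\rangle^{-q}$ decay in one stroke by a nonlocal barrier/first-contact-point argument (Theorem \ref{t:upper-bounds}), rather than by a level-set De Giorgi iteration followed by barriers; your variant is not obviously wrong, but it is not the established route and would need the same generalized ellipticity discussed above to run.
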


The proof of theorem \ref{t:conditional-regularity_boltzmann} contains several ingredients that originated in the study of nonlocal parabolic equations. We describe some of the key ingredients in this manuscript. A more detailed description of the proof, still in survey form, is given in \cite{imbert-silvestre-survey2020}.

There is a parallel result for the inhomogeneous Landau equation that is proved by Chris Henderson and Stan Snelson in \cite{hst-smoothing-landau}. We state it here.

\begin{thm} \label{t:conditional-regularity_landau}
Let $f$ be a (possibly weak) solution to the inhomogeneous Landau equation \eqref{e:boltzmann} where the collision operator $Q$ has the form \eqref{e:landau_expression1} and \eqref{e:landau} with $\gamma \in (-2,0)$.

Assume that there are constants $m_0>0$, $M_0$, $E_0$ and $H_0$ such that the hydrodynamic bounds \eqref{e:mass_density_assumption}, \eqref{e:energy_density_assumption} and \eqref{e:entropy_density_assumption} hold. There is a $\mu>0$, depending on $m_0$, $M_0$, $E_0$, $H_0$ and $\gamma$, so that if
\[ f_0(x,v) \leq C e^{-\mu |v|^2}\]

then, for any $\tau>0$, $\mu' \in (0,\mu)$, all derivatives $D^\alpha f$, for any multi-index $\alpha$, are bounded in $[\tau,\infty) \times \R^d \times \R^d$ an decay as $|v| \to \infty$. More precisely, there are constants $C_{\tau, \mu', \alpha,q}$ so that
\[ e^{\mu' |v|^2} |D^\alpha f(t,x,v)| \leq C_{\tau,\mu',\alpha,q} \qquad \text{for all } t>\tau, x\in \R^d, v \in \R^d.\]
\end{thm}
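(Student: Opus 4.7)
The plan is to follow the same four-stage template used for Theorem \ref{t:conditional-regularity_boltzmann}, but with local kinetic parabolic tools replacing the integro-differential ones, since the Landau operator is a local second-order diffusion plus a lower-order term. The stages are: (i) coefficient control, (ii) Gaussian upper bound propagation, (iii) kinetic H\"older regularity via De Giorgi--Nash--Moser, (iv) Schauder bootstrap. For step (i), one uses a Desvillettes-style ``spreading'' argument: from \eqref{e:mass_density_assumption}--\eqref{e:entropy_density_assumption} alone, the mass lower bound together with the entropy upper bound prevents $f(t,x,\cdot)$ from concentrating in any direction, and one obtains constants $\lambda,\Lambda>0$ depending only on $m_0,M_0,E_0,H_0,\gamma$ such that
\[
\lambda\vv^{\gamma}|\xi|^2 \leq \bar a_{ij}(t,x,v)\xi_i\xi_j \leq \Lambda\vv^{\gamma+2}|\xi|^2, \qquad |\bar c(t,x,v)| \leq \Lambda\vv^{\gamma}.
\]

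For step (ii), one propagates a Gaussian upper bound by constructing a barrier $\Phi(t,v) = Ae^{-\mu(t)|v|^2}$, with $\mu(t)$ slowly decreasing from $\mu$ toward $\mu'$. A direct computation shows $(\partial_t + v\cdot\nabla_x)\Phi - \bar a_{ij}\partial_{ij}\Phi - \bar c\,\Phi \geq 0$ once $-\dot\mu(t)$ is chosen to dominate both the $\vv^{\gamma+2}$ growth of $\bar a_{ij}$ and the $\vv^{\gamma}$ contribution from $\bar c$. The comparison principle (available because the operator is local) then gives $f \leq \Phi$. This is where the hypothesis $\gamma>-2$ is crucial: the top eigenvalue $\vv^{\gamma+2}$ must be strictly subquadratic in $v$, otherwise the $|v|^2$ produced by second differentiation of the Gaussian cannot be absorbed by any admissible $\mu(t)$.

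For steps (iii) and (iv), once $f$ is pointwise bounded the equation reads as a linear degenerate parabolic PDE in kinetic form with bounded measurable coefficients, uniformly elliptic inside kinetic cylinders whose $v$-radius scales like $\vv^{-\gamma/2}$. The De Giorgi--Nash--Moser theory of Golse--Imbert--Mouhot--Vasseur (or the kinetic weak Harnack inequality of Guerand--Mouhot) then yields a H\"older estimate in the kinetic distance. Convolution against $|\cdot|^\gamma$ of a H\"older, Gaussian-decaying function stays H\"older, so $\bar a_{ij}$ and $\bar c$ are themselves $C^\alpha$, and the kinetic Schauder theory (Imbert--Mouhot, Henderson--Snelson--Tarfulea) upgrades $f$ to $C^{2,\alpha}$. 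Differentiating \eqref{e:boltzmann} in $x$ and $v$, each derivative satisfies a similar equation; one iterates Schauder and re-runs the Gaussian propagation on every derivative, reaching the bound claimed for every multi-index $\alpha$.

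The main obstacle is the Gaussian propagation in step (ii): $\mu(t)$ must shrink fast enough to produce a supersolution yet remain bounded away from zero on each finite interval, and the construction genuinely breaks down at $\gamma = -2$ --- which is exactly why the theorem excludes that endpoint and why the Landau--Coulomb case ($\gamma=-3$) is out of reach by this method. A secondary technical issue is that the ellipticity constant $\lambda\vv^{\gamma}$ degenerates as $|v|\to\infty$, so every H\"older and Schauder estimate must be performed in cylinders of $v$-radius $\vv^{-\gamma/2}$ respecting the kinetic scaling, and the scaled local estimates must be patched into a global weighted pointwise bound via a covering argument.
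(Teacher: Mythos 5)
Your four-stage skeleton (coefficient bounds, pointwise decay, kinetic De Giorgi--Nash--Moser H\"older estimate, Schauder bootstrap, plus a change of variables to handle the degeneracy as $|v|\to\infty$) is the same route the paper indicates: the theorem is Henderson--Snelson's \cite{hst-smoothing-landau}, which takes the pointwise upper bounds of \cite{css-upperbounds-landau} as its first ingredient, the H\"older/Harnack estimate of \cite{gimv-harnacklandau} as its second, and then iterates kinetic Schauder estimates. The genuine gap is in your first two steps. The claim $|\bar c(t,x,v)| \le \Lambda \langle v\rangle^{\gamma}$ does \emph{not} follow from \eqref{e:mass_density_assumption}--\eqref{e:entropy_density_assumption}: $\bar c = c\, f \ast |\cdot|^{\gamma}$ has a kernel singular at the origin for $\gamma\in(-2,0)$, and bounded mass, energy and entropy give no pointwise control of this convolution (put mass $m_h = 1/\log(1/h)$ uniformly on a ball of radius $h$: the entropy stays bounded while $f\ast|\cdot|^{\gamma} \gtrsim h^{\gamma}/\log(1/h) \to \infty$ as $h\to0$). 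Hence the comparison argument of your step (ii), which needs pointwise control of the zeroth-order coefficient to verify that $Ae^{-\mu(t)|v|^2}$ is a supersolution, is circular: bounding $\bar c$ requires an $L^p$ (in practice $L^\infty$) bound on $f$, which is exactly what this stage must produce. Even if you bound $\bar c$ at a first-crossing point by the barrier itself, the reaction term is quadratic in $f$, so a pure comparison argument can at best give a bound blowing up in finite time, not the time-uniform bound the theorem asserts. In \cite{css-upperbounds-landau} the $L^\infty$ bound is the hard step and is obtained by a De Giorgi-type iteration in kinetic cylinders (after the change of variables), in which the term $\bar c f$ is absorbed by the coercivity of the diffusion; it is there, together with the upper bound $\bar a_{ij}\xi_i\xi_j \lesssim \langle v\rangle^{\gamma+2}|\xi|^2$ (itself only valid for $\gamma \ge -2$), that the moderately soft restriction enters. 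The Gaussian barriers come only afterwards, to upgrade boundedness to decay.

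Relatedly, your diagnosis of where $\gamma>-2$ is crucial is off: subquadraticity of $\langle v\rangle^{\gamma+2}$ holds for every $\gamma<0$, so it cannot be what singles out $\gamma>-2$; the obstruction at $\gamma\le -2$ is that neither $\bar a_{ij}$ from above nor $\bar c$ is controlled by the hydrodynamic quantities, which is the same reason the very soft potential and Landau--Coulomb cases remain open even in the space-homogeneous setting. Once the $L^\infty$ and Gaussian bounds are in place, your steps (iii)--(iv) do match the cited proof, with two caveats: the theorem concerns possibly weak solutions, so the H\"older and barrier arguments must be run at the level of weak (sub)solutions; and since the eigenvalues of $\bar a_{ij}$ range between $\langle v\rangle^{\gamma}$ (radial) and $\langle v\rangle^{\gamma+2}$ (tangential), no choice of isotropic cylinder radius such as $\langle v\rangle^{-\gamma/2}$ makes the equation uniformly elliptic --- one needs the anisotropic change of variables of \cite{css-upperbounds-landau}, which flattens the radial direction by $1/|v_0|$, to obtain the uniform estimates and the stated Gaussian-weighted asymptotics.
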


The proof of Theorem \ref{t:conditional-regularity_landau} contains similar steps as the proof of Theorem \ref{t:conditional-regularity_boltzmann}, except that instead of regularity techniques for nonlocal equations, it involves more classical second order diffusion. It uses the upper bounds from \cite{css-upperbounds-landau} instead of the ones from \cite{silvestre-boltzmann2016,imbert-mouhot-silvestre-decay2020}, and the Harnack inequality from \cite{gimv-harnacklandau} instead of \cite{imbert-silvestre-whi2020}.

The regularity estimates from Theorems \ref{t:conditional-regularity_boltzmann} and \ref{t:conditional-regularity_landau} stay uniform as $t \to \infty$, or for as long as \eqref{e:mass_density_assumption}, \eqref{e:energy_density_assumption} and \eqref{e:entropy_density_assumption} hold. This is important. For example, the celebrated convergence to equilibrium result by Desvillettes and Villani in \cite{desvillettes2005global} applies to solutions provided that they are uniformly $C^\infty$ as $t \to \infty$. However, if we only seek to construct a global smooth solution, it would suffice to have regularity estimates that are allowed to deteriorate for large time. In that case, it is shown in \cite{hst-continuation-landau,hst-continuation_boltzmann} that the upper bounds on mass density and energy density suffice. We summarize both results in the following statement.

\begin{thm} \label{t:continuation}
Let $f$ be a solution to \eqref{e:boltzmann}, periodic in space, where the collision operator $Q$ is either the non-cutoff Boltzmann collision operator \eqref{e:bco} with $B$ as in \eqref{e:non-cutoff} with $s \in (0,1)$ and $\gamma+2s \in [0,2]$, or the Landau operator \eqref{e:landau_expression1} and \eqref{e:landau} with $\gamma \in (-2,0)$.

Assume that there are upper bounds for the mass and energy density. That means that there is a constant $N_0$ so that for all $t,x$,
\begin{align*}
\int f(t,x,v) (1+|v|^2) \dd v &\leq N_0.
\end{align*}

Assume that $f_0$ is continuous and periodic in $x$. It does not need to be strictly positive. There might be values of $x$ so that $f_0(x,\cdot)$ vanishes.

In the case that $\gamma \leq 0$, for the Boltzmann equation, we also assume that $f_0(x,v) \leq C_q \langle v \rangle^{-q}$ for all $q>0$.

In the case of the Landau equation, we also assume $f_0(x,v) \leq C e^{-\mu |v|^2}$.

Then, for all derivatives $D^\alpha f$, for any multi-index $\alpha$, there exist upper bounds of the form
\[ |D^\alpha f(t,x,v)| \leq \begin{cases}
C_{\alpha,q,f_0}(t) \vv^{-q} & \text{for any } q \geq 0, \text{ for Boltzmann,} \\
C_{\alpha,\mu',f_0}(t) e^{-\mu' |v|^2} & \text{for any } \mu' \in [0,\mu), \text{ for Landau}. \\
\end{cases}
\]
\end{thm}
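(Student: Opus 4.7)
The plan is to reduce Theorem \ref{t:continuation} to the conditional regularity results, Theorem \ref{t:conditional-regularity_boltzmann} and Theorem \ref{t:conditional-regularity_landau}, by establishing their missing hypotheses with constants that are allowed to depend on $t$ and on $f_0$. The weaker hypotheses here only control the mass and energy densities, so three things must be recovered: (i) a pointwise upper bound on $f(t,\cdot,\cdot)$ with the prescribed decay in $v$; (ii) an upper bound on the entropy density; and (iii) a strictly positive lower bound on the mass density at each $(t,x)$ with $t>0$. Once these are available on every compact time window $[\tau,T]$, invoking Theorem \ref{t:conditional-regularity_boltzmann} or \ref{t:conditional-regularity_landau} with effective constants $m_0=m_0(\tau)$, $M_0$, $E_0=N_0$, $H_0=H_0(T)$ yields the derivative estimates with the appropriate $v$-decay.

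For (i), I would propagate the pointwise decay of $f_0$ by a barrier argument adapted to the structure of the collision operator. For the non-cutoff Boltzmann case with $\gamma+2s\in[0,2]$, the collision operator decomposes into a quasilinear integro-differential diffusion of order $2s$ plus a lower-order reaction term whose magnitude is controlled by moments of $f$, hence by $N_0$. The comparison function $A(t)\vv^{-q}$ can be verified to be a supersolution provided $A(t)$ grows sufficiently fast in $t$, since in the large-$|v|$ regime the lower-order term is dissipative and the diffusion carries the correct sign. A parallel argument with Gaussian barriers handles the Landau case for $\gamma\in(-2,0)$, essentially the content of \cite{css-upperbounds-landau}.

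With the pointwise upper bound $f(t,x,v) \leq A(t)\vv^{-q}$ (or its Gaussian analogue) in hand, step (ii) is straightforward: one splits
\[ \int_{\R^d} f \log f \dd v \leq \log^+ A(t) \cdot M_0 + \int_{\{f \leq 1\}} |f \log f| \dd v, \]
and controls the second integral by mass and energy via standard weighted interpolation. For (iii), I would exploit the spreading effect of the equation. Although $f_0$ is allowed to vanish, we may assume $f_0 \not\equiv 0$ (otherwise the conclusion is immediate), so $f_0 > 0$ on some open set $U$. Iterating a quantitative weak Harnack inequality along kinetic cylinders — from \cite{imbert-silvestre-whi2020} for Boltzmann, from \cite{gimv-harnacklandau} for Landau — propagates a positive lower bound for $f(t,x,v)$ across the spatial torus and into the relevant range of $v$, producing $\inf_{x} \int f(\tau,x,v)\dd v \geq m_0(\tau) > 0$ for every $\tau$ larger than an explicit time depending only on $U$ and on the a priori bounds.

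The main obstacle is step (i): without any regularity of $f$ and without even a lower bound on mass density, one must rely purely on moment-type quantities to dominate the collision operator, and the barrier must be sharp enough to see the cancellation between the diffusion and the lower-order term at large $|v|$. This is precisely the content of the upper-bound machinery in \cite{silvestre-boltzmann2016, imbert-mouhot-silvestre-decay2020, css-upperbounds-landau} that underlies \cite{hst-continuation-landau, hst-continuation_boltzmann}; once it is in place, steps (ii), (iii), and the final application of the conditional regularity theorems are relatively routine.
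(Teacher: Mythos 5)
Your overall architecture---recover the missing hypotheses of Theorems \ref{t:conditional-regularity_boltzmann} and \ref{t:conditional-regularity_landau} with time-dependent constants and then apply them on compact time windows---is the same as the paper's, and your remark that an entropy density bound follows from a pointwise upper bound together with the mass bound is fine. The genuine gap is in how you produce the lower bound on the mass density (your step (iii)) and, relatedly, in the claim that the decay estimate of step (i) needs only $N_0$. The weak Harnack inequalities of \cite{imbert-silvestre-whi2020} and \cite{gimv-harnacklandau} cannot be invoked here: their nondegeneracy/ellipticity hypotheses for the Boltzmann kernel $K_f$ (the cone of nondegeneracy) and for the Landau coefficients $\bar a_{ij}$ are themselves derived from the hydrodynamic bounds \emph{including} the lower bound on the mass density and the upper bound on the entropy density---precisely the quantities Theorem \ref{t:continuation} does not assume. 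Using those inequalities to generate the mass lower bound is circular. The same issue undermines step (i): the pointwise decay machinery of \cite{imbert-mouhot-silvestre-decay2020} (Theorem \ref{t:upper-bounds}) and of \cite{css-upperbounds-landau} exploits the nondegeneracy from below of the diffusion to beat the reaction term, so it uses all four constants $m_0, M_0, E_0, H_0$, not just $N_0$; under the hypotheses of Theorem \ref{t:continuation} that nondegeneracy is not available a priori.

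The proof in \cite{hst-continuation-landau,hst-continuation_boltzmann}, as described in the paper, inverts your order. The first and key step is to propagate and expand a pointwise lower bound: since $f_0$ is continuous and (without loss of generality) not identically zero, there is a ``clear ball'' $B_r(x_0,v_0)$ and $\delta>0$ with $f_0 \geq \delta$ there, and this lower bound is spread to the whole space and to positive times by a barrier-type maximum-principle argument that uses only the \emph{upper} bounds on mass and energy density---no Harnack inequality and no lower ellipticity are needed at this stage. Once this lower barrier is in hand, it substitutes for $m_0$ and $H_0$ in the nondegeneracy estimates for $K_f$ (resp. $\bar a_{ij}$), and the authors then observe that the proofs of Theorems \ref{t:conditional-regularity_boltzmann} and \ref{t:conditional-regularity_landau}, including the decay estimates, go through with the barrier from below in place of \eqref{e:entropy_density_assumption}, rather than verifying the entropy bound directly. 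Your proposal would be repaired by replacing the Harnack-based spreading with this barrier argument and running it \emph{before}, not after, the upper-bound and regularity steps.
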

Unlike the upper bounds in Theorems \ref{t:conditional-regularity_boltzmann} and \ref{t:conditional-regularity_landau}, the upper bounds in Theorem \ref{t:continuation} deteriorate as $t \to \infty$. They suffice to construct a global smooth solution, but they would not let us apply the result in \cite{desvillettes2005global} and deduce their relaxation to equilibrium.

Theorem \ref{t:continuation} builds upon Theorems \ref{t:conditional-regularity_boltzmann} and \ref{t:conditional-regularity_landau} by propagating a lower bound that is stronger than the first inequality in \eqref{e:mass_density_assumption}. Indeed, for any nonzero continuous initial data $f_0$, there will be a clear ball $B = B_r(x_0,v_0)$ and $\delta>0$ so that $f_0(x,v) \geq \delta$ whenever $(x,v) \in B$. The upper bounds on mass and energy densities turn out to suffice to propagate and expand this lower bound to the full space by a barrier-like argument. Thus, the lower bound in \eqref{e:mass_density_assumption} holds for positive time. The upper bound on the entropy \eqref{e:entropy_density_assumption} is not proved directly. Instead, the authors observe that the proofs of Theorems \ref{t:conditional-regularity_boltzmann} and \ref{t:conditional-regularity_landau} go through without \eqref{e:entropy_density_assumption} as soon as we have a barrier from below. The right hand side of the regularity estimates in Theorem \ref{t:continuation} depend on the initial data in terms of this clear ball.

\section{Integro-differential diffusion inside the Boltzmann collision operator}
\label{s:ide}

The analysis of equations involving integro-differential diffusion has been an intensely researched area since the beginning of the 21st century. A linear \emph{elliptic} integro-differential operator has the general form
\begin{equation} \label{e:id-op}
Lf(v) = \int (f(v')- f(v)) K(v,v') \dd v',
\end{equation}
for some nonnegative kernel $K$. The typical linear parabolic integro-differential equation would be $f_t = Lf$, for an nonlocal operator $L$ as above.

The operator $L$ in \eqref{e:id-op} is the nonlocal analog of more classical second order elliptic operators. In the case the $K(v,v') = |v-v'|^{-d-2s}$, it is a multiple of the fractional Laplacian: $Lf(v) = -c (-\Delta)^sf(v)$. The operator $L$ would be considered elliptic of order $2s$ if its kernel is comparable with the kernel of the fractional Laplacian $|v-v'|^{-d-2s}$. However, this concept is more subtle than it may seem initially. There are several different ellipticity conditions in the literature depending on different interpretations of the word \emph{comparable}.

There are two types of second order elliptic operators: divergence-form operators $Lf = \partial_i a_{ij}(v) \partial_j f$, and nondivergence-form operators $Lf = a_{ij}(v) \partial_{ij} f$. In both cases, the uniform ellipticity condition consists in requiring the coefficients $a_{ij}(v)$ to be strictly positive definite at every point $v$, with some specific upper and lower bounds $\lambda I \leq \{a_{ij}\} \leq \Lambda I$. In the nonlocal setting \eqref{e:id-op}, the divergence or nondivergence structure of the operator will be reflected in different symmetry conditions on the kernel $K$. A possible uniform ellipticity condition would be to require $\lambda |v-v'|^{-d-2s} \leq K(v,v') \leq \Lambda |v-v'|^{-d-2s}$. It is a common condition in the literature of nonlocal equations. However, the richness of the class of integral kernels gives us a lot of flexibility, and more general notions of ellipticity exist.

Most of the central regularity results for elliptic and parabolic equations have a nonlocal counterpart. In particular, there are nonlocal versions of De Giorgi-Nash-Moser theorem \cite{komatsu1995, barlow2009non,basslevin2002,caffarelli2010drift,kassmann2009priori,Felsinger2013,chan2011,kassmann2013regularity}, Krylov-Safonov theorem \cite{basslevin2002,bass2002harnack,song2004,bass2005holder,bass2005harnack,%
silvestre2006holder,caffarelli2009regularity,silvestre2011differentiability,davila2012nonsymmetric,davila2014parabolic,kassmann2013intrinsic,bjorland2012,rang2013h,schwab2016}, and Schauder estimates \cite{MP,tj2015,serra2015,imbert2016schauder,tj2018}.

In order to apply methods that originate in the study of elliptic equations in divergence form (like in the theorem of De Giorgi, Nash and Moser), we would need the kernel $K$ to satisfy a symmetry condition that allows us to apply variational techniques. The usual elliptic operators in divergence form $f \mapsto \partial_i a_{ij} \partial_j f$ are self-adjoint. Analogously, the integro-differential operator $L$ above is self-adjoint when $K(v,v') = K(v',v)$.

Conversely, in order to apply methods that originate in the study of elliptic equations in non-divergence form (like in the theorem of Krylov and Safonov), we need different symmetry assumptions on the kernel $K$. The key property of non-divergence operators like $f \mapsto a_{ij} \partial_{ij} f$ is that for any smooth function $f$ the values of the operator make sense pointwise. This is always true for integro-differntial operators when $s<1/2$. If $s \geq 1/2$, we would need the symmetry condition $K(v,v+w) = K(v,v-w)$. The type of techniques that one can apply to integro-differential equations vary according to the symmetry conditions on the kernel.

The Boltzmann collision operator \eqref{e:bco} involves an integral of differences. However, it is not immediately clear how to relate it with results about nonlocal operators of the form \eqref{e:id-op}. With that in mind, let us analyze the expression for $Q(f,f)$ in \eqref{e:bco} and derive an equivalent formulation.

We add an subtract a term $f'_\ast f$ inside the integrand.
\begin{align*}
Q(f,f)(v) &= \int_{\R^d} \int_{S^{d-1}} (f'_\ast f' - f'_\ast f + f'_\ast f - f_\ast f) \, B \dd \sigma \dd v_\ast, \\
&= \int_{\R^d} \int_{S^{d-1}} (f' - f) \, [f'_\ast B] \dd \sigma \dd v_\ast + f(v) \int_{\R^d} \int_{S^{d-1}} (f'_\ast - f_\ast) B \dd \sigma \dd v_\ast
\end{align*}
For the first term, we use a change of variables known as \emph{Carleman coordinates}. For the second term, we refer to the \emph{cancellation lemma}.

The cancellation lemma appeared first in \cite{alexandre_entropy_dissipation2000} (see also \cite{silvestre-boltzmann2016}). It reduces the integral in the second term to a convolution.
\begin{lemma}[Cancellation lemma]
There is a nonnegative function $b : \R^d \to [0,\infty)$, depending only on the collision kernel $B$ on \eqref{e:bco}, so that
\[ \int_{\R^d} \int_{S^{d-1}} (f'_\ast - f_\ast) B \dd \sigma \dd v_\ast = [f \ast b](v)\]
In particular, if $B$ has the standard non-cutoff form \eqref{e:non-cutoff}, then $b(v) \approx |v|^\gamma$.
\end{lemma}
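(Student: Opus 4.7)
My plan is to extract a convolution structure by changing variables $v_\ast \mapsto v'_\ast$ (with $v$ and $\sigma$ held fixed) in the first of the two pieces of $\int(f'_\ast - f_\ast)B\,d\sigma\,dv_\ast$. A direct differentiation of the map $v_\ast \mapsto v'_\ast = \frac{v+v_\ast}{2} - \frac{|v-v_\ast|}{2}\sigma$ gives the Jacobian determinant $|\det J| = 2^{-d+1}\cos^2(\theta/2)$, where $\theta$ is the scattering angle. The post-collisional geometry also yields two identities that are essential for reading off the transformed integrand: $|v - v'_\ast| = |v - v_\ast|\cos(\theta/2)$, and the angle between $\sigma$ and $v - v'_\ast$ equals $\theta/2$. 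These let me re-express $B(|v-v_\ast|,\cos\theta)$ in the new variable $u = v'_\ast$ using the new scattering angle $\phi = \theta/2$.

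After substituting and relabelling $u$ back to $v_\ast$, the first piece becomes $\int f(v_\ast)\widetilde B\,d\sigma\,dv_\ast$, with $\widetilde B := B/|\det J|$ evaluated at the transformed arguments. A subtlety is that $v_\ast \mapsto v'_\ast$ is surjective only onto the half-space $(v-v_\ast)\cdot\sigma \ge 0$, so after swapping the order of integration, the $\sigma$-domain for the transformed piece is the half-sphere $\phi \in [0,\pi/2]$ rather than the full sphere. Subtracting the untransformed piece gives
\[
\int(f'_\ast - f_\ast)B\,d\sigma\,dv_\ast = \int_{\R^d} f(v_\ast)\left[\int_{\text{half-sphere}}\widetilde B\,d\sigma - \int_{S^{d-1}} B\,d\sigma\right] dv_\ast.
\]
By rotational invariance, the bracketed quantity depends on $v-v_\ast$ only through its length, so it defines a radial function $b(v-v_\ast)$, and the identity above is the convolution $(f\ast b)(v)$.

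For the standard non-cutoff kernel $B(r,\cos\theta) = r^\gamma|\sin(\theta/2)|^{-d+1-2s}$, I would plug the formulas in and use $\sin\theta = 2\sin(\theta/2)\cos(\theta/2)$ to factor $|v-v_\ast|^\gamma$ out of both $\widetilde B$ and $B$. The remainder is a purely angular integral which, after the delicate cancellation discussed below, evaluates to a finite positive constant depending only on $d$, $s$, and $\gamma$. This yields $b(w)\approx |w|^\gamma$, and the explicit sign of the angular constant delivers the asserted nonnegativity of $b$.

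The main technical obstacle is that neither $\int_{\text{half-sphere}}\widetilde B\,d\sigma$ nor $\int_{S^{d-1}} B\,d\sigma$ is individually finite for $s>0$: both behave like $\theta^{-1-2s}$ (equivalently $\phi^{-1-2s}$) once the spherical measure $\sin^{d-2}\theta\,d\theta$ is taken into account. The cancellation is real but requires care, because the natural cutoff $\theta > \varepsilon$ in the original parametrization is pushed forward by the change of variables to the shifted cutoff $\phi > \varepsilon/2$. Expanding carefully, both divergent contributions carry leading term $\frac{|S^{d-2}|}{2s}\,2^{d-1+2s}\,|v-v_\ast|^\gamma\,\varepsilon^{-2s}$ and therefore cancel exactly. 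Passing to the limit $\varepsilon \to 0$ yields the well-defined convolution kernel $b$ with the asserted asymptotics.
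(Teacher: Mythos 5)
Your route is essentially the classical Alexandre--Desvillettes--Villani--Wennberg argument that the paper cites for this lemma, and most of it checks out: the Jacobian $2^{-d+1}\cos^2(\theta/2)$, the identities $|v-v'_\ast|=|v-v_\ast|\cos(\theta/2)$ and $\mathrm{angle}(\sigma,v-v'_\ast)=\theta/2$, and the matched-cutoff cancellation at the grazing end are all correct (the two divergent coefficients $\tfrac{|S^{d-2}|}{2s}2^{d-1+2s}|v-v_\ast|^\gamma\eps^{-2s}$ do coincide). The genuine gap is at the \emph{other} end of the angular domain. You keep the full sphere $\theta\in[0,\pi]$ for the untransformed piece, so the transformed piece lives on $\phi=\theta/2\in[0,\pi/2]$; near $\phi=\pi/2$ your $\widetilde B\,\dd\sigma$ behaves like $|v-v_\ast|^\gamma\cos^{-(2+\gamma)}\phi\,\dd\phi$ (the factor $\cos^{-2}\phi$ from the Jacobian times $\cos^{-\gamma}\phi$ from evaluating $B$ at relative speed $|v-v_\ast|/\cos\phi$, while $\sin(\theta/2)\to1$). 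This is not integrable at $\phi=\pi/2$ whenever $\gamma\ge-1$, and nothing cancels it: the untransformed angular integral is finite near $\theta=\pi$. So your bracketed ``radial function'' is $+\infty$ exactly in the range most relevant to the paper ($\gamma+2s\in[0,2]$ includes all hard potentials), and the claim that the angular remainder ``evaluates to a finite positive constant'' fails. The divergence is real, not an artifact of splitting: in the original variables it is the region $\theta\to\pi$, $|v_\ast|\to\infty$, where $v'_\ast$ stays near $v$; a dyadic shell $|v_\ast-v|\sim R$ contributes about $R^{\gamma}\cdot R^{-(d-1)}\cdot R^{d}=R^{\gamma+1}$, so with the full-sphere convention even the left-hand side of the lemma is $+\infty$ for suitable $f$ when $\gamma\ge-1$.

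The missing ingredient is the standard reduction, built into the cited proof, to collision kernels supported on deviation angles $\theta\in[0,\pi/2]$: since $f'f'_\ast$ is invariant under $\sigma\mapsto-\sigma$ (which exchanges $v'$ and $v'_\ast$), one may replace $B(r,\cos\theta)$ by $\bigl(B(r,\cos\theta)+B(r,-\cos\theta)\bigr)\one_{\{\cos\theta\ge0\}}$ at the level of $Q(f,f)$, \emph{before} performing the splitting that produces $f\int(f'_\ast-f_\ast)B$ (the integrand $f'_\ast-f_\ast$ alone is not symmetric, so the symmetrization cannot be done afterwards). With that convention your change of variables sends $\theta\in[0,\pi/2]$ to $\phi\in[0,\pi/4]$, where $\cos\phi\ge1/\sqrt2$, the endpoint problem disappears, and the rest of your argument goes through; for $B=r^\gamma|\sin(\theta/2)|^{-d+1-2s}$ one obtains $b(z)=c\,|z|^\gamma$ with
\[
c=|S^{d-2}|\int_0^{\pi/2}\sin^{d-2}\theta\,\bigl(\sin(\theta/2)\bigr)^{-d+1-2s}\left(\cos^{-\gamma-d}(\theta/2)-1\right)\dd\theta,
\]
which is finite because the last bracket is $O(\theta^2)$, and nonnegative precisely because $\gamma\ge-d$ makes $\cos^{-\gamma-d}(\theta/2)\ge1$. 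That computation, rather than an unspecified ``explicit sign of the angular constant,'' is what delivers the asserted nonnegativity of $b$.
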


For the first term, we want to change the variables of integration from $v_\ast \in \R^d$ and $\sigma \in S^{d-1}$ to $v' \in \R^d$ and $w \in \langle v'-v \rangle^\perp$, so that $v'_\ast = v+w$ and $v_\ast = v'+w$. With these new variables, it can be checked (see \cite{silvestre-boltzmann2016}) that
\[ \dd \sigma \dd v_\ast = \frac{2^{2d-1}}{|v'-v| |v-v_\ast|^{d-2}}\dd w \dd v'. \]
Thus, the first term becomes
\[ \int_{\R^d} \int_{S^{d-1}} (f' - f) \, [f'_\ast B] \dd \sigma \dd v_\ast = \int_{\R^d} (f'-f) K_f(v,v') \dd v', \]
where
\begin{equation}\label{e:boltzmann-kernel}
K_f(v,v') = \frac{2^{2d-1}}{|v'-v|} \int_{w \perp v'-v}  |v-v_\ast|^{-d+2} f(v'_\ast) B(|v-v_\ast|,\cos \theta) \dd w.
\end{equation}

For a standard non-cutoff kernel $B$ satisfying \eqref{e:non-cutoff}, we obtain
\begin{equation}\label{e:boltzmann-kernel-approx}
K_f(v,v') \approx |v'-v|^{-d-2s} \int_{w \perp v'-v}  |w|^{\gamma+2s+1} f(v+w) \dd w.
\end{equation}
When this last integral factor is bounded below and above, we recover the most classical notion of classical ellipticity $K_f(v,'v) = |v-v'|^{-d-2s}$. However, the integral is on a hyperplane. If $f$ is a function in $L^1$, with finite second moment, it is not possible to bound this integral pointwise either from below or from above. For that reason, we must explore more general notions of ellipticity.

Let us suppose that all we know about the function $f : \R^d \to [0,\infty)$ is that it has finite mass (integral), energy (second moment) and entropy. Moreover, its mass is bounded below. We state these assumptions precisely below.
\begin{align*}
0 < m_0 \leq \int f(v) \dd v \leq M_0, \\
\int f(v) |v|^2 \leq E_0, \\
\int f(v) \log f(v) \dd v \leq H_0.
\end{align*}
Based on these four constants only, we can deduce the following nondegeneracy conditions on the kernel $K_f$ (see \cite{imbert-silvestre-survey2020,silvestre-boltzmann2016,imbert-silvestre-whi2020,imbert2019global}).
\begin{description}
\item[Symmetry in the \emph{nondivergence} way] This is automatic from the formula \eqref{e:boltzmann-kernel}.
\[ K(v,v+w) = K(v,v-w).\]
\item[Lower bound on a cone of directions] There is a constant $\lambda>0$ depending on $m_0$, $M_0$, $E_0$ and $H_0$ such that
\[K(v,v') \geq \lambda  |v'-v|^{-d-2s}, \]
when $v'$ belongs to a cone of directions emanating from $v$. This cone is \emph{thick} in the sense that the measure of its intersection with the unit sphere centered at $v$ has a lower bound depending also on $m_0$, $M_0$, $E_0$ and $H_0$.
\item[Upper bound on average] There is a constant $\Lambda$ depending only on $M_0$ and $E_0$ such that
\[ \int_{B_{r}(v)} K_f(v,v') |v-v'|^2 \dd v' \leq \Lambda r^{2-2s}.\]
\item[Cancellation] While the exact \emph{divergence-form} symmetry does not hold. We do have some symmetry in the form of cancellation estimates
\[ \left\vert \int_{B_r(v)} K_f(v,v') - K_f(v',v) \dd v' \right\vert \leq \Lambda r^{-2s}.\]
If $s \geq 1/2$, we also have
\[ \left\vert \int_{B_r(v)} (v-v') (K_f(v,v') - K_f(v',v)) \dd v' \right\vert \leq \Lambda r^{1-2s}.\]
The first of these inequalities is a rewriting of the classical cancellation lemma from \cite{alexandre_entropy_dissipation2000} in terms of the kernel $K_f$.
\end{description}

These are the estimates that we get for $K_f$ based on minimal physically meaningful assumptions on $f$. Interestingly, much of the theory of elliptic and parabolic integro-differential equations can be recovered for equations involving kernels that satisfy these assumptions only. In that sense, the lower bound on the cone of nondegeneracy, and the upper bound on average, described above, are some mild form of integro-differential uniform ellipticity of order $2s$.

The values of $\lambda$ and $\Lambda$, and the thickness of the cone of degeneracy, depend on $m_0$, $M_0$, $E_0$ and $H_0$, and also on $|v|$. Their values degenerate in certain precise way as $|v| \to \infty$.

A key property of divergence form elliptic operators that is central in the proof of the theorem of De Giorgi, Nash and Moser is their coercivity and boundedness with respect to the $H^1$ norm. In the case of integro-differential operators, we work with fractional order Sobolev spaces. To have a nonlocal analog of this important regularity theorem, we want the integro-differential operator $L$ to be bounded and coercive with respect to the $H^s$ norm. The following results tell us that the cone condition and the upper bound on average described above for the Boltzmann kernel $K_f$ are enough.

\begin{prop} \label{p:Hs_boundedness}
Assume that a nonnegative kernel $K: \R^d \times \R^d \to \R$ satisfies the following two conditions
\begin{itemize}
\item There is a constant $\Lambda$ such that
\[ \int_{B_{r}(v)} K(v,v') |v-v'|^2 \dd v' \leq \Lambda r^{2-2s}.\]
\item The cancellation condition holds
\[ \left\vert \int_{B_r(v)} K(v,v') - K(v',v) \dd v' \right\vert \leq \Lambda r^{-2s}.\]
If $s \geq 1/2$, we also have
\[ \left\vert \int_{B_r(v)} (v-v') (K(v,v') - K(v',v)) \dd v' \right\vert \leq \Lambda r^{1-2s}.\]
\end{itemize}
Then the operator $L$ defined in \eqref{e:id-op} is a well defined bounded linear operator from $H^s$ to $H^{-s}$.
\end{prop}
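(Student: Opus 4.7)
The plan is to prove the bilinear estimate $|\langle Lf, g\rangle| \leq C\|f\|_{H^s}\|g\|_{H^s}$ for all $f, g \in H^s(\R^d)$, which is equivalent to $L : H^s \to H^{-s}$ being bounded. Starting from
\[
\langle Lf, g\rangle = \iint (f(v')-f(v)) K(v,v') g(v) \dd v' \dd v,
\]
I would symmetrize by swapping the dummy variables in half of the integral to obtain the decomposition
\[
2\langle Lf, g\rangle = \iint (f(v')-f(v))\,K(v,v')\,(g(v)-g(v')) \dd v' \dd v + \iint (f(v')-f(v))\bigl(K(v,v')-K(v',v)\bigr) g(v') \dd v' \dd v,
\]
so the task splits into estimating a Dirichlet-form piece $\mathcal{A}$ and a commutator piece $\mathcal{B}$.

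For $\mathcal{A}$, I would further symmetrize (replacing $K(v,v')$ by $K_{\mathrm{sym}} = \tfrac12(K(v,v')+K(v',v))$) and then apply Cauchy-Schwarz, reducing the problem to the Gagliardo-type estimate
\[
\mathcal{E}(f) := \iint (f(v')-f(v))^2 K(v,v') \dd v \dd v' \leq C\|f\|_{H^s}^2.
\]
To prove $\mathcal{E}(f) \lesssim \|f\|_{H^s}^2$ I would dyadically decompose in $h = v'-v$. On the dyadic shell $|h| \sim 2^k$, the second-moment hypothesis gives $\int_{|h|\sim 2^k} K(v, v+h) \dd h \leq C\, 2^{-2ks}$ via $|h|^2 \gtrsim 2^{2k}$. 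For the long-range shells ($k \geq 0$), combining this with $(f(v+h)-f(v))^2 \leq 2(f(v+h)^2 + f(v)^2)$ and with the analogous tail bound for the swapped integration variable (obtained by combining the second-moment bound with the cancellation condition) produces a summable contribution controlled by $\|f\|_{L^2}^2$. For the short-range shells ($k < 0$), the Plancherel identity yields $\int_v (f(v+h)-f(v))^2 \dd v = \int 4\sin^2(h\cdot\xi/2)|\hat f(\xi)|^2 \dd\xi \lesssim |h|^{2s}\|f\|_{\dot H^s}^2$, which combined with the second-moment bound for $K$ (after interchanging the $v$ and $h$ integrations) produces a contribution controlled by $\|f\|_{\dot H^s}^2$.

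For the commutator piece $\mathcal{B}$, I would write $g(v') = g(v) + (g(v')-g(v))$ and split $\mathcal{B} = \mathcal{B}_1 + \mathcal{B}_2$. The term $\mathcal{B}_2$ containing $(g(v')-g(v))$ is handled by Cauchy-Schwarz together with the pointwise inequality $|K(v,v')-K(v',v)| \leq 2K_{\mathrm{sym}}(v,v')$, reducing it to $\mathcal{E}(f)^{1/2}\mathcal{E}(g)^{1/2}$. The remaining term $\mathcal{B}_1 = \iint(f(v')-f(v))(K(v,v')-K(v',v))g(v)\dd v' \dd v$ is where the cancellation hypotheses are essential: for each fixed $v$, a dyadic decomposition of the $v'$-integral into shells $|v'-v| \sim 2^k$ combined with the zeroth-order cancellation bound controls the inner integral when $s < 1/2$, while for $s \geq 1/2$ one must first remove the linear Taylor term $(v'-v)\cdot\nabla f(v)$ and apply the first-order cancellation $\bigl|\int_{B_r}(v-v')(K(v,v')-K(v',v))\dd v'\bigr| \leq \Lambda r^{1-2s}$; pairing the resulting pointwise bound with $g \in L^2 \subset H^s$ completes the estimate.

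The main obstacle I anticipate lies in the short-range portion of $\mathcal{E}(f)$, where the $v$-dependence of $K$ blocks a direct Fourier argument. One must either perform a Littlewood-Paley decomposition of $f$ and estimate each frequency piece by playing off the second-moment bound against frequency localization, or exploit a pointwise inequality of the form $|f(v+h)-f(v)| \lesssim |h|^s M_s f(v)$ with $\|M_s f\|_{L^2} \lesssim \|f\|_{H^s}$, so that the integration against $K(v,v+h)$ (whose second moment is controlled uniformly in $v$ on every ball) can be carried out without passing through a pointwise upper bound on $K$ that is not available.
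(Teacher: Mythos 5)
The paper itself does not prove Proposition \ref{p:Hs_boundedness}; it defers to \cite{imbert-silvestre-whi2020}, and your overall architecture (duality, symmetrization in $(v,v')$, a Dirichlet-form piece plus a piece carrying $K(v,v')-K(v',v)$ handled with the cancellation hypotheses) is the same general strategy. The decisive estimates, however, are missing or argued invalidly, and the main one is exactly the point you flag: the short-range part of $\mathcal{E}(f)=\iint(f(v')-f(v))^2K(v,v')\dd v'\dd v$. The step ``Plancherel in $v$ for fixed $h$, second-moment bound in $h$ for fixed $v$, then interchange'' is not an estimate: you hold two marginal bounds, and with no pointwise upper bound on $K$ they do not control the joint pairing, since $K$ may concentrate, for each $v$, precisely at those $h$ where $f(v+h)-f(v)$ is large. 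This is not a technicality: a kernel whose rows satisfy the second-moment bound but which sends, from each point of a fixed annulus, order-one mass onto a bump of width $\lambda$ (and $f$ equal to that bump) gives $\mathcal{E}(f)\gtrsim1$ while $\|f\|_{H^s}^2\approx\lambda^{d-2s}\to0$ when $d>2s$. What excludes this is the cancellation hypothesis, which on each dyadic annulus transfers the row bound to the columns $K(\cdot,v)$ and hence gives the column second-moment bound; so cancellation must enter the short-range Dirichlet estimate too, not only your ``swapped'' long-range tail. Of your two proposed repairs, the pointwise one does not close: it would require $\sup_v\int_{|h|\le1}|h|^{2s}K(v,v+h)\dd h<\infty$, but the hypotheses allow every dyadic shell $|h|\sim2^{-j}$ to contribute a constant of order $\Lambda$ (e.g.\ a translation-invariant kernel made of bumps of mass $\approx2^{2js}$ at distance $2^{-j}$ for all $j$ is admissible), so that integral can be infinite even though the desired bound holds; this route loses at least a logarithm. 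The Littlewood--Paley route does work, but the real content is what your outline omits: at kernel scale $2^{-j}$ a frequency-$2^i$ piece must be damped by $\min(1,2^{i-j})$ (via Peetre-type maximal functions on both the $v$ and $v+h$ sides, the latter needing the column bounds), followed by a Schur-type summation to recover $\|f\|_{H^s}$ rather than a lossy $B^s_{2,1}$ quantity.

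The treatment of $\mathcal{B}_1=\iint(f(v')-f(v))(K(v,v')-K(v',v))g(v)\dd v'\dd v$ also cannot work as described. The kernel $K(v,v')=2|v-v'|^{-d-2s}\one_{\{v_1'>v_1\}}$ satisfies all the hypotheses (the zeroth-order cancellation integral vanishes and the first-moment one is $\approx r^{1-2s}$), yet its antisymmetric part is an odd Fourier multiplier of order $2s$: it maps $H^s$ into $H^{-s}$ but not into $L^2$, so any argument that produces a pointwise (or $L^2_v$) bound on the inner integral from $\|f\|_{H^s}$ and then pairs with $\|g\|_{L^2}$ is impossible by scaling; the derivatives must be split between $f$ and $g$. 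Moreover, for $f$ merely in $H^s$ with $s<1$ you cannot subtract the Taylor term $(v'-v)\cdot\nabla f(v)$, and you cannot put absolute values inside the shells, since the cancellation hypotheses control only signed integrals of $K-K^T$; the estimate has to remain in bilinear form, e.g.\ subtracting averages or mollifications of $f$ at the shell scale, pairing the resulting constants with the $r^{-2s}$ and $r^{1-2s}$ cancellation bounds, and absorbing remainders into the symmetrized Dirichlet form through $|K-K^T|\le 2K_{\mathrm{sym}}$, as is done in \cite{imbert-silvestre-whi2020}. In short, the skeleton is the right one, but both key estimates (the $H^s$ bound for the quadratic form and the antisymmetric term) are left open or rest on steps that fail.
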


As we discussed, the kernel $K_f$ of the Boltzmann equation satisfies the assumption of Proposition \ref{p:Hs_boundedness}. Yet, Proposition \ref{p:Hs_boundedness} is a general statement of integro-differential operators. It applies to any kernel $K$, regardless of whether it was obtained from the Boltzmann collision operator, or from any other origin. A proof of Proposition \ref{p:Hs_boundedness} is given in \cite{imbert-silvestre-whi2020}.

A form of coercivity follows from the cone of nondegeneracy described above. It is a consequence of the following general result for integro-differential operators proved in \cite{chaker2020}

\begin{thm} \label{t:coercivity}
Let $K: B_2 \times B_2 \to \R$ be a nonnegative kernel. Assume that there exist two constants $\lambda>0$ and $\mu>0$ so that for any $v \in B_2$ and any ball $B \subset B_2$ that contains $v$, we have 
\[ |\{v' \in B : K(v,v') \geq \lambda |v-v'|^{-d-2s} \}| \geq \mu |B|. \]
Then, the following coercivity holds
\[ \iint_{B_2 \times B_2} |f(v') - f(v)|^2 K(v,v') \dd v' \dd v \geq c \lambda \iint_{B_1 \times B_1} |f(v') - f(v)|^2 |v-v'|^{-d-2s} \dd v' \dd v = \tilde c \lambda \|f\|_{\dot H^s}. \]
Here, the constants $c$ and $\tilde c$ depend on $\mu$, $s$ and dimension only.
\end{thm}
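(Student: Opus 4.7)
My plan is to dominate the standard fractional Dirichlet form of $f$ on $B_1$ by the $K$-Dirichlet form on $B_2$ via an averaging argument through well-chosen intermediate points. The starting tool is the triangle inequality $|f(v)-f(v')|^2 \leq 2|f(v)-f(z)|^2 + 2|f(v')-f(z)|^2$: if $z$ is drawn from a set where $K$ is known to be large, then averaging in $z$ converts the weight $|v-v'|^{-d-2s}$ into weights involving $K(v,z)$, which is precisely the direction we need.

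Concretely, I would fix $v, v' \in B_1$, set $r := |v-v'|$, and choose a ball $B \subset B_2$ of radius comparable to $r$ that contains both $v$ and $v'$ (when $v$ is close to $\partial B_2$ one instead uses $B = B_{cr}(v)$ for a small constant $c$ and adapts the geometry). The hypothesis then yields the good set
\[ G_v := \{z \in B : K(v,z) \geq \lambda |v-z|^{-d-2s}\}, \qquad |G_v| \geq \mu |B|.\]
For $z \in G_v$ one has $|v-z| \lesssim r$, hence $|v-v'|^{-d-2s} \lesssim |v-z|^{-d-2s} \leq \lambda^{-1} K(v,z)$. Combining this with the triangle inequality and averaging over $z \in G_v$,
\[ |f(v)-f(v')|^2\,|v-v'|^{-d-2s} \lesssim \frac{1}{\lambda \mu r^d} \int_{G_v} \bigl[|f(v)-f(z)|^2 + |f(v')-f(z)|^2\bigr] K(v,z) \dd z.\]
After integration over $(v,v') \in B_1 \times B_1$, the first contribution is of the correct form: fixing $(v, z)$, the admissible $v'$ satisfy $|v-v'| \sim |v-z|$ by the geometric constraint $z \in B$, and Fubini produces $\iint_{B_2 \times B_2} |f(v)-f(z)|^2 K(v,z)\dd v \dd z$ up to a constant depending on $\mu,s,d$.

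The hard part is the second, mismatched contribution $|f(v')-f(z)|^2 K(v,z)$, whose indices on $f$ and on $K$ do not line up. The natural fix is to apply the hypothesis also to $v'$ on the same ball $B$, producing $G_{v'}$ of measure $\geq \mu |B|$ with $K(v',z)\geq \lambda|v'-z|^{-d-2s}$. When $\mu > 1/2$ one can average directly over $G_v \cap G_{v'}$; for general $\mu > 0$ one must use a two-step chain $|f(v)-f(v')|^2 \leq 3|f(v)-f(z)|^2 + 3|f(z)-f(z')|^2 + 3|f(z')-f(v')|^2$ with $(z,z') \in G_v \times G_{v'}$, and control the middle term by reapplying the hypothesis to $z$ (or $z'$) at a comparable scale. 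The main obstacle is thus the bookkeeping for this middle term, combined with ensuring that the Fubini integration over $v'$ does not produce a logarithmic loss; this is most cleanly handled by a dyadic decomposition in the scale $r=|v-v'|$, which keeps the final constant depending only on $\mu$, $s$, and $d$ and gives exactly the stated inequality.
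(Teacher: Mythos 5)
The survey does not actually prove Theorem \ref{t:coercivity}; it states it and refers to \cite{chaker2020} for the proof. So your proposal has to stand on its own, and it does not: the step you yourself flag as ``the hard part'' is the entire content of the theorem, and your proposed fix does not close it. The intersection trick only covers $\mu>1/2$. For general small $\mu$, after the two-step chain through $(z,z')\in G_v\times G_{v'}$ the middle term $|f(z)-f(z')|^2$ carries no kernel weight, and ``reapplying the hypothesis to $z$ at a comparable scale'' merely reproduces a term of exactly the same type (a squared difference averaged over two subsets of $B$ of measure fraction $\mu$, with no $K$-weight linking the two sides), so the iteration never terminates and nothing is gained at the scale $r=|v-v'|$. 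Quantitatively, the best comparable-scale bound for the middle term is the mean oscillation $\mu^{-2}|B|^{-2}\iint_{B\times B}|f(z)-f(z')|^2\dd z\dd z'$; multiplying by $|v-v'|^{-d-2s}$, integrating in $(v,v')$ and summing the dyadic scales turns this into an error of order $C\mu^{-2}\iint_{B_2\times B_2}|f(z)-f(z')|^2|z-z'|^{-d-2s}\dd z\dd z'$, which is at least as large as the quantity you are trying to bound and comes with a large constant, so it cannot be absorbed. That no fix confined to the scale of the pair can work is not a bookkeeping issue: color $\R^d$ by alternating slabs of width $\eps\ll r$, set $K(v,v')=\lambda|v-v'|^{-d-2s}$ when $v,v'$ have the same color and $0$ otherwise, and take $f$ the indicator of one color. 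The nondegeneracy hypothesis then holds (with $\mu$ close to $1/2$) for every ball of radius $\gtrsim r$, every $K$-weighted quantity your chain can generate at scale $\sim r$ vanishes on $f$, yet opposite-colored pairs at distance $r$ contribute $r^{-d-2s}$, which must be carried entirely by the unweighted middle term. Any correct proof must therefore couple the hypothesis across widely different scales (in the example it fails at scales below $\eps$ near the interfaces), which your scale-by-scale chaining never does; handling exactly this is what the argument in \cite{chaker2020} is about.

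There is also a smaller, fixable inaccuracy in the matched part: $z\in B$ only gives $|v-z|\lesssim|v-v'|$, not $|v-z|\sim|v-v'|$, so in your Fubini step the admissible $v'$ for a fixed pair $(v,z)$ fill a set of measure $\sim|v-v'|^d$ at \emph{every} dyadic scale $|v-v'|\gtrsim|v-z|$, and summing those scales produces a factor $\log(1/|v-z|)$ against the energy density $|f(v)-f(z)|^2K(v,z)$; a dyadic decomposition organizes this loss but does not remove it. The repair is to shrink the good set away from $v$: since $|G_v|\geq\mu|B|$ while $|B_\rho(v)|\leq c_d\rho^d$, the set $\{z\in B:\ K(v,z)\geq\lambda|v-z|^{-d-2s},\ |z-v|\geq c(\mu)\,r\}$ still has measure at least $\tfrac{\mu}{2}|B|$ for a suitable $c(\mu)>0$, and on it $|v-z|\sim r$, which makes the Fubini step clean. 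This repair, however, does not touch the main gap above.
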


The cone of nondegeneracy condition satisfied by the Boltzmann kernel $K_f$ is naturally stronger than the assumption of Theorem \ref{t:coercivity}. The coercivity of the Boltzmann collision operator was proved several times in the literature using various techniques. It is interesting to realize that it follows as a consequence of a general statement of integro-differential quadratic forms.

\section{Regularity estimates for the Kinetic Fokker-Planck equation}

The collision operator $Q(f,f)$ in \eqref{e:boltzmann} acts as a diffusion in the velocity variable. It may be a nonlocal diffusion in the case of the Boltzmann equation, or a more classical second order diffusion in the case of Landau equation. In any case, we must address the fact that the equation has a regularization effect with respect to all variables $t$, $x$ and $v$, even though the diffusion is with respect to velocity only. It is a hypoelliptic effect that comes from the interaction between the diffusion in velocity and the kinetic transport.

In 1934, Kolmogorov studied the following equation
\[ f_t + v \cdot \nabla_x f = \Delta_v f. \]
It is a simpler linear model than \eqref{e:boltzmann}, where the collision operator $Q$ is replaced by the usual Laplacian. Kolmogorov observed that for any initial data $f_0 \in L^1 + L^\infty$, the equation has a smooth solution. It follows immediately after explicitly computing its fundamental solution.
\[ K(t,x,v) = \begin{cases}
c_d t^{-2d} \exp \left( -\frac{|v|^2}{4t} - \frac{3 |x-tv/2|^2}{t^3} \right) &\text{for } t>0, \\
0 &\text{for } t \leq 0.
\end{cases}
\]
We compute the solution $f$, for any initial data $f_0$, by a modified convolution of $f_0$ with $K$. From this formula, we observe immediately that the solution is $C^\infty$ smooth in all variables.

For fractional diffusion, a similar analysis applies, but this time the fundamental solution is not explicit. If we consider, for $s \in (0,1),$
\[ f_t + v \cdot \nabla_x f + (-\Delta_v)^s f = 0, \]
there is a fundamental solution $K_s$ whose Fourier transform with respect to $x$ and $v$ is given by
\[ \hat K_s(t,\psi,\xi) = c \exp\left(-\int_0^t |\xi + \tau \psi|^{2s} \dd \tau \right). \]
It is easy to see that $K_s$ is smooth in all variables, so the fractional Kolmogorov equation enjoys analogous regularization properties as the classical one. It is also possible to see that the kernels $K_s$ are bounded, nonnegative and integrable. They have some polynomial decay as $x$ and $v$ go to infinity. However, the exact asymptotics have not been precisely described yet.

The regularity analysis for nonlinear equations depends on regularity results for \emph{linear} equations with variable (possibly rough) coefficients. In this section, we describe the three most fundamental regularity results, which bring to the kinetic setting the theorems of De Giogi-Nash-Moser and Schauder.

\subsection{De Giorgi meets kinetic equations}

The theorem of De Giorgi, Nash and Moser is arguably the most fundamental regularity result for nonlinear elliptic PDE. It is usually stated in terms of \emph{linear} equations. But since it does not have any regularity assumption on the coefficients, it is readily applicable to solutions of quasilinear elliptic equations whose coefficients depend somehow on the solution.

For kinetic equations with classical second order diffusion in divergence form, we study equations of the following form.
\begin{equation} \label{e:kinetic-div-form}
f_t + v \cdot \nabla_x f - \partial_{v_i} a_{ij}(t,x,v) \partial_{v_j} f = b \cdot \nabla_v f + h.
\end{equation}
Here, the function $h$ is a source term. We also included an extra drift term $b \cdot \nabla_v f$ on the right hand side. The coefficients $a_{ij}(t,x,v)$ are assumed to be uniformly elliptic. It means that there exists constants $\Lambda \geq \lambda > 0$ so that for all $(t,x,v)$ in the domain of the equation,  $\lambda \I \leq \{a_{ij}(t,x,v)\} \leq \Lambda \I$. It is important that there is no regularity assumption on the coefficients $a_{ij}$ beyond boundedness and measurability.

The kinetic version of De Giorgi, Nash, Moser theorem provides H\"older continuity estimates for solutions to \eqref{e:kinetic-div-form}. When the right hand side $h$ vanishes, there is also some form of Harnack inequality that holds. The following theorem was developed in \cite{pascucci2004}, \cite{zhang2008}, \cite{wang2009}, \cite{wang2011} and \cite{gimv-harnacklandau}. The presentation below matches the result in the last one of these papers.

\begin{thm} \label{t:kinetic-DG-div}
Assume $f: [0,1] \times B_1 \times B_1 \to \R$ is a weak solution of \eqref{e:kinetic-div-form} in $(0,1] \times B_1 \times B_1$ for some bounded functions $b$ and $h$. Then, there exists an $\alpha>0$ depending only on dimension and the ellipticity parameters $\lambda$ and $\Lambda$, and a constant $C$ depending on dimension, $\lambda$, $\Lambda$ and $\|b\|_{L^\infty}$, so that 
\[ \|f\|_{C^\alpha((1/2,1)\times B_{1/2} \times B_{1/2})} \leq C \left( \|f\|_{L^\infty([0,1] \times B_1 \times B_1)} + \|h\|_{L^\infty([0,1] \times B_1 \times B_1)} \right).\]
\end{thm}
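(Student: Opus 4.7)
The plan is to follow De Giorgi's iterative scheme adapted to the kinetic setting, in the spirit of Golse--Imbert--Mouhot--Vasseur. The natural geometric objects are the kinetic cylinders $Q_r(z_0)=\{(t,x,v):t_0-r^2<t\le t_0,\,|x-x_0-(t-t_0)v_0|<r^3,\,|v-v_0|<r\}$, which respect both the parabolic scaling $(t,x,v)\mapsto(r^2t,r^3x,rv)$ under which \eqref{e:kinetic-div-form} is invariant and the Galilean change of variables $(t,x,v)\mapsto(t,x+tv_0,v+v_0)$. By translating and dilating, it suffices to prove the estimate on the unit cylinder $Q_1$ centered at the origin. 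The first technical step is a Caccioppoli-type local energy inequality: testing \eqref{e:kinetic-div-form} against $\chi^2 (f-k)_+$ for a cutoff $\chi$ adapted to the kinetic geometry and integrating by parts in $v$, while transferring the transport term $v\cdot\nabla_x$ onto the cutoff, yields
\[
\sup_t \int \chi^2 (f-k)_+^2 \, dx\, dv + \lambda \iiint \chi^2 |\nabla_v (f-k)_+|^2 \lesssim \iiint \bigl(|(\partial_t+v\cdot\nabla_x)\chi^2| + |\nabla_v\chi|^2\bigr)(f-k)_+^2 + \|h\|_{L^\infty}^2,
\]
giving $L^\infty_t L^2_{x,v}\cap L^2_{t,x}H^1_v$ control of every super-level truncation $(f-k)_+$. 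The drift $b\cdot\nabla_v f$ is absorbed via Young's inequality against the diffusive term, costing only a factor depending on $\|b\|_{L^\infty}$.

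The hypoelliptic step is to upgrade this information to a genuine gain of integrability. Since $(f-k)_+$ lies in $L^2_{t,x}H^1_v$ and the kinetic transport $(\partial_t+v\cdot\nabla_x)(f-k)_+$ can be written as $\partial_{v_i}(a_{ij}\partial_{v_j}(f-k)_+)$ plus lower order terms, hence lies in $H^{-1}_v$ locally, a Bouchut-type velocity averaging estimate transfers this control into fractional regularity in $(t,x)$. A kinetic Sobolev embedding then gives $(f-k)_+ \in L^p_{t,x,v}$ for some $p>2$ with a quantitative bound in terms of the energy norm. This gain of integrability plays the role of the classical Sobolev embedding $H^1\hookrightarrow L^{2^*}$ in the parabolic De Giorgi argument.

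Armed with the energy inequality and the integrability gain, the first De Giorgi lemma (the $L^2\to L^\infty$ estimate) follows by iterating on a decreasing sequence of kinetic cylinders $Q_{r_n}$ and increasing levels $k_n\uparrow K$: the quantities $U_n:=\iiint_{Q_{r_n}}(f-k_n)_+^2$ satisfy a nonlinear recursion $U_{n+1}\lesssim C^n U_n^{1+\delta}$ which drives $U_n\to 0$, yielding $f\le K$ on $Q_{1/2}$ provided $\|f_+\|_{L^2(Q_1)}$ and $\|h\|_{L^\infty}$ are small. The second De Giorgi lemma (the intermediate-value / isoperimetric lemma) asserts that if $|\{f\le0\}\cap Q_r|$ and $|\{f\ge1\}\cap Q_r|$ are both comparable to $|Q_r|$, then $|\{0<f<1\}\cap Q_r|$ is bounded below by a positive constant. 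The proof applies the energy inequality to a suitable truncation between $0$ and $1$ to produce $H^1_v$ control of this truncation, and then uses an isoperimetric-type argument combined once more with velocity averaging to connect the two level sets across the whole cylinder.

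Finally, the oscillation-decay conclusion is standard: after rescaling so that $\operatorname{osc}_{Q_1} f\le 1$, one may assume $|\{f\le 1/2\}\cap Q_1|\ge \tfrac12 |Q_1|$. If also $|\{f\ge 1-\eta\}\cap Q_{1/2}|$ is not already small, the second lemma forces a uniformly large ``intermediate'' set $\{1/2<f<1-\eta\}$; iterating this finitely many times, the mass of $(f-(1-\eta))_+$ becomes small enough for the first lemma to apply, yielding $f\le 1-\eta/2$ on $Q_{1/4}$. Hence $\operatorname{osc}_{Q_{r_*}} f\le (1-\eta/2)\operatorname{osc}_{Q_1} f$ for some universal $r_*\in(0,1)$, which by iteration gives Hölder regularity with exponent $\alpha=-\log(1-\eta/2)/\log(1/r_*)$. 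The hardest step by far is the intermediate-value lemma: the diffusion is degenerate, acting only in $v$, so one cannot rely on a local Poincaré inequality, and the propagation of level-set information across $(t,x,v)$ must be extracted delicately from the averaging lemma. The drift $b$ enters only through constants and can be handled uniformly via Galilean shifts and energy absorption.
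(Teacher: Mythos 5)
Your outline follows essentially the same route as the proof the paper points to (the De Giorgi scheme of Golse--Imbert--Mouhot--Vasseur): kinetic cylinders adapted to the scaling $(r^2t,r^3x,rv)$ and Galilean invariance, a Caccioppoli energy estimate for truncations, a hypoelliptic gain of integrability via velocity averaging, the two De Giorgi lemmas, and oscillation decay. The only minor deviation is in the intermediate-value lemma, which in the cited work is obtained by a compactness/contradiction argument resting on averaging rather than a direct quantitative isoperimetric inequality, but this is a variation in detail, not in approach.
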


Theorem \ref{t:kinetic-DG-div} can be used to conclude that solutions to the inhomogeneous Landau equation that satisfy \eqref{e:mass_density_assumption}, \eqref{e:energy_density_assumption} and \eqref{e:entropy_density_assumption} are H\"older continuous. It is a key ingredient in the proof of Theorem \ref{t:conditional-regularity_landau}.

In order to study the regularity of the Boltzmann equation, we need a similar result but for integro-differential diffusion. Let us now consider equations of the form
\begin{equation} \label{e:kinetic-integral}
f_t + v \cdot \nabla_x f - \int_{\R^d} (f(t,x,v') - f(t,x,v)) K(t,x,v,v') \dd v' = h.
\end{equation}
Here $h$ is a source term. The kernel $K$ is a nonnegative function. We want to study equation \eqref{e:kinetic-integral} with an eye on possible applications to the Boltzmann equation. Thus, we will assume that the kernel $K$ satisfies the nondegeneracy conditions described in section \ref{s:ide}.

The following theorem is proved in \cite{imbert-silvestre-whi2020}.

\begin{thm} \label{t:kinetic-DG-integral}
Assume $f: [0,1] \times B_1 \times \R^d \to \R$ is a bounded weak solution of \eqref{e:kinetic-integral} for $(t,x,v) \in (0,1] \times B_1 \times B_1$ and some bounded function $h$. We make the following assumptions on the kernel $K$.
\begin{itemize}
\item \textbf{Upper bound on average}. There is a constant $\Lambda$ such that for all $(t,x,v) \in (0,1] \times B_1 \times B_1$ and $r>0$,
\[ \int_{B_{r}(v)} K(t,x,v,v') |v-v'|^2 \dd v' \leq \Lambda r^{2-2s}.\]
\item \textbf{Nondegeneracy condition}. There exist two constants $\lambda>0$ and $\mu>0$ so that for any $(t,x,v) \in (0,1] \times B_1 \times B_1$ and any ball $B \subset B_2$ that contains $v$, we have 
\[ |\{v' \in B : K(t,x,v,v') \geq \lambda |v-v'|^{-d-2s} \}| \geq \mu |B|. \]
\item \textbf{The cancellation condition}.  For all $(t,x,v) \in (0,1] \times B_1 \times B_1$ and $r \in [0,1]$,
\[ \left\vert \int_{B_r(v)} K(t,x,v,v') - K(t,x,v',v) \dd v' \right\vert \leq \Lambda r^{-2s}.\]
If $s \geq 1/2$, we also have
\[ \left\vert \int_{B_r(v)} (v-v') (K(t,x,v,v') - K(t,x,v',v)) \dd v' \right\vert \leq \Lambda r^{1-2s}.\]
\end{itemize}
Then $f$ is H\"older continuous and it satisfies the following estimate for some $\alpha>0$.
\[ \|f\|_{C^\alpha((1/2,1)\times B_{1/2} \times B_{1/2})} \leq C \left( \|f\|_{L^\infty([0,1] \times B_1 \times \R^d)} + \|h\|_{L^\infty([0,1] \times B_1 \times B_1)} \right).\]
The constants $\alpha$ and $C$ depend on dimension, $\lambda$, $\Lambda$ and $\mu$ only.
\end{thm}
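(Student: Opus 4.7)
The plan is to carry out a De Giorgi iteration adapted to the kinetic scaling $(t,x,v) \mapsto (r^{2s} t, r^{1+2s} x, r v)$ and to the nonlocal diffusion. The associated kinetic cylinders are
\[
Q_r(t_0,x_0,v_0) := \{ (t,x,v) : t_0 - r^{2s} < t \leq t_0,\ |x - x_0 - (t-t_0) v_0| < r^{1+2s},\ |v - v_0| < r\},
\]
and H\"older regularity with exponent $\alpha$ will follow by iteration from a one-step oscillation decay $\osc_{Q_{r/2}} f \leq (1-\theta) \osc_{Q_r} f$ on this family, combined with affine changes of variable that translate the oscillation estimate back to the original coordinates.

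The first analytic input is a local energy (Caccioppoli) estimate for truncations $f_k := (f-k)_+$. Testing the equation against $f_k \varphi^2$ with $\varphi$ a smooth cutoff adapted to a kinetic cylinder, the transport term $v \cdot \nabla_x$ integrates by parts to a boundary contribution, while the nonlocal bilinear form $\iint (f(v')-f(v))(f_k\varphi^2(v') - f_k\varphi^2(v)) K \, dv \, dv'$ decomposes, via the cancellation hypothesis and Proposition \ref{p:Hs_boundedness}, into a coercive piece that is controlled below by Theorem \ref{t:coercivity} and localization errors of lower order, plus a tail $\int_{|v'| \text{ large}} f(v') K(v,v')\,dv'$. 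This tail is handled by the $L^\infty$ bound on $f$ over the full velocity space (which is why that hypothesis is imposed in the theorem) together with the upper-bound-on-average condition.

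Next I would prove De Giorgi's first lemma: if $\|f_+\|_{L^2(Q_1)}$ is sufficiently small in terms of $\|f\|_{L^\infty}$ and $\|h\|_{L^\infty}$, then $f \leq 1/2$ on $Q_{1/2}$. The iteration uses a shrinking family of cylinders with level sets $k_n = \tfrac12(1 - 2^{-n})$, and the geometric gain in each step comes from promoting the fractional Sobolev embedding $\dot H^s \hookrightarrow L^{2d/(d-2s)}$ in velocity to a space--time gain through a kinetic averaging / Bouchut-type lemma that exploits the transport $v \cdot \nabla_x$. This is the standard pattern familiar from the classical kinetic De Giorgi--Nash theorem, only now with the nonlocal $\dot H^s$ coercivity in place of $\dot H^1$.

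The heart of the argument, and the step I expect to be the main obstacle, is De Giorgi's second (intermediate value / ink-spot) lemma. One must show that if $\{f \leq 0\}$ has positive density in the past portion of $Q_1$ and $\{f \geq 1/2\}$ has positive density in $Q_{1/2}$, then $|\{0 < f < 1/2\}|$ on an intermediate cylinder is quantitatively positive. The nonlocal counterpart of the classical isoperimetric inequality is applied to a logarithmic truncation, and it crucially needs both the nondegeneracy cone (to guarantee a macroscopic measure of pairs $(v,v')$ for which $K(v,v') \gtrsim |v-v'|^{-d-2s}$) and the cancellation condition (to make sense of the test-function manipulation when $s \geq 1/2$). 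The extra difficulty peculiar to the kinetic setting is that the transport moves the two level sets through $x$-space as time evolves, so the ``intermediate'' cylinder must be selected along the corresponding kinetic trajectory rather than around a fixed base point; the scaling of $Q_r$ in $x$ by $r^{1+2s}$ is tuned precisely so that this transfer can be carried out with constants depending only on $d, s, \lambda, \Lambda, \mu$. Once this second lemma is in hand, combining it with the first lemma yields the quantitative $\theta > 0$ in the oscillation reduction, and iterating over the nested cylinders $Q_{2^{-n}}$ gives the stated H\"older estimate.
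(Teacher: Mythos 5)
Your outline follows the same general strategy as the proof this survey points to (the theorem is not proved here; it is quoted from \cite{imbert-silvestre-whi2020}): a Caccioppoli estimate for truncations using the coercivity of Theorem \ref{t:coercivity} and the boundedness of Proposition \ref{p:Hs_boundedness}, a gain of integrability in all variables via velocity averaging/hypoellipticity for the fractional Kolmogorov operator, a first De Giorgi lemma, a measure-theoretic second lemma, and an oscillation-decay iteration along kinetic cylinders and Galilean changes of variables. As a plan, that is the right skeleton. However, the step you yourself identify as the heart of the matter is left essentially unproved, and the tool you invoke for it does not exist in this setting. There is no off-the-shelf ``nonlocal counterpart of the classical isoperimetric inequality'' that one can apply to a logarithmic truncation under the hypotheses of this theorem: the kernel is only assumed nondegenerate on sets of positive density (a cone-type condition), it has no pointwise upper bound, and it satisfies only the nondivergence-type cancellation rather than the symmetry $K(v,v')=K(v',v)$ needed for the variational manipulations behind De Giorgi-type isoperimetric arguments. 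Even in the elliptic nonlocal literature the absence of De Giorgi's isoperimetric lemma is a known obstruction that requires a genuinely different argument, and in the kinetic nonlocal case the intermediate-value lemma is precisely where the cited proof concentrates its effort, with an ad hoc construction compatible with the transport term; moreover the passage from the resulting measure estimates to pointwise bounds there goes through a weak Harnack inequality and a covering (ink-spots, stacked kinetic cylinders) argument rather than the bare two-lemma oscillation scheme. Writing ``apply the nonlocal isoperimetric inequality'' therefore does not close the argument; it restates the main open step of your proposal.

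A second concrete gap concerns the iteration itself. For a nonlocal equation the oscillation-decay step cannot be run on functions normalized to $[0,1]$ only inside a cylinder: after each rescaling and renormalization the function is no longer controlled far away in $v$, and the operator sees those tails. Your proposal uses the global bound $\|f\|_{L^\infty([0,1]\times B_1\times\R^d)}$ to control the tail in the first Caccioppoli estimate, but that bound does not survive the renormalizations in the iteration, where the rescaled function may be of size comparable to the inverse of the accumulated oscillation reduction outside the unit cylinder. The standard repair is to prove the decrease-of-oscillation lemma for functions bounded outside the cylinder by a slowly growing envelope of the form $1+(|v|^{\varepsilon}-1)_+$ and then choose the H\"older exponent $\alpha$ small enough that the envelope is reproduced at every scale; some statement of this kind (and a verification that the tail contribution is admissible in both De Giorgi lemmas) must be built into the argument, and it is absent from the proposal.
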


Note that the assumptions on Theorem \ref{t:kinetic-DG-integral} are exactly the union of the assumptions of Proposition \ref{p:Hs_boundedness} and Theorem \ref{t:coercivity}. The Boltzmann kernel satisfies a cone of nondegeneracy condition that implies the nondegeneracy hypothesis of Theorem \ref{t:kinetic-DG-integral}.

Remarkably, Theorems \ref{t:kinetic-DG-div} and \ref{t:kinetic-DG-integral} are not specifically for the Landau and Boltzmann equation as in \eqref{e:boltzmann}. They apply to a more general family of kinetic Fokker-Planck equations. In the case of Landau equation, the coefficients $a_{ij}$ will be related to the solution through \eqref{e:landau}. In the case of the Boltzmann equation, the kernel $K$ will be related to $f$ through \eqref{e:boltzmann-kernel}. Theorems \ref{t:kinetic-DG-div} and \ref{t:kinetic-DG-integral} apply regardless of these extra relations.

\subsection{Schauder meets kinetic equations}

The Schauder estimates provide an a priori estimate in higher order H\"older spaces when we know, in addition to uniform ellipticity, that the coefficients of the equation are H\"older continuous.

Unlike the H\"older estimate obtained using De Giorgi's method, the Schauder estimates provide an estimate in a H\"older norm with a precise exponent. When we consider a kinetic Fokker-Planck equation like \eqref{e:kinetic-div-form}, we gain exactly two derivatives in $v$. In the fractional order case \eqref{e:kinetic-integral}, we should expect to gain $2s$ derivatives in $v$. The exact gain of regularity with respect to the variables $t$ and $x$ is less obvious. We need to keep track of the precise balance of scales between these variables. For that, it is important to analyze the group of transformations that leave the class of equations \eqref{e:kinetic-div-form} or \eqref{e:kinetic-integral} invariant.

Let us focus on the fractional order case \eqref{e:kinetic-integral}, which is the more complex of the two. We observe that if $f$ solves \eqref{e:kinetic-integral} for some kernel $K$ satisfying the hypothesis of Theorem \ref{t:kinetic-DG-integral}, then the same is true for the scaled function
\[ f_r(t,x,v) = f(r^{2s}t, r^{1+2s}x, rv). \]
The kernel $K$ would have to be replaced with $r^{d+2s} K(r^{2s}t, r^{1+2s}x, rv)$, which satisfies the same assumptions as the original $K$.

Because this is the natural scaling of the equation, we define the kinetic cylinders at the origin $Q_r$ as
\[ Q_r:= (-r^{2s},0] \times B_{r^{1+2s}} \times B_r.\]

The class of equations \eqref{e:kinetic-integral} is not translation invariant because the coefficient in the drift term depends on $v$. It is \emph{Galilean} invariant, in the sense that we can change coordinates to another inertial frame of reference. That is, given $(s,y,w)$, if we define
\[ (s,y,w) \circ (t,x,v) = (s+t, y+x+tw, w+v),\]
then $\tilde f(t,x,v) = f((s,y,w) \circ (t,x,v))$ would satisfy also an equation like \eqref{e:kinetic-integral} with $K$ replaced with $\tilde K(t,x,v) = K((s,y,w) \circ (t,x,v))$.

In order to state the Schauder estimates for the equation \eqref{e:kinetic-integral}, we need to use a H\"older norm that is compatible with the scaling and Galilean invariance of the equation. The best way to achieve it is by defining an appropriate notion of distance.

\begin{defn} \label{d:distance}
For any two points $z_1 = (t_1,x_1,v_1)$ and $z_2 = (t_2, x_2, v_2)$ in $\R^{1+2d}$, we define the following \emph{kinetic} distance
\[ d_\ell(z_1,z_2) := \min_{w \in \R^d} \left\{ \max \left( |t_1-t_2|^{ \frac 1 {2s} } , |x_1-x_2-(t_1-t_2)w|^{ \frac 1 {1+2s} } , |v_1-w| , |v_2-w| \right) \right\}.\]
\end{defn}

Strictly speaking, this is only a distance (it satisfies the triangle inequality) when $s \geq 1/2$. For smaller values of $s \in (0,1/2)$, $d_\ell^{2s}$ is a distance. In either case, we use the expression for $d_\ell$ as in Definition \ref{d:distance} to keep consistency of the $1$-homogeneity with respect to the variable $v$.

The distance $d_\ell$ has the following two properties.
\begin{itemize}
  \item It is homogeneous with respect to the scaling of the equation, in the sense that
  \[ d_\ell((r^{2s}t_1, r^{1+2s}x_1, rv_1),(r^{2s}t_2, r^{1+2s}x_2, rv_2)) = r d_\ell((t_1,x_1,v_1),(t_2,x_2,v_2)).\]
  \item It is invariant by left Galilean translations. That is, for every $z_0,z_1,z_2 \in \R^{2d+1}$,
  \[ d_\ell(z_0 \circ z_1) = d_\ell(z_0 \circ z_2).\]
\end{itemize}

The subindex ``$\ell$'' is meant to indicate that the distance is \textbf{``l''}eft invariant, as opposed to right-invariant.

In terms of this kinetic distance, we define the kinetic H\"older norms.

\begin{defn} \label{d:holder-space} Let $\Omega \subset \R^{2d+1}$. For any
  $\alpha \in (0,\infty)$, we define the $C^\alpha(z_0)$ semi-norm of a function $f : \Omega \to \R$ as the smallest constant $C$ so that for all $z \in \Omega$
  \[ |f(z) - p(z)| \leq C d_\ell(z,z_0)^\alpha,\] 
  for some polynomial $p(t,x,v)$ whose kinetic degree is less than $\alpha$.

  Moreover, the $C^\alpha$ semi-norm of $f$ in $\Omega$ is the supremum of $\|f\|_{C^\alpha(z_0)}$ for all $z_0 \in \Omega$.
  
  The full norm $\|f\|_{C_\ell^\alpha(D)}$ is defined as $[f]_{C_\ell^\alpha(D)}+[f]_{L^\infty(D)}$.
\end{defn}

We left out the definition of kinetic degree of a polynomial. It is adjusted by the scaling defined above so that the variable $v$ has degree one, $t$ has degree $2s$, and $x$ has degree $1+2s$.

Now that we have a precise notion of H\"older continuity that takes into account the scaling and Galilean invariance of the equation, we are ready to state the kinetic Schauder estimates. The following theorem is proved in \cite{imbert2018schauder}.

\begin{thm}[The Schauder estimate]
  \label{t:schauder}
  Let $\alpha \in (0, \min (1,2s))$ and $\alpha' = 2s \alpha / (1+2s)$. Let
  $K\colon Q_1 \times \R^d \to \R$ be a nonnegative kernel satisfying the following assumptions.
  \begin{itemize}
    \item \textbf{Upper bound on average}. There is a constant $\Lambda$ such that for all $(t,x,v) \in Q_1$ and $r>0$,
\[ \int_{B_{r}(v)} K(t,x,v,v') |v-v'|^2 \dd v' \leq \Lambda r^{2-2s}.\]
    \item \textbf{Nondegeneracy condition}. There exist two constants $\lambda>0$ and $\mu>0$ so that for any $(t,x,v) \in Q_1$ and any ball $B \subset B_2$ that contains $v$, we have 
\[ |\{v' \in B : K(t,x,v,v') \geq \lambda |v-v'|^{-d-2s} \}| \geq \mu |B|. \]
  \item \textbf{Nondivergence symmetry}.  For all $(t,x,v) \in Q_1$ and $w \in \R^d$, $K(t,x,v,v+w) = K(t,x,v,v-w)$.
  \item \textbf{Holder continuity}. For all $(t_1,x_1,v_1), (t_2,x_2,v_2) \in Q_1$ and $r>0$,
  \[ \int_{B_r} |K(t_1,x_1,v_1,v_1+w)-K(t_2,x_2,v_2,v_2+w)| |w|^2 \dd w \leq A_0 r^{2-2s} d_\ell((t_1,x_1,v_1),(t_2,x_2,v_2))^{\alpha'}. \]
  \end{itemize}

  Let $h: Q_1 \to \R$ be $\alpha'$-H\"older continuous. Assume further that $2s+\alpha' \notin \{1,2\}$.

  If $f$ satisfies \eqref{e:kinetic-integral} in $Q_1$, then
  \[
    [f]_{C^{2s+\alpha'} (Q_{1/2})} \le C (\| f \|_{C^\alpha ((-1,0] \times B_1 \times \R^d)} + \|h\|_{C^{\alpha'} (Q_1)}) .
  \]
  The constant $C$ only depends on dimension, $\alpha$, the order $2s$ of the
  integral diffusion, ellipticity constants $\mu,\lambda,\Lambda$ and
  $A_0$.
\end{thm}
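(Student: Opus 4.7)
The plan is to establish the estimate via a Campanato-type polynomial approximation scheme compatible with the kinetic scaling $(t,x,v) \mapsto (r^{2s}t, r^{1+2s}x, rv)$ and Galilean left-translation. By Definition~\ref{d:holder-space}, it suffices to show that for every $z_0 \in Q_{1/2}$ and every $r \in (0,1/2]$ there is a polynomial $p_{z_0,r}$ of kinetic degree less than $2s+\alpha'$ such that
\[ \|f - p_{z_0,r}\|_{L^\infty(Q_r(z_0))} \le C r^{2s+\alpha'} \bigl( \|f\|_{C^\alpha} + \|h\|_{C^{\alpha'}(Q_1)} \bigr), \]
where $Q_r(z_0)$ is the kinetic cylinder translated by $z_0$ via the Galilean group. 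After left-translating by $z_0$ and applying the parabolic rescaling, the problem reduces to a dyadic iteration centered at the origin.

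The key building block is a regularity estimate for the \emph{frozen} equation. For $K_0(v,v') := K(0,0,0,v,v')$, the upper bound, nondegeneracy and nondivergence symmetry are all preserved. Solutions $g$ to $\partial_t g + v \cdot \nabla_x g = L_{K_0} g$ are translation invariant in $(t,x)$, so one can apply Theorem~\ref{t:kinetic-DG-integral} to finite differences in $t$ and $x$ to get $C^\infty$ interior estimates in these variables, and then use the equation together with the nondivergence symmetry of $K_0$ to bootstrap regularity in $v$. The output is that every bounded solution of the frozen equation in $Q_1$ admits, at the origin, a polynomial $p$ of kinetic degree less than $2s+\alpha'$ with
\[ \|g - p\|_{L^\infty(Q_r)} \le C_0 r^{2s+\alpha'} \|g\|_{L^\infty((-1,0]\times B_1 \times \R^d)}, \qquad r \in (0,1/2]. \]

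With this in hand, I would prove an improvement-of-approximation lemma by a compactness-contradiction argument. The claim is that there exist $\rho \in (0,1)$ and a normalization so that, if $f$ is suitably small and $h$ is suitably small in their respective norms, one can find a polynomial $p$ of kinetic degree less than $2s+\alpha'$ satisfying $\|f - p\|_{L^\infty(Q_\rho)} \le \rho^{2s+\alpha'}$ with controlled coefficients. If this failed, a sequence of counterexamples, after subtracting the best polynomial at scale $1$ and rescaling by $\rho^{-(2s+\alpha')}$, would produce functions solving equations whose kernels still satisfy the nondegeneracy and upper-bound conditions uniformly, with the drift-perturbation $(L_K - L_{K_0})f$ controlled by $A_0$ times a Hölder modulus that tends to zero. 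Passing to a limit (using the $C^\alpha$ compactness provided by Theorem~\ref{t:kinetic-DG-integral}) would yield a bounded solution of a frozen equation that contradicts the constant-coefficient polynomial approximation above. Iterating this improvement across dyadic scales $\rho^k$ produces the required Campanato bound at the origin, and Galilean translations distribute the estimate over all of $Q_{1/2}$.

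The principal obstacle is the control of the nonlocal tails. When a polynomial $p_r$ of kinetic degree up to $\lfloor 2s+\alpha' \rfloor$ is subtracted from $f$ on $Q_r$, the difference $f - p_r$ is small inside $Q_r$ but grows polynomially outside, while the nondegeneracy and upper-bound assumptions on $K$ only give local control (second-moment averages on small balls). One must split the integral defining $L_K(f-p_r)$ into dyadic shells, using the exact cancellation provided by the nondivergence symmetry $K(v,v+w) = K(v,v-w)$ to absorb the polynomial pieces of odd parity and the cancellation condition to absorb the constant-in-$w$ contribution, and then summing geometric series in the remaining shells. The exclusion $2s+\alpha' \notin \{1,2\}$ enters here: these are precisely the resonant thresholds at which the polynomial $p_r$ gains or loses a term of integer degree whose tail would otherwise fail to be summable against $K$. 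Compatibility of all these shell estimates with the Galilean group (which shifts $v$ but leaves the structure invariant) must be maintained carefully throughout the iteration.
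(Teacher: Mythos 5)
This theorem is not proved in the survey itself (it is quoted from \cite{imbert2018schauder}), so I can only compare your outline with the strategy of that reference; your general scheme --- a kinetic Campanato/improvement-of-flatness iteration with frozen kernels, compactness, Galilean translations, and the resonance exclusion $2s+\alpha'\notin\{1,2\}$ at the integer thresholds where the approximating polynomial gains the monomials $v$ and $v\otimes v$ --- is indeed the right family of arguments. However, two of your key steps have genuine gaps as written. First, the frozen-kernel lemma: you freeze only $(t,x)$ (your $K_0$ retains the dependence on the base velocity) and then claim interior $C^\infty$ regularity by difference quotients in $t,x$ plus a ``bootstrap in $v$ using the equation and the nondivergence symmetry.'' No such bootstrap exists: with a kernel that is merely measurable in the base point $v$ (only the averaged upper bound and the measure-theoretic nondegeneracy are assumed), regularity in $v$ beyond the $C^\alpha$ of Theorem \ref{t:kinetic-DG-integral} is exactly the variable-coefficient problem you are trying to solve, and knowing that $L_{K_0}f$ is smooth in $(t,x)$ gives nothing in $v$. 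The freezing must be of the entire base point $z_0=(t_0,x_0,v_0)$, producing a translation-invariant kernel $w\mapsto K(z_0,v_0+w)$; then $v$-regularity comes from Galilean increments $f(t,x+th,v+h)$ (equivalently, a Liouville-type theorem for the translation-invariant equation), and it is the H\"older hypothesis in the $d_\ell$ metric that controls the error $(L_K-L_{K_0})$ in the perturbation step.

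Second, the tail estimate is incorrect as described. The nondivergence symmetry does cancel the odd-in-$w$ part of the subtracted polynomial shell by shell, but the top even part does not sum: when $2s+\alpha'>2$ the polynomial contains a $v$-quadratic term, and the only available bound $\int_{B_r(v)}K\,|v-v'|^2\dd v'\leq\Lambda r^{2-2s}$ makes the dyadic shells contribute like $2^{j(2-2s)}$, a divergent series; moreover $f-p_r$ is not small at large $|v|$ in any case, since $f$ is only bounded and $C^\alpha$ there while $p_r$ grows. The standard repair --- and the reason the right-hand side of the theorem carries the global norm $\|f\|_{C^\alpha((-1,0]\times B_1\times\R^d)}$ rather than a local one --- is to subtract the polynomial only in a fixed neighborhood of the base point and to estimate the far-field contribution directly by the global $C^\alpha$/$L^\infty$ control of $f$, using $\int_{|w|>\rho}K\dd w\lesssim\Lambda\rho^{-2s}$, which follows from the averaged upper bound by dyadic decomposition. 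Also note that the ``cancellation condition'' you invoke for the constant-in-$w$ contribution is not among the hypotheses of this theorem (it is replaced here by the nondivergence symmetry, and constants cancel anyway inside the difference $f(v')-f(v)$). With the freezing corrected and the tails handled by truncation against the global norm, your outline would line up with the argument of the cited paper.
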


The first two conditions on the kernel are the same as in Theorem \ref{t:kinetic-DG-integral}. The symmetry condition replaces the cancellation condition. Schauder theorem is, in this case, a statement for equations in nondivergence form. The last condition on the kernel is the integral-version, with respect to the kinetic distance, of the H\"older continuity of the coefficients in the classical case.

The Schauder estimates for kinetic equations with standard second order diffusion is a particular case of a more general theory developed in the context of hypoelliptic equations (see \cite{sergio2004recent} and references therein). A simplified version of the proof in \cite{imbert2018schauder} also leads to the following result.

\begin{thm}[The Schauder estimate for kinetic equations with second order diffusion]
  \label{t:schauder-2nd-order}
  Let $\alpha \in (0, 1)$. Let $h: Q_1 \to \R$ be $\alpha$-H\"older continuous. 

  Suppose $f$ is a classical solution of
  \[ f_t + v \cdot \nabla_x f - a_{ij}(t,x,v) \partial_{v_i v_j} f = h \text{ in } Q_1,\]
  for some $C^\alpha$ function $h$ and uniformly elliptic coefficients $a_{ij}$ that are $C^\alpha$ H\"older continuous in $Q_1$ with respect to the $d_\ell$ distance defined with $s=1$. Then
  \[
    [f]_{C^{2+\alpha} (Q_{1/2})} \le C (\| f \|_{C^\alpha (Q_1)} + \|h\|_{C^{\alpha} (Q_1)}) .
  \]
  The constant $C$ depends on dimension, ellipticity constants $\lambda,\Lambda$ and the H\"older norm of $a_{ij}$.
\end{thm}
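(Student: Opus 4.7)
The plan is to prove this second-order Schauder estimate by the same perturbative Campanato-type scheme used for the integro-differential version in Theorem \ref{t:schauder}, specialized to the classical second-order case where the constant-coefficient model has an explicit fundamental solution.

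First I would establish the constant-coefficient analogue. For any constant positive-definite matrix $A$ satisfying the ellipticity bounds, the Kolmogorov-type operator
\[ L_A := \partial_t + v \cdot \nabla_x - A_{ij} \partial_{v_i v_j} \]
admits an explicit Gaussian fundamental solution, obtained from Kolmogorov's 1934 formula after a linear change of variables diagonalizing $A$. Differentiating this kernel yields quantitative interior derivative estimates, and a standard Liouville-type argument (rescaling by the kinetic dilation and using compactness) shows that any entire classical solution of $L_A u = 0$ with $|u(z)| \le C(1+d_\ell(z,0))^{2+\alpha}$, where $d_\ell$ is the distance of Definition \ref{d:distance} with $s=1$, must lie in the space of kinetic-degree-$\le 2$ polynomials, namely $\operatorname{span}\{1, v_i, v_i v_j, t\}$. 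Together these give the constant-coefficient Schauder estimate $[u]_{C^{2+\alpha}(Q_{1/2})} \le C(\|u\|_{L^\infty(Q_1)} + \|L_A u\|_{C^\alpha(Q_1)})$ with $C$ depending only on $d$, $\alpha$ and the ellipticity constants.

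For the variable coefficient case, I would fix $z_0 \in Q_{1/2}$, freeze the leading coefficients, and rewrite the equation as $L_{A(z_0)} f = h + (a_{ij}(z) - a_{ij}(z_0)) \partial_{v_i v_j} f$. The goal, in view of Definition \ref{d:holder-space}, is a polynomial $p_{z_0}$ of kinetic degree at most $2$ with $|f(z) - p_{z_0}(z)| \lesssim d_\ell(z,z_0)^{2+\alpha}$ near $z_0$. I would construct $p_{z_0}$ inductively on dyadic scales: build polynomials $p_k \in \operatorname{span}\{1, v'_i, v'_i v'_j, t'\}$ written in the Galilean-translated coordinates based at $z_0$, satisfying $\sup_{Q_{\eta^k}(z_0)} |f - p_k| \le M \eta^{k(2+\alpha)}$ for a small $\eta \in (0,1)$ chosen in terms of the constant from the constant-coefficient estimate. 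At each step, apply the Galilean shift to $z_0$ followed by dilation by $\eta^{-k}$ to reduce $f - p_k$ to a bounded function on $Q_1$ solving an equation of the same form whose right-hand side has $C^\alpha$ norm of order $\eta^{k\alpha}$; here the $\eta^{k\alpha}$ comes from the H\"older modulus of $a_{ij}$ with respect to $d_\ell$ and of $h$, multiplied by the bounded quantities provided by the inductive hypothesis. Applying the constant-coefficient estimate delivers a quadratic correction in $Q_\eta$, which pulled back becomes $p_{k+1}$.

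The main obstacle is the bookkeeping under the non-isotropic scaling and the left Galilean translation: one must check that the polynomial class $\operatorname{span}\{1, v'_i, v'_i v'_j, t'\}$ is preserved by the composition of rescaling, Galilean shift, and addition of the quadratic correction; that the residue $L_{A(z_0)} p_k$ is a constant which can be absorbed cleanly into the source term of the rescaled equation at the next scale; and that the distance $d_\ell$ from Definition \ref{d:distance} tracks the inductive error with the correct homogeneity so the geometric series in $k$ closes. These are the delicate points addressed in \cite{imbert2018schauder}; in the present second-order setting they simplify considerably, since there are no kernel tail integrals or nonlocal cancellation conditions to maintain. Once the polynomial approximation is established uniformly in $z_0 \in Q_{1/2}$, Definition \ref{d:holder-space} converts it directly into the stated $C^{2+\alpha}(Q_{1/2})$ bound.
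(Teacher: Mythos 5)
Your overall strategy---constant-coefficient estimates obtained from the explicit Kolmogorov fundamental solution together with a Liouville classification of entire solutions with sub-cubic kinetic growth, followed by coefficient freezing and a Campanato-type iteration on kinetic cylinders compatible with the dilation and Galilean structure---is exactly the route the paper itself points to: the paper gives no proof of Theorem \ref{t:schauder-2nd-order}, deferring to the hypoelliptic Schauder literature \cite{sergio2004recent} and to a simplified version of the argument of \cite{imbert2018schauder}, and your sketch is a faithful outline of that simplified argument. In particular, the tangent polynomial class $\operatorname{span}\{1, v_i, v_iv_j, t\}$ for exponents $2+\alpha<3$, and its invariance under left Galilean translations and kinetic dilations, are correctly identified, as is the role of Definitions \ref{d:distance} and \ref{d:holder-space}.

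There is, however, one step that does not close as written. In the inductive improvement you assert that the rescaled right-hand side has $C^{\alpha}$ norm of order $\eta^{k\alpha}$ ``multiplied by the bounded quantities provided by the inductive hypothesis.'' After freezing the coefficients at $z_0$, the source term contains $(a_{ij}(z)-a_{ij}(z_0))\,\partial_{v_iv_j}(f-p_k)$, and your inductive hypothesis controls only $\sup_{Q_{\eta^k}(z_0)}|f-p_k|$, not its second $v$-derivatives; the part $(a_{ij}(z)-a_{ij}(z_0))\,\partial_{v_iv_j}p_k$ is indeed harmless, but the remainder is not covered. You also cannot patch this with the usual compactness approximation lemma, since equicontinuity for nondivergence kinetic equations with merely bounded measurable coefficients is precisely the open Conjecture \ref{c:kinetic-krylov}. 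The standard repair, and the one consistent with \cite{imbert2018schauder}, is to run the scheme as an a priori estimate on a classical solution and absorb the freezing error using kinetic interpolation inequalities: at scale $\eta^{k}$ the coefficient oscillation is $O(\eta^{k\alpha})$, so the offending term is bounded by a small multiple of the $C^{2+\alpha}$ seminorm being estimated (plus lower-order quantities) and can be absorbed into the left-hand side; alternatively one can control $D^2_v(f-p_k)$ on the half-cylinder by interior estimates at the previous scale. Either way this needs to be stated explicitly, since without it the geometric series in $k$ does not close.
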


\section{Proving the conditional regularity estimates of Theorem \ref{t:conditional-regularity_boltzmann}}

Let us outline the ingredients in the proof of Theorem \ref{t:conditional-regularity_boltzmann}. It has the following steps.

\begin{enumerate}
  \item Obtain pointwise upper bounds for the solution $f$ depending on the constants in \eqref{e:mass_density_assumption}, \eqref{e:energy_density_assumption} and \eqref{e:entropy_density_assumption}. These upper bounds should decay faster than $\langle v \rangle^{-q}$, for any exponent $q>0$.
  \item Use Theorem \ref{t:kinetic-DG-integral} to deduce H\"older estimates for $f$.
  \item Once we know that $f$ is H\"older, we see through the formula \eqref{e:boltzmann-kernel} that the kernel $K_f$ is H\"older continuous as well. Thus, we use the Schuader estimates of Theorem \ref{t:schauder} to achieve higher order H\"older estimates for $f$.
  \item There is a change of variables that makes the ellipticity of $K_f$ uniform for large velocities. In that way, the local regularity estimates given by Theorems \ref{t:kinetic-DG-integral} and \ref{t:schauder} are transformed into global estimates.
  \item We iterate the application of the Schauder estimates to derivatives of the function $f$ to obtain $C^\infty$ estimates.
\end{enumerate}

While Theorems \ref{t:kinetic-DG-integral} and \ref{t:schauder} are results about generic kinetic equations with integral diffusion, the upper bounds and the change of variables are two steps that are specific about the Boltzmann equation. We describe these two ingredients below. A more detailed description of the ideas in the proof of Theorem \ref{t:conditional-regularity_boltzmann} can be found in \cite{imbert-silvestre-survey2020}.

\subsection{The pointwise upper bounds}

The following theorem is proved in \cite{imbert-mouhot-silvestre-decay2020}.

\begin{thm} \label{t:upper-bounds}
Let $f$ be a (classical) solution to the inhomogeneous Boltzmann equation \eqref{e:boltzmann}, periodic in space, where the collision operator $Q$ has the form \eqref{e:bco} and $B$ is the standard non-cutoff collision kernel of the form \eqref{e:non-cutoff} with $\gamma+2s \in [0,2]$.

Assume that there are constants $m_0>0$, $M_0$, $E_0$ and $H_0$ such that the hydrodynamic bounds \eqref{e:mass_density_assumption}, \eqref{e:energy_density_assumption} and \eqref{e:entropy_density_assumption} hold. Then
\begin{itemize}
\item If $\gamma > 0$, for $t>0$ we get $f \leq C_q \langle v \rangle^{-q}$ for all $q>0$. This is a self-generated bound independent of the initial data.
\item If $\gamma \leq 0$, and we assume in addition $f_0 \leq A_q \langle v \rangle^{-q}$ for all $q>0$, then we have
$f_0 \leq C_q \langle v \rangle^{-q}$.
\end{itemize}
The constants $C_q$ depend on dimension, $\gamma$, $s$, $m_0$, $M_0$, $E_0$, $H_0$, and, in the case $\gamma \leq 0$, also on the constants $A_q$.
\end{thm}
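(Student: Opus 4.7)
The plan is to prove Theorem \ref{t:upper-bounds} by a maximum principle barrier argument based on the decomposition of the Boltzmann collision operator from Section \ref{s:ide} into an integral diffusion plus a lower order reaction term,
\[ Q(f,f)(v) = \int_{\R^d} (f(v')-f(v)) K_f(v,v') \dd v' \;+\; c\, f(v) (f\ast |\cdot|^\gamma)(v), \]
where $K_f$ satisfies the cone-of-nondegeneracy, upper-bound-on-average and cancellation conditions in terms of $m_0,M_0,E_0,H_0$ alone. For each $q > 0$ I introduce a barrier of the form $\bar f(t,v) = N_q(t) \langle v \rangle^{-q}$, with $N_q(t) = C_q(1 + t^{-\beta_q})$ in the self-generating case $\gamma > 0$ (so that $\bar f(0,\cdot) \equiv +\infty$ and the comparison with $f_0$ is automatic), and $N_q(t) = C_q (1+t)^{\kappa_q}$ in the case $\gamma \leq 0$, where $C_q$ is chosen large enough based on the hypothesis $f_0 \leq A_q \langle v \rangle^{-q}$.

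Assume for contradiction that $f > \bar f$ somewhere, and let $(t_0,x_0,v_0)$ be a first crossing point (a standard perturbation/compactness argument handles the fact that the crossing may only be attained as $|v| \to \infty$). At this point $\partial_t(f-\bar f) \geq 0$, $\nabla_v f = 0$, $f(v_0) = \bar f(v_0)$, and $f \leq \bar f$ pointwise in $v$; moreover the transport term $v \cdot \nabla_x f$ vanishes. Since $f \leq \bar f$ with equality at $v_0$, one has $L_{K_f} f(v_0) \leq L_{K_f} \bar f(v_0)$, and the equation gives
\[ \partial_t \bar f(t_0,v_0) \;\leq\; L_{K_f} \bar f(v_0) + c\, \bar f(v_0) (f \ast |\cdot|^\gamma)(v_0). \]
Two pointwise estimates close the argument. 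First, using conservation of mass and energy and, when $\gamma < 0$, the entropy bound to absorb the singularity at the origin of $|\cdot|^\gamma$, one shows $(f \ast |\cdot|^\gamma)(v_0) \leq C \langle v_0 \rangle^\gamma$. Second, using the thick cone of directions on which $K_f(v_0,\cdot) \gtrsim \lambda|v_0-\cdot|^{-d-2s}$, together with the polynomial decay of $\bar f$, one establishes the dissipation estimate
\[ L_{K_f} \bar f(v_0) \;\leq\; -c\lambda\, \langle v_0 \rangle^{\gamma+2s}\, \bar f(v_0) \qquad \text{for } |v_0| \text{ large.} \]
Combining gives $\partial_t \bar f(t_0,v_0) \leq (C\langle v_0 \rangle^\gamma - c\lambda\langle v_0 \rangle^{\gamma+2s}) \bar f(v_0)$, which is strictly negative for $|v_0|$ large, contradicting the chosen time-behavior of $N_q(t)$. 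For bounded $|v_0|$, a flat barrier $\bar f(t,v) = N(t)$ produces a separate, easier $L^\infty$ bound that kills the remaining range.

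The hard part will be the dissipation estimate on $L_{K_f}\bar f(v_0)$ for large $|v_0|$, because the ellipticity of $K_f$ degenerates anisotropically as $|v_0| \to \infty$. One must carefully choose the relevant scale $\sim \langle v_0 \rangle^{-1}$, verify that inside the cone of nondegeneracy the barrier drops by a definite factor at that scale, and convert this drop into the algebraic gain $\langle v_0 \rangle^{\gamma+2s}$. A related subtlety is that for $s \geq 1/2$ the operator $L_{K_f}\bar f$ must be interpreted as a principal value, which is where the cancellation estimates from Section \ref{s:ide} enter to justify the preceding inequalities. Finally, the $\gamma \leq 0$ case demands an entropy-based estimate on $(f\ast|\cdot|^\gamma)(v_0)$: one splits the convolution into a small-scale piece near $v_0$ (treated via an $L\log L$-type argument exchanging the singularity of $|\cdot|^\gamma$ against the entropy) and a tail (controlled by mass and energy), recovering the clean bound $C \langle v_0 \rangle^\gamma$.
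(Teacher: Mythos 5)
Your overall route is the same one the paper (and the reference it defers to, \cite{imbert-mouhot-silvestre-decay2020}) takes: a comparison argument at a first crossing point with barriers $N_q(t)\langle v\rangle^{-q}$, using the decomposition of $Q$ into the integro-differential part with kernel $K_f$ from \eqref{e:boltzmann-kernel} plus the cancellation-lemma term, and extracting the gain $\langle v_0\rangle^{\gamma+2s}$ from the cone of nondegeneracy. However, there is a genuine gap at the foundation of your scheme: the bounded-velocity range is not ``easier'' and is not handled by a flat barrier $\bar f(t,v)=N(t)$. For a barrier that is constant in $v$ the diffusion term vanishes identically, $L_{K_f}\bar f\equiv 0$, while the reaction term $c\,\bar f\,(f\ast|\cdot|^\gamma)\geq 0$ survives, so at a crossing point you get no sign to exploit; at best you obtain a Gr\"onwall-type bound growing in time, which would contaminate every $C_q$ and is incompatible with the uniform-in-time, self-generated character of the statement (and for $\gamma>0$ the reaction is not even uniformly controlled in $v$ by $N(t)$). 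The actual baseline $L^\infty$ bound, on which the decaying barriers lean for $|v_0|\leq R_0$ (by choosing $C_q\gtrsim \|f\|_{L^\infty}\langle R_0\rangle^{q}$), is the hardest ingredient and is proved in \cite{silvestre-boltzmann2016} by a different mechanism: at the contact point, the mass bound and Chebyshev's inequality force $f$ to lie below half its maximum on most of a ball of radius $r\sim f(v_0)^{-1/d}$, and the cone of nondegeneracy of $K_f$ converts this into a superlinear dissipation $L_{K_f}f(v_0)\lesssim -\lambda\, f(v_0)^{1+2s/d}$ (up to velocity weights), which is what beats the quadratic reaction term. Your sketch omits this mechanism entirely, and without it neither the $L^\infty$ step nor the claimed constants can be obtained.

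A second, smaller inaccuracy: for $\gamma<0$, mass, energy and entropy alone do \emph{not} give the pointwise bound $(f\ast|\cdot|^\gamma)(v_0)\leq C\langle v_0\rangle^{\gamma}$; an $L\log L$ function can have an infinite convolution with $|\cdot|^\gamma$ at a point (take $f(w)=|w|^{-d}(\log(1/|w|))^{-3}$ near the origin). What actually controls the local piece $\int_{B_1(v_0)}f\,|v_0-\cdot|^{\gamma}$ at the crossing point is the barrier itself (or the previously established $L^\infty$ bound), while the far piece is controlled by mass and energy using $\gamma\geq -2$. This is fixable within your argument, but the hypothesis doing the work is not the entropy, whose role in the proof is rather to guarantee the nondegeneracy (the cone condition) of $K_f$.
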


The proof of Theorem \ref{t:upper-bounds} is based on some type of nonlocal barrier argument. We argue by contradiction. We look at the first point where the function invalidates our pointwise upper bound, and somehow a contradiction is reached by carefully analyzing the equation at that point. Using similar techniques, we also obtain Maxwellian lower bounds in \cite{imbert-mouhot-silvestre-lowerbound2020}.

The first time a barrier argument was used in the context of the Boltzmann equation was in \cite{gamba2009} to obtain Maxwellian upper bounds in the case of the space homogeneous cutoff Boltzmann. See also \cite{taskovic,alonso2019} for related results for the space-homogeneous non-cutoff Boltzmann.

These pointwise upper bounds tell us that the solution $f$ decays as $|v| \to \infty$ faster than any algebraic rate. We may compare them with the classical moment estimates for the Boltzmann equation. Needless to say, pointwise estimates are a much stronger result than moment estimates. Theorem \ref{t:upper-bounds} implies in particular that for every $p>0$ there is a $C_p$ so that for all $(t,x) \in [0,\infty) \times \R^d$,
\[ \int_{\R^d} f(t,x,v) \langle v \rangle^p \dd v \leq C_p.\]
Moment estimates like these are well known in the space-homogeneous case. However, there is no direct proof in the space inhomogeneous case, other than as a consequence of the pointwise estimates.

\subsection{A change of variables to handle large velocities}

Theorems \ref{t:kinetic-DG-integral} and \ref{t:schauder} are used to obtain estimates in H\"older norms for the solution $f$ of the Boltzmann equation \eqref{e:boltzmann}. The hydrodynamic assumptions \eqref{e:mass_density_assumption}, \eqref{e:energy_density_assumption} and \eqref{e:entropy_density_assumption} are used to ensure that the kernel $K_f$ satisfies the appropriate nondegeneracy conditions. However, a careful analysis of the formula \eqref{e:boltzmann-kernel} reveals that while the three assumptions of Theorem \ref{t:kinetic-DG-integral} hold in any bounded set of velocities $v$, its parameters degenerate as $|v| \to \infty$. 

In order to obtain global estimates in H\"older spaces, we must figure out what happens to the kernels $K_f$ as $|v| \to \infty$. The following change of variables resolves this problem altogether.

For any value of $v_0 \notin B_1$, we define the linear transformation $T_{v_0}$ by the formula
\[
  T_{v_0}(a v_0 + w) := \frac{a}{|v_0|} v_0 + w \qquad \text{  whenever } w \perp v_0.
\]
Note that $T_{v_0}$ maps the unit ball $B_1$ into an ellipsoid that is flattened by the factor $1/|v_0|$ in the direction of $v_0$. If $v_0 \in B_1$, we simply take $T_{v_0}$ to be the identity. Given any $z_0 = (t_0,x_0,v_0)$, we further define
\[
  \mathcal T_{z_0} := z_0 \circ \left(|v_0|^{-\gamma-2s} t,
    |v_0|^{-\gamma-2s} T_{v_0} x, T_{v_0} v \right).
\]
Here, $\circ$ is the Galilean group operator in $\R^{1+2d}$. This transformation $\mathcal T_{z_0}$ maps $Q_1$ into certain neighborhood of $z_0$ in $\R^{2d+1}$.

Let $f$ be a solution to the non-cutoff Boltzmann equation. Consider $\bar f (z)=f (\mathcal T_{z_0} (z))$. This function $\bar f$ solves a modified equation in $Q_1$
\[ \partial_t \bar f + v \cdot \nabla_x \bar f - \int_{\R^d} (f(v') - f(v)) \bar K_f(t,x,v,v') \dd v' = \bar h,\]
where
\[ \bar h(z) = c |v_0|^{-\gamma-2s} \bar f(z) (f \ast_v |\cdot|^\gamma)(\mathcal T_{z_0} z),\]
and
\[ \bar K_f(t,x,v,v') = |v_0|^{-\gamma-2s} K(\mathcal T_{z_0} z, v_0 + T_{v_0} v').\]

Remarkably, the kernel $\bar K_f$ satisfies the assumptions of Theorem \ref{t:kinetic-DG-integral} and \ref{t:schauder} with parameters that are independent of $v_0$. Thus, by working out how the H\"older norms are affected by this linear change of variables, we have a recipe to compute the precise asymptotics as $|v| \to \infty$ of all our regularity estimates.

We make a precise statement in the following proposition.
\begin{prop} \label{p:change_of_variables}
Let $f : [0,T] \times \R^d \times \R^d \to [0,\infty)$ be a function that satisfies \eqref{e:mass_density_assumption}, \eqref{e:energy_density_assumption} and \eqref{e:entropy_density_assumption}. Let $K_f$ be the Boltzmann kernel that is obtained by the formula \eqref{e:boltzmann-kernel}, and $\bar K_f: Q_1 \times \R^d \to [0,\infty)$ be the modified kernel as above. Then, there are constants $\lambda>0$, $\Lambda$ and $\mu$, depending only on $m_0$, $M_0$, $E_0$ and $H_0$, but \textbf{not} on $v_0$, such that for all $(t,x,v) \in Q_1$
\begin{itemize}
\item For all $r>0$,
\[ \int_{B_{r}(v)} K(t,x,v,v') |v-v'|^2 \dd v' \leq \Lambda r^{2-2s}.\]
\item For any ball $B \subset \R^d$ that contains $v$, we have 
\[ |\{v' \in B : K(v,v') \geq \lambda |v-v'|^{-d-2s} \}| \geq \mu r^d. \]
\item For all $r \in [0,1]$,
\[ \left\vert \int_{B_r(v)} K(t,x,v,v') - K(t,x,v',v) \dd v' \right\vert \leq \Lambda r^{-2s}.\]
If $s \geq 1/2$, we also have
\[ \left\vert \int_{B_r(v)} (v-v') (K(t,x,v,v') - K(t,x,v',v)) \dd v' \right\vert \leq \Lambda r^{1-2s}.\]
\end{itemize}
\end{prop}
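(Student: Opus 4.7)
The plan is to derive each estimate for $\bar K_f$ from a corresponding non-uniform estimate for the original Boltzmann kernel $K_f$ by tracing how the change of variables $\mathcal{T}_{z_0}$ rescales the integrals involved. The transformation is designed to cancel two sources of degeneration of $K_f$ at large velocity. First, from \eqref{e:boltzmann-kernel-approx}, $K_f(v,v')$ is comparable to $|v-v_\ast|^{\gamma+2s+1}/|v-v'|^{d+2s}$, and when $v$ is near $v_0$ and the bulk of $f$ sits near the origin, the relevant $|v-v_\ast|$ is of order $|v_0|$, producing a factor $|v_0|^{\gamma+2s}$ that is absorbed by the prefactor $|v_0|^{-\gamma-2s}$ in the definition of $\bar K_f$. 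Second, the hyperplane $v + (v'-v)^\perp$ only crosses the bulk of the mass of $f$ when $v'-v$ is nearly parallel to $v_0$, that is, inside a cone of aperture of order $|v_0|^{-1}$; the linear map $T_{v_0}$, which dilates by $|v_0|$ in the $v_0$-direction while keeping transverse lengths, widens this cone to unit aperture.

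First I would record the underlying $\langle v \rangle$-dependent bounds for $K_f$. Starting from \eqref{e:boltzmann-kernel-approx} and the hydrodynamic controls \eqref{e:mass_density_assumption}--\eqref{e:entropy_density_assumption}, a computation carried out in \cite{imbert-silvestre-survey2020,silvestre-boltzmann2016,imbert-silvestre-whi2020} gives: an upper bound on average of the form $\int_{B_r(v)} K_f(v,v')|v-v'|^2 \dd v' \lesssim \langle v\rangle^{\gamma+2s} r^{2-2s}$; cancellation bounds with constants of order $\langle v\rangle^{\gamma+2s}$ and $\langle v\rangle^{\gamma+2s-1}$; and a cone-of-nondegeneracy condition stating that, for a set of directions of measure of order $\langle v\rangle^{-1}$ around $\pm v/|v|$, one has $K_f(v,v') \gtrsim \langle v\rangle^{\gamma+2s+1}|v-v'|^{-d-2s}$. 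The cancellation statements come directly from the cancellation lemma combined with \eqref{e:boltzmann-kernel}, while the cone condition requires the quantitative geometric argument based on mass, energy, and entropy recorded in those references.

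Next I would substitute these into $\bar K_f(t,x,v,v')=|v_0|^{-\gamma-2s} K_f(\mathcal{T}_{z_0} z, v_0 + T_{v_0}v')$. Since $v\in B_1$ is sent to $v_0 + T_{v_0}v$, whose norm is comparable to $|v_0|$, the $|v_0|^{-\gamma-2s}$ prefactor exactly cancels $\langle v\rangle^{\gamma+2s}$; meanwhile the Jacobian of $T_{v_0}$ is $|v_0|^{-1}$ and $|v-v'|$ becomes $|T_{v_0}(v-v')|$. Tracking these through the integrals, the upper bound on average and the two cancellation estimates follow with constants independent of $v_0$. The main obstacle is the cone condition: one must verify that the thin cone of aperture $|v_0|^{-1}$ around $\pm v_0/|v_0|$, on which the lower bound for $K_f$ holds, is carried by $T_{v_0}^{-1}$ to a cone of uniform aperture around $\pm v_0/|v_0|$, so that its intersection with any ball $B\subset \R^d$ containing $v$ still has relative measure bounded below by a constant $\mu$ independent of $v_0$. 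This is precisely what the anisotropic scaling of $T_{v_0}$ delivers: transverse to $v_0$ the map is the identity, so the aperture after pullback is the original aperture times $|v_0|$, which is of order one. Combined with the power count above and the comparison $|v-v'|^{-d-2s}$ versus $|T_{v_0}(v-v')|^{-d-2s}$ along this cone of useful directions, this yields the third inequality of the proposition uniformly in $v_0$.
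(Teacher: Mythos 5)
The paper does not actually prove Proposition \ref{p:change_of_variables} here; it defers to \cite{imbert2019global}, where the uniform bounds are obtained by estimating the transformed kernel directly from \eqref{e:boltzmann-kernel} in the anisotropic geometry created by $T_{v_0}$. Your outline is in the same spirit (weighted bounds for $K_f$ plus bookkeeping under $\mathcal T_{z_0}$), but as written it contains a false key input and skips the step that carries the real content. The geometric picture is backwards: the hyperplane $\{v+w : w \perp (v'-v)\}$ lies at distance $|v \cdot \widehat{(v'-v)}|$ from the origin, so for $|v| \approx |v_0|$ large it meets the bulk of the mass of $f$ exactly when $v'-v$ is nearly \emph{orthogonal} to $v_0$ (an equatorial slab of directions of angular width $\sim |v_0|^{-1}$), not nearly parallel to $v_0$. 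No lower bound $K_f(v,v') \gtrsim \langle v\rangle^{1+\gamma+2s}|v-v'|^{-d-2s}$ holds on a cone around $\pm v/|v|$ (with only mass, energy and entropy bounds, $f$ may vanish on the corresponding far-away hyperplanes), and this is not what \cite{silvestre-boltzmann2016,imbert-silvestre-whi2020} record. Likewise, $T_{v_0}$ \emph{compresses} by the factor $1/|v_0|$ along $v_0$ (consistent with the Jacobian $|v_0|^{-1}$ you quote); it does not dilate. Your two misstatements happen to cancel, since compression along $v_0$ pulls the thin equatorial slab back to a cone of uniform aperture, which is the correct conclusion; but with either statement taken at face value the argument collapses.

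More substantively, the uniform upper bound on average and the cancellation estimates do not ``follow by tracking the Jacobian'' from the isotropic weighted bound $\int_{B_r(v)} K_f(v,v')|v-v'|^2\,\dd v' \lesssim \langle v\rangle^{\gamma+2s} r^{2-2s}$. After the change of variables the weight becomes $|v-v'|^2 = |T_{v_0}^{-1}(\bar v - \bar v')|^2$, which exceeds $|\bar v - \bar v'|^2$ by a factor up to $|v_0|^2$ for increments close to the $\pm v_0$ direction, and the domain $B_r(v)$ becomes a thin ellipsoid; if one inserts only the isotropic bound, the count comes out larger than $\Lambda r^{2-2s}$ by a positive power of $|v_0|$. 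The bound is uniform only because $K_f(\bar v,\cdot)$ puts very little weight on those near-polar directions (their hyperplanes pass at distance $\gtrsim |v_0|$ from the origin, where the tail of mass is controlled by the energy bound), and quantifying this anisotropic decay---that is, bounding averages of $K_f$ against the ellipsoidal weight rather than over balls---is precisely the nontrivial computation carried out in \cite{imbert2019global}; the transformed cancellation bounds require the same kind of analysis. A smaller bookkeeping point: with the prefactor $|v_0|^{-\gamma-2s}$ as displayed in the survey and a separate Jacobian $|v_0|^{-1}$ from $\dd v'$, the cancellation of powers you invoke does not close; the factor $|v_0|^{-1-\gamma-2s}$ must multiply the kernel (as it does in \cite{imbert2019global}) for even the good equatorial directions to come out uniformly elliptic.
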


Proposition \ref{p:change_of_variables} tells us that the assumptions of Theorem \ref{t:kinetic-DG-integral} hold uniformly after the change of variables. A similar computation holds for the H\"older continuity assumption of Theorem \ref{t:schauder}. Effectively, Proposition \ref{p:change_of_variables} turns the local estimates of Theorems \ref{t:kinetic-DG-integral} and \ref{t:schauder} into global regularity estimates, with optimal asymptotics as $|v| \to \infty$.

Proposition \ref{p:change_of_variables} is proved in \cite{imbert2019global}. It is inspired by a similar change of variables for the Landau equation that first appeared in \cite{css-upperbounds-landau}.

\section{The homogeneous problem}

\label{s:space-homogeneous}

A simplified form of the Boltzmann or Landau equation \eqref{e:boltzmann} takes place when we consider solutions that are constant in the space variable $x$. In that case, the kinetic transport term disappears from the equation and we are left with
\begin{equation} \label{e:space-homogeneous}
f_t = Q(f,f).
\end{equation}

Here, $Q(f,f)$ may be the Boltzmann or Landau operators described in \eqref{e:bco} and \eqref{e:landau_expression1}. The equation takes the more classical form of quasilinear parabolic equations. Moreover, the conserved quantities become stronger, since they do not involve integration in $x$. More precisely, we have the following.
\begin{description}
  \item[Conservation of mass:] \[ \int_{\R^d} f(t,v) \dd v \qquad \text{ is constant in time.}\]
\item[Conservation of energy:] \[ \int_{\R^d} f(t,v) |v|^2 \dd v \qquad \text{ is constant in time.}\]
\item[Conservation of momentum:] \[ \int_{\R^d} f(t,v) v \dd v \qquad \text{ is constant in time.}\]
\item[Entropy:] \[ \int_{\R^d} f(t,v) \log f(t,v) \dd v \qquad \text{ is monotone decreasing.}\]
\end{description}

In particular, the hydrodynamic assumptions \eqref{e:mass_density_assumption}, \eqref{e:energy_density_assumption}, \eqref{e:entropy_density_assumption} are automatically true for any solution \footnote{We mean for \emph{classical} solutions with sufficient decay as $|v| \to \infty$. Some of these conservation laws might break if we work with an overly weak notion of solution.} of the space homogeneous problem whose initial data has finite mass, energy and entropy.

Establishing the existence of global smooth solutions of \eqref{e:space-homogeneous} does not face the same difficulties as in the space in-homogeneous case. Clearly, Theorems \ref{t:conditional-regularity_boltzmann} and \ref{t:conditional-regularity_landau} can be combined with any short-time well posedness result to produce a global-in-time solution. Anyhow, the global well posedness of the space homogeneous problem, at least for some range of the values $\gamma$ and $s$, can be proved directly and it was done earlier. An incomplete list of references would be \cite{desvillettes2000spatially,villani1998spatially,silvestre_landau} for the homogeneous Landau equation, and \cite{desvillettes1997,desvillettes2004,alexandre2005,huo2008,alexandre2009,desvillettes2009,morimoto2009,morimoto2010,chen2011,alexandre2012,he2012} for the homogeneous Boltzmann equation.

Regularity estimates for the space homogeneous problem face a strong limitation when $\gamma+2s<0$. This range is called the \emph{very soft potential} case. The Landau equation corresponds to $s=1$, thus very soft potentials are those with $\gamma < -2$. Note that the very soft potential range is excluded from Theorems \ref{t:conditional-regularity_boltzmann} and \ref{t:conditional-regularity_landau}.The problem is that, even in the space homogeneous scenario, we do not know how to obtain an a priori estimate for the solution in $L^\infty$ when $\gamma+2s < 0$. We explore this major open problem in the rest of this section.

In order to understand the difficulties with very soft potentials, it is best to focus on the most extreme case: the Landau-Coulomb equation. It corresponds to $s=1$ and $\gamma=-3$ in three dimensions. It is a model for plasma dynamics, what makes it particularly interesting as a natural equation from mathematical physics. The operator $Q(f,f)$ takes the following form \eqref{e:landau-coulomb}, which we recall here.
\begin{equation} \label{e:landau-coulomb2}
Q(f,f) = \bar a_{ij} \partial_{ij}  f + f^2,
\end{equation}
where 
\[ \bar a_{ij}(v) = -\partial_{ij} (-\Delta)^{-2} f(v) = \frac 1 {8\pi} \int_{\R^3} (|w|^2 \delta_{ij} - w_i w_j) |w|^{-3} f(v-w) \dd w.\]

Some lower bound on the coefficients is computed in terms of the mass, energy and entropy of $f$. Let us state it for a general power $\gamma$ in the next lemma.

\begin{lemma} \label{l:landau-coefficients}
Let $f : \R^d \to [0,\infty)$ be a function with finite mass, energy and entropy. Assume that
\begin{align*}
\int_{\R^d} f(v) \dd v &\geq m_0, \\
\int_{\R^d} f(v) |v|^2 \dd v &\leq E_0, \\
\int_{\R^d} f(v) \log f(v) \dd v &\leq H_0.
\end{align*}
Consider the coefficients $\bar a_{ij}$ given by \eqref{e:landau} for any $\gamma \in (-d-2,0]$. That is
\[ \bar a_{ij}(v) = \int_{\R^d} (|w|^2 \delta_{ij} - w_i w_j) |w|^{\gamma} f(v-w) \dd w.\]
Then, we have the following lower bounds,
\[ \bar a_{ij}(v) e_i e_j \geq c \begin{cases}
\langle v \rangle^\gamma & \text{for any } e \in S^{d-1}, \\
\langle v \rangle^{\gamma+2} & \text{if } e \cdot v = 0.
\end{cases}
\]
Here, the constand $c$ depends on $\gamma$, dimension, $m_0$, $E_0$ and $H_0$ only.
\end{lemma}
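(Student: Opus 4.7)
The starting point is the algebraic identity
\[ (|w|^2 \delta_{ij} - w_i w_j)\, e_i e_j \;=\; |w|^2 - (w \cdot e)^2 \;=\; |P_e w|^2, \]
where $P_e := I - e \otimes e$ denotes the orthogonal projection onto $e^\perp$. Substituting into the formula for $\bar a_{ij}(v)$ and changing variables $u = v - w$ rewrites the quadratic form as
\[ \bar a_{ij}(v)\, e_i e_j \;=\; \int_{\R^d} |v-u|^{\gamma}\, |P_e(v-u)|^2\, f(u) \dd u. \]
Since the integrand is nonnegative, the whole game reduces to exhibiting a subset of $\R^d$ on which $f$ is quantitatively bounded below and on which the geometric weight $|v-u|^\gamma |P_e(v-u)|^2$ has the claimed size, uniformly in $e$.

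The first step is to extract a \emph{clean set} from the hydrodynamic controls: there exist constants $R_0, \ell, \delta > 0$, depending only on $m_0, E_0, H_0$ and dimension, and a set $A \subset B_{R_0}$ with $|A| \geq \delta$ and $f \geq \ell$ on $A$. This is a classical three-line argument: Chebyshev against the energy bound gives $\int_{B_{R_0}} f \geq m_0/2$ for $R_0 := (2E_0/m_0)^{1/2}$; the entropy bound, combined with the pointwise minoration $f \log f \geq -|v|^2 f/2 - e^{-|v|^2/2}$ which forces $\int f \log f \geq -E_0/2 - C_d$, prevents the mass from sitting in the tail, yielding $\int_{B_{R_0} \cap \{f \geq K\}} f \leq m_0/8$ for a large enough threshold $K$; stripping off the thin region $\{f < \ell\}$ with $\ell \asymp m_0/|B_{R_0}|$ then leaves $A := B_{R_0} \cap \{\ell \leq f < K\}$ with the stated properties.

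With $A$ in hand, assume $|v| \geq 4 R_0$; the range $|v| \leq 4R_0$ is handled by the same argument with $\langle v \rangle$ bounded, using that $|v-u|^\gamma |P_e(v-u)|^2 \leq |v-u|^{\gamma+2}$ is locally integrable for $\gamma > -d-2$. For $u \in A$ one has $|v|/2 \leq |v-u| \leq 2|v|$ and hence $|v-u|^\gamma \geq 2^\gamma |v|^\gamma$ because $\gamma \leq 0$. In the perpendicular case $e \cdot v = 0$, we have $P_e v = v$ and $|P_e u| \leq R_0 \leq |v|/2$, so $|P_e(v-u)| \geq |v|/2$, and integrating against $f \geq \ell$ over $A$ yields $\bar a_{ij}(v)\, e_i e_j \gtrsim \ell\,\delta\, |v|^{\gamma+2}$, which is the second bound. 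For arbitrary $e \in S^{d-1}$, let $\bar u := |A|^{-1} \int_A u \dd u$ and expand
\[ \int_A |P_e(v-u)|^2 \dd u \;=\; |A|\,|P_e(v-\bar u)|^2 \,+\, \int_A |P_e(u-\bar u)|^2 \dd u \;\geq\; \int_A |P_e(u-\bar u)|^2 \dd u. \]
The remaining quantity is bounded below uniformly in $e$ because the $\varepsilon$-tube around any line through $\bar u$ intersected with $B_{2R_0}(\bar u)$ has volume at most $C R_0 \varepsilon^{d-1}$; choosing $\varepsilon \asymp (\delta/R_0)^{1/(d-1)}$ leaves a subset of $A$ of measure $\geq \delta/2$ outside the tube, where $|P_e(u-\bar u)| \geq \varepsilon$. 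This gives $\int_A |P_e(v-u)|^2 \dd u \geq c(d, \delta, R_0) > 0$ independently of $v$ and $e$, and hence $\bar a_{ij}(v)\, e_i e_j \gtrsim \ell\,|v|^\gamma \gtrsim \langle v \rangle^\gamma$. The main obstacle is precisely this direction-uniform ``variance'' lower bound on $A$: one must exploit that a set of positive Lebesgue measure in $\R^d$ cannot be simultaneously squeezed into a thin tube around \emph{every} line through its barycenter.
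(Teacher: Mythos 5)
Your argument is correct: the clean-set extraction from the mass, energy and entropy bounds, the reduction of the quadratic form to $\int |v-u|^{\gamma}|P_e(v-u)|^2 f(u)\,\dd u$, and the direction-uniform tube (anti-concentration) estimate together give exactly the claimed $\langle v\rangle^{\gamma}$ and $\langle v\rangle^{\gamma+2}$ lower bounds with constants depending only on $\gamma$, $d$, $m_0$, $E_0$, $H_0$. The paper does not reproduce a proof but defers to Lemma 3.1 of \cite{silvestre_landau}, whose argument is essentially the same as yours, so no further comparison is needed (the aside about local integrability of $|v-u|^{\gamma+2}$ is unnecessary for the lower bound in the bounded-velocity regime, where your tube argument already suffices).
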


The proof of Lemma \ref{l:landau-coefficients} can be found in \cite{silvestre_landau} (see the proof of Lemma 3.1 inside that paper). See \cite[Lemma 2.1]{css-upperbounds-landau} for the corresponding upper bounds for $\gamma \in [-2,0]$.

If $\gamma < -2$, there is no pointwise upper bound for $\bar a_{ij}$ in terms of the mass, energy and entropy of $f$ only. We may see this fact as a hint that analyzing the very soft potential case $\gamma<-2$ will be delicate. In order to obtain a pointwise upper bounds for the coefficients $\bar a_{ij}$, we would need to have some extra information about the function $f$. For example, it would suffice to have an upper bound of its weighted $L^p$ norm
\[ \|f\|_{L^p_k(\R^d)} = \left( \int |f(v)|^p \langle v \rangle^k \dd v \right)^{1/p} \leq K.\]
In that case, if $p > d/(d+2+\gamma)$ and $k$ is sufficiently large (depending on $p$, $\gamma$ and dimension), the upper bounds for the coefficients $\bar a_{ij}$ would differ from the lower bounds by a constant factor. A conditional a priori estimate for the solution to \eqref{e:space-homogeneous} follows in that case (see \cite{silvestre_landau}).

The bounds for the coefficients $\bar a_{ij}$ might suggest a comparison between the homogeneous Landau-Coulomb equation and the nonlinear heat equation $f_t = \Delta f + f^2$. Since the latter blows up in finite time, one may be inclined to believe that the Landau-Coulomb equation also develops singularities. It is an example of how we may arrive to the wrong conclusions when we think of the collision operator $Q(f,f)$ as if it was semilinear \footnote{It would be equally misleading to think of the Boltzmann collision operator as the fractional Laplacian plus lower order terms.}. The coefficients $\bar a_{ij}$ are not bounded above for $\gamma<-2$. Intuitively, the coefficients $\bar a_{ij}$ will become large around the areas where the mass of $f$ is accumulating. This enhanced dissipation seems to prevent blowup altogether. Nowadays, the consensus among researchers in the area is that everybody expects the homogeneous Landau-Coulomb equation to have global smooth solutions for any reasonable initial data.

There are two positive, unconditional, results about the Landau-Coulomb equation that provide some partial progress in the problem. They are the following.

\begin{enumerate}
  \item It is shown in \cite{desvillettes2014entropy} that we can get a bound for $\|f\|_{L^1_t L^3_{-k}}$, for some suitable exponent $k$, using the entropy dissipation.
  \item In \cite{golse2019partial}, the authors show that the potential set of times where a weak solution blows up has Hausdorff dimension at most $1/2$.
\end{enumerate}

There are also conditional regularity estimates for the homogeneous Landau equation, which say that the solution will not blow up unless certain quantities become infinite. Results of that kind can be found in \cite{silvestre_landau,gualdani2016,gualdani2019}.

\subsection{The Krieger-Strain model}

A simplified toy model that captures the properties and key difficulties of the Landau-Coulomb operator was proposed by Krieger and Strain in \cite{krieger2012a}. The idea is to replace the Landau-Coulomb operator \eqref{e:landau-coulomb2} with
\[ Q(f,f) = (-\Delta)^{-1} f(v) \cdot \Delta f(v) + f(v)^2.\]
Solutions to the equation \eqref{e:space-homogeneous} with this new operator $Q$ still conserve mass and dissipate entropy, but they do not conserve energy. Note also that the Maxwellian functions are no longer stationary solutions. In fact, the only function $f$ so that $Q(f,f)=0$ is $f \equiv 0$.

We can modify the operator further by introducing a parameter $\alpha \in [0,1]$ and letting
\[ Q_\alpha(f,f) = (-\Delta)^{-1} f(v) \cdot \Delta f(v) + \alpha f(v)^2.\]
The idea is that by setting $\alpha<1$, we are making the dissipation term stronger than the reaction term. In \cite{krieger2012a}, the authors study the Cauchy problem for \eqref{e:space-homogeneous} with $Q_\alpha$, and establish the existence of global smooth solutions for radially symmetric, positive and monotone initial data, and $\alpha < 2/3$. Soon after, in \cite{krieger2012b}, the result is extended to the range $\alpha \in (0,74/75)$. The milestone $\alpha=1$ is achieved in \cite{gualdani2016}, also for symmetric, positive and radial initial data.

Note that all the regularity estimates for the Krieger-Strain equation in the current literature are limited to radially symmetric solutions.

\subsection{Toward global smooth solutions}

We mentioned above that we believe that the space-homogeneous equation \eqref{e:space-homogeneous}, with the Landau-Coulomb operator \eqref{e:landau-coulomb2} has global smooth solutions for any reasonably smooth initial data that decays quickly as $|v| \to \infty$. But how could we ever possibly prove it? In this subsection, we outline some potential ideas to attack this problem. This is a purely speculative section. We state two conjectures. We also state two lemmas, with complete proofs, that support possible directions to study Conjectures \ref{c:Linfty-growth} and \ref{c:liouville}.

If we had an upper bound in $L^\infty$ for the solution $f$ of \eqref{e:space-homogeneous}, further regularity would follow by a standard application of the Schauder estimates. The key difficulty is to determine, given a (classical) solution $f : [0,T] \times \R^d \to [0,\infty)$ of \eqref{e:space-homogeneous}, whether $\|f(t,\cdot)\|_{L^\infty}$ may blow up at time $T$ or not.

Given mass, momentum and energy, the Maxwellian distribution is the function that minimizes the entropy. It is also easy to verify that the entropy dissipation of a nonnegative function $f$ is zero if and only if $f$ is a Maxwellian. Thus, a global solution to \eqref{e:space-homogeneous} will naturally converge to the unique Maxwellian function $M(v)$ whose mass, momentum and energy coincide with those of the initial data $f_0$. Before the limit $t \to \infty$ can effectively take place, we need to make sure that no singularity emerges from the flow of the equation. If we had a solution $f : [0,T] \times \R^d \to [0,\infty)$ so that $\lim_{t \to T} f(t,x_t) = +\infty$, along some curve $x_t$ we should also have the $L^p$ norm of $f$ blowing up around the points $x_t$, for any $p > 3/2$. In that case, it is reasonable to expect some separation of scales: to have some local blow-up profile at a small scale, separated from the bulk of the mass. The Landau-Coulomb operator \eqref{e:landau-coulomb2} is nonlocal only though the formula of the coefficients $\bar a_{ij}$. The values of $f(t,y)$, for $y$ far from $x_t$, would only influence the equation around $x_t$ by enhancing its dissipation. Intuitively, more dissipation reduces the chances of a blow-up. The more negative $\gamma$ is in \eqref{e:landau}, the more localized the formula for $\bar a_{ij}$ is in terms of $f$. In the very soft potential case, and in particular for Landau-Coulomb, we might expect that the local blow-up profile solves its own Landau equation \eqref{e:landau}, but at accelerated time scale. We will do a more precise blow-up analysis below, that justifies this intuition in part. If the local blow-up profile also solves \eqref{e:space-homogeneous}, then it should also converge to a Maxwellian. The mass, momentum and energy of the local blow-up profile are completely uncorrelated with the global function $f$ (and we will see that they may even be infinite). The following proposition shows an interesting inequality that is independent of their values.

\begin{prop} \label{p:Linfty-growth}
Let $m>0$, $q \in \R^d$ and $e > 0$ be given. Let $M : \R^d \to [0,\infty)$ be the Maxwellian distribution so that
\[ m = \int_{\R^d} M(v) \dd v, \qquad  q = \int_{\R^d} M(v) v \dd v, \qquad e = \int_{\R^d} M(v) |v|^2 \dd v.\]
For any nonnegative function $f \in L^1_2(\R^d)$ so that
\[ m = \int_{\R^d} f(v) \dd v, \qquad  q = \int_{\R^d} f(v) v \dd v, \qquad e = \int_{\R^d} f(v) |v|^2 \dd v,\]
we have \[ \frac{\|M\|_{L^\infty(\R^d)}} {\|f\|_{L^\infty(\R^d)}} \leq C_d,\]
for some constant $C_d$ depending on dimension only. Moreover, the equality is achieved if and only if $f$ is the characteristic function of some ball.
\end{prop}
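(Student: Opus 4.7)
The plan is to reduce the statement to two elementary explicit calculations: compute $\|M\|_\infty$ directly, and find the sharp lower bound for $\|f\|_\infty$ among admissible $f$ using the bathtub principle. Dividing the two produces the dimensional constant $C_d$.

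I would first translate by replacing $v$ with $v + q/m$, normalizing both $f$ and $M$ to have zero momentum. This preserves the two $L^\infty$ norms and the mass, and shifts the energy to $e' := e - |q|^2/m$, which is strictly positive by Cauchy--Schwarz (with equality only for a Dirac mass). After this reduction I may assume $q = 0$ and write $e$ for $e'$. The Maxwellian $M(v) = a e^{-b|v|^2}$ is then determined uniquely by the two scalar equations $\int M = m$ and $\int |v|^2 M = e$, which are standard Gaussian integrals giving $b = dm/(2e)$ and
\[ \|M\|_\infty = a = \left(\frac{d}{2\pi}\right)^{d/2} m^{(d+2)/2} e^{-d/2}. \]

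Next I would use the bathtub principle to bound $\|f\|_\infty$ from below. Setting $L := \|f\|_\infty$, among all measurable $g$ with $0 \leq g \leq L$ and $\int g = m$, the functional $\int |v|^2 g \dd v$ is minimized uniquely (up to null sets) by $g = L \mathds 1_{B_r}$, with $B_r$ the ball centered at $0$ of volume $m/L$. An explicit polar-coordinate computation gives this minimum as $\frac{d}{d+2}\,\omega_d^{-2/d} m^{(d+2)/d} L^{-2/d}$, where $\omega_d = |B_1|$. Since $f$ itself is admissible with $\int |v|^2 f = e$, this minimum cannot exceed $e$; rearranging yields
\[ L \geq \left(\frac{d}{d+2}\right)^{d/2} \omega_d^{-1} m^{(d+2)/2} e^{-d/2}. \]

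Dividing the two explicit expressions produces the ratio bound $\|M\|_\infty / \|f\|_\infty \leq C_d$ with $C_d = \omega_d \bigl((d+2)/(2\pi)\bigr)^{d/2}$, a constant depending on dimension only. Equality in the bathtub principle forces $f = L \mathds 1_{B_r}$ with $B_r$ centered at $0$; undoing the initial translation, equality holds precisely when $f$ is a positive scalar multiple of the characteristic function of a ball $B_r(q/m)$, with the radius uniquely pinned down by the prescribed moments. There is no substantive obstacle in this argument: it is a one-step invocation of the bathtub principle plus routine bookkeeping of dimensional constants.
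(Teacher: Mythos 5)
Your proposal is correct and follows essentially the same route as the paper: the paper normalizes $m=1$, $q=0$, $\|f\|_{L^\infty}=1$, reduces to minimizing the energy among densities bounded by one with fixed mass, and proves the minimizer is the characteristic function of a ball by exactly the comparison argument that underlies the bathtub principle you invoke. Your explicit computation of the constants is consistent with the paper (your $C_d=\omega_d\bigl((d+2)/(2\pi)\bigr)^{d/2}$ gives $C_3=\sqrt{250/(9\pi)}$), so the only difference is bookkeeping by explicit formulas rather than by scaling normalization.
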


We can compute the constant $C_d$ explicitly for each dimension. In three dimensions, we have
\[ C_3 = \sqrt{\frac{250}{9 \pi}}.\]

\begin{proof}
By translating and scaling the functions $f$ and $M$ if necessary, we assume without loss of generality that $q=0$, $m=1$ and $\|f\|_{L^\infty}=1$. We analyze, under these conditions, what the maximum possible value of $\|M\|_{L^\infty}$ is.

The Maxwellian $M$ is uniquely determined by the values of $m$, $q$ and $e$. In this case, after fixing $q=0$ and $m=1$, we observe that $\|M\|_{L^\infty}$ will be maximal if $e$ takes the minimum possible value. Indeed, if $M_1$ is the Maxwellian with $q=0$, $m=1$, and $e=1$, we scale it to recover a Maxwellian $M$ with any arbitrary energy $e$ by
\[ M(v) = e^{-d/2} M_1(e^{-1/2} v).\]
Thus, the problem is reduced to minimize the energy of the function $f$ constrained to $0 \leq f \leq 1$, having unit mass and zero momentum. 

Let $B_R$ be the ball centerd at the origin with volume one, and $\chi(v)$ be its characteristic function. We claim that the energy of $f$ is always larger or equal to the energy of $\chi$. We compute
\begin{align*}
\int f(v) |v|^2 \dd v  - \int \chi(v) |v|^2 \dd v &= \int_{B_R} (f(v)-\chi(v)) |v|^2 \dd v + \int_{\R^d \setminus B_R} (f(v)-\chi(v)) |v|^2 \dd v\\
\intertext{Since $f$ takes values in $[0,1]$, then $f(v)-\chi(v)$ is nonpositive in $B_R$ and nonnegative in $\R^d \setminus B_R$. In both cases, we obtain an inequality in the same direction by replacing $|v|^2$ with $R^2$.}
&\leq \int_{B_R} (f(v)-\chi(v)) R^2 \dd v + \int_{\R^d \setminus B_R} (f(v)-\chi(v)) R^2 \dd v \\
&= R^2 \int_{\R^d} (f(v)-\chi(v)) \dd v = 0
\end{align*}
The last equality is a consequence of $f$ and $\chi$ having the same mass.

The inequality will be strict unless $f = \chi$ a.e.
\end{proof}

Proposition \ref{p:Linfty-growth} says that the ratio between the maximum of the initial data $f_0$, and the maximum of the asymptotic limit of $f(t,\cdot)$ as $t \to \infty$, are bounded by a universal constant depending on dimension only. Our intuition of separation of scales suggests that this ratio is the most the $L^\infty$ norm of a function should be able to grow by the evolution of the equation. We formulate the following bold conjecture.
\begin{conjecture} \label{c:Linfty-growth}
Let $f: [0,T] \times \R^3 \to [0,\infty)$ be a classical solution of \eqref{e:space-homogeneous}, where $Q$ is the Landau-Coulomb operator \eqref{e:landau-coulomb2}, and $f(0,v) = f_0(v)$. Then, for all values of $(t,v) \in [0,T] \times \R^3$, we have the inequality
\[ f(t,v) \leq \sqrt{\frac{250}{9 \pi}} \max f_0(v).\]
\end{conjecture}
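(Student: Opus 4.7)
I would argue by contradiction, exploiting the scale invariance of the Landau--Coulomb equation in dimension three. The transformation $\tilde f(\tau,w) := \lambda f(\lambda \tau, \lambda w)$ preserves \eqref{e:space-homogeneous} with $Q$ as in \eqref{e:landau-coulomb2}; this is exactly what makes $\gamma=-3$, $d=3$ the scale-critical case. Suppose the conjecture fails. Then there exists a first time $t_* \in (0,T]$ and a point $v_*$ where $f(t_*,v_*) = C_3 \|f_0\|_\infty$ and the bound is violated immediately after. Since this is an interior spatial maximum of $f(t_*,\cdot)$, one has $\nabla_v f(t_*,v_*)=0$ and $D_v^2 f(t_*,v_*) \le 0$, and the naive computation at the maximum yields only $\partial_t f(t_*,v_*) \leq f(t_*,v_*)^2$. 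This is not enough; the whole point of the argument must be to extract information from the discarded diffusive contribution $\bar a_{ij}\partial_{ij} f$.

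The natural next step is to blow up around $(t_*,v_*)$ with $\lambda_n \to 0$ and apply the regularity machinery of Section~5 (De~Giorgi--Nash--Moser in kinetic form together with the Schauder estimate for second-order kinetic operators) to extract a limit $\tilde f_\infty$, an ancient solution of Landau--Coulomb on $(-\infty,0] \times \R^3$ still saturating the bound at the origin. A strong maximum principle argument for the linear parabolic equation solved by $C_3\|f_0\|_\infty - \tilde f_\infty$ should force $\tilde f_\infty$ to be stationary, hence a Maxwellian by the classification of equilibria. A refined version of Proposition~\ref{p:Linfty-growth}, together with the rigidity that equality is achieved only by characteristic functions of balls (which are not smooth and cannot be stationary solutions), should then yield $\|\tilde f_\infty\|_\infty < C_3\|f_0\|_\infty$ and produce the contradiction.

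\textbf{Main obstacle.} The essential difficulty is the scale-criticality itself. Under the rescaling above, the mass of the blow-up profile is $\lambda^{-2}$ times that of $f$, and the energy diverges even faster, so none of the parameters $m_0,E_0,H_0$ controlling Lemma~\ref{l:landau-coefficients} and Proposition~\ref{p:Linfty-growth} pass to the limit in a usable way. The limit $\tilde f_\infty$ may carry infinite mass, and the Maxwellian it should converge to at $\tau=-\infty$ could be degenerate, for instance identically constant. A rigorous argument would likely require either a localized version of Proposition~\ref{p:Linfty-growth} that is compatible with this rescaling, or the identification of a monotone Lyapunov functional directly along the flow, for example something of the form $\Phi(f):=\|f\|_\infty \cdot T(f)^{d/2}/m(f)$ where $T(f)$ and $m(f)$ are the temperature and mass of the Maxwellian with the same moments as $f$. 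The absence of any such object is essentially the same difficulty that keeps the global regularity of the space-homogeneous Landau--Coulomb equation open, which is the reason I expect Conjecture~\ref{c:Linfty-growth} to be genuinely hard.
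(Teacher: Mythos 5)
The statement you have attempted is stated in the paper as a conjecture, not a theorem: the paper offers no proof, only a heuristic programme, and that programme is essentially the one you outline. The paper rescales around a hypothetical blow-up, uses Lemma \ref{l:coefficients_comparable} plus standard parabolic (Krylov--Safonov and Schauder) estimates to get compactness, passes to a bounded ancient solution with only $L^1_{-1}$ control, and then needs a rigidity statement saying such ancient solutions are stationary Maxwellians; that rigidity is isolated as Conjecture \ref{c:liouville} and is explicitly described as out of reach with current techniques. The obstruction you name in your final paragraph --- that the critical scaling destroys all control of mass, energy and entropy, so no moment-based functional passes to the limit --- is exactly this open Liouville problem, so your proposal is an attack plan sharing the paper's route rather than a proof, and it cannot be completed without resolving that open question.

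Beyond the gap you acknowledge, several steps of your sketch would fail as written. First, the ``first time the bound is violated'' framing cannot produce a contradiction at the touching point: the conjectured constant $\sqrt{250/(9\pi)}\approx 2.97$ exceeds $1$ and the reaction term $f^2$ is positive at a spatial maximum, so the inequality $\partial_t f(t_*,v_*)\le f(t_*,v_*)^2$ permits further growth and the level $C_3\max f_0$ is not a barrier; any contradiction must come from the long-time relaxation of the rescaled profiles, not from a pointwise argument at $t_*$. Second, the strong maximum principle applied to $g=C_3\|f_0\|_{\infty}-\tilde f_\infty$ gives no rigidity, since $g$ satisfies $\partial_t g-\bar a_{ij}\partial_{ij}g=-\tilde f_\infty^2\le 0$ and is therefore a subsolution; a nonnegative subsolution may attain an interior zero, and forcing the ancient limit to be stationary is precisely Conjecture \ref{c:liouville}, not a maximum-principle consequence. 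Third, in the paper's blow-up computation the limiting coefficients only dominate (by Fatou) the Landau--Coulomb coefficients of the limit, so $\tilde f_\infty$ need not solve \eqref{e:space-homogeneous} with \eqref{e:landau-coulomb2} exactly but only an equation with enhanced diffusion, a wrinkle any rigidity argument must tolerate and which your sketch does not address. Finally, the intended contradiction in the paper comes from the quantitative non-constancy of the limit in time, as in \eqref{e:blowup-nonconstant}, rather than from the equality case of Proposition \ref{p:Linfty-growth}; your unquantified strict-inequality step would not by itself exclude the limit from saturating the bound.
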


Conjecture \ref{c:Linfty-growth}, if true, rules out the finite time blow-up for solutions to \eqref{e:space-homogeneous} in the Landau-Coulomb case. Moreover, since the inequality is independent of the initial mass, momentum and energy, it suggests that the required analysis should be oblivious of these conserved quantities.

While we formulated Conjecture \ref{c:Linfty-growth} for the Landau-Coulomb case, similar intuition applies for solutions to \eqref{e:space-homogeneous} where $Q$ is the Boltzmann or Landau operator in the soft potential case. The intuition is less clear for hard potentials, but even in that case there is no apparent counterexample to rule out this inequality.

The analysis of blow-up limits is a common tool in the analysis of regularity of solutions across partial differential equations. Let us describe some attempt to apply that logic to the homogeneous Landau-Coulomb equation. Our objective is to prove that no solution to the Landau-Coulomb equation may blow up in finite time. Let us consider the simpler scenario of a radially symmetric solution that is monotone decreasing along the radial direction. It is a class of functions that is preserved by the evolution, and they can only possibly blow up at the origin.

We argue by contradiction. Let us suppose that the radially symmetric, monotone solution, $f$ blows up at time $T$. That is, we have a function
$f: [0,T] \times \R^3 \to [0,\infty)$, which is smooth for $t<T$ and solves \eqref{e:space-homogeneous}, where $Q$ is the Landau-Coulomb operator \eqref{e:landau-coulomb2}. We assume that $f$ depends only on $t$ and $|v|$ (it is radially symmetric), $v \cdot \nabla_v f \leq 0$ and therefore $\max f(t,\cdot) = f(t,0)$. Since it is blowing up at time $T$, we have $f(t,0) \to +\infty$ as $t \to T$. Ideally, we would want to extract a blow-up limit profile and find some contradiction.

Since $f$ attains it maximum always at $v=0$, we must have $D^2f(t,0) \leq 0$. Thus, $f_t(t,0) = Q(f,f)(t,0) \leq f(t,0)^2$. We deduce a minimum blowup rate: if $f$ is blowing up at time $t=T$, then $f(t,0) \geq (T-t)^{-1}$.

Let $t_k \to T$ be such that
\[ f(t_k,0) = \sup \{ f(t,v) : t\leq t_k, v \in \R^d\},\]
and, for some arbitrary $\eps>0$,
\begin{equation} \label{e:blowup-rate}
\left(\frac 12 + \eps \right) f(t_k,0) \geq f(t_k-f(t_k,0)^{-1},0) .
\end{equation}
The reason why we can find a sequence satisfying the second inequality is precisely the blowup rate $f(t,0) \geq (T-t)^{-1}$.

We consider the rescaled solutions
\[ f_k(t,v) = a_k^{-1} f(t_k + a_k^{-1} t, b_k v).\]
We choose $a_k = f(t_k, 0)$ and $b_k$ so that $(-\Delta_v)^{-1} f_k(0,0) = 1$. by construction, the functions $f_k$ solve the homogeneous Landau-Coulomb equation in $(-a_k t_k,0] \times \R^d$. Moreover, $0 \leq f_k \leq 1$ in the same domain, $f_k(0,0) = 1$, and $(-\Delta_v)^{-1}f_k(0,0) = 1$. Also, \eqref{e:blowup-rate} implies that
\begin{equation} \label{e:blowup-nonconstant}
f_k(-1,0) \leq \left(\frac 12 + \eps \right).
\end{equation}

Since we are working with a radially symmetric function $f$, the coefficients $\bar a_{ij}$ are isotropic at $v=0$. We have $\{\bar a_{ij}(t,0)\} = \frac 13 (-\Delta)^{-1} f(t,0) \I$. The same can be said about the rescaled functions $f_k$. The purpose of the normalization $(-\Delta)^{-1} f_k(0,0)=1$ is so that we fix the corresponding coefficients $\{\bar a_{ij}^k(0,0)\} = \frac 13 \I$.

We would like to pass to the limit as $k \to \infty$ and recover an ancient solution. We have $t_k \to T$ and $a_k \to \infty$, so the domain of the functions will grow to $(-\infty,0] \times \R^3$ as $k \to \infty$.

The following lemma says that if a radially symmetric solution of the Landau-Coulomb equation is bounded by one, and its coefficients are bounded below and above at one point, then we can obtain a bound for the coefficients in any bounded domain.

\begin{lemma} \label{l:coefficients_comparable}
Let $f : [-T,0] \times \R^3 \to [0,1]$ be a classical, radially symmetric, solution of \eqref{e:space-homogeneous}, where $Q$ is the Landau-Coulomb operator \eqref{e:landau-coulomb2}. Assume that $c_0 \leq (-\Delta_v)^{-1} f(0,0) \leq 1$ for some constant $c_0 > 0$. Then, for any values of $R>0$ and $T>0$, there are constants $C_1 \geq c_1 > 0$ so that for $(t,v) \in [-T,0] \times B_R$,
\[ c_1 \I \leq \{ \bar a_{ij}(t,v) \} \leq C_1 \I.\]
The constants $c_1$ and $C_1$ depend on $c_0$, $R$ and $T$ only.
\end{lemma}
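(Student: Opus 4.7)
The strategy is to reduce both bounds to a two-sided control of the single scalar $g(t,v):=(-\Delta_v)^{-1}f(t,v)$ evaluated at the origin. Indeed $\{\bar a_{ij}\}$ is a nonnegative symmetric matrix whose trace equals $g$, so $\{\bar a_{ij}(t,v)\}\le g(t,v)\,I$ in the sense of quadratic forms; and for any nonnegative radial $f$ the Newtonian potential $g(t,\cdot)$ is radially non-increasing, so $g(t,v)\le g(t,0)$ on all of $\R^3$. For the lower bound at the origin, radial symmetry makes $\{\bar a_{ij}\}$ isotropic at $v=0$: $\bar a_{ij}(t,0)=\tfrac{1}{3}g(t,0)\delta_{ij}$. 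Both statements of the lemma will therefore follow once we establish a two-sided bound $c(c_0,T)\le g(t,0)\le C(T)$ on $[-T,0]$ and extend the lower bound away from $v=0$.

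The two-sided bound on $g(t,0)$ is obtained by a Gronwall argument. Differentiating under the integral and using the equation,
\[
\tfrac{d}{dt}g(t,0)\;=\;(-\Delta_v)^{-1}\!\bigl[\bar a_{ij}\partial_{ij}f\bigr](t,0)\;+\;(-\Delta_v)^{-1}[f^2](t,0).
\]
The reaction term is bounded via $f\le 1$: $(-\Delta_v)^{-1}[f^2](t,0)\le g(t,0)$. For the diffusion term, one rewrites $\bar a_{ij}\partial_{ij}f$ using the divergence form of $Q$ and integrates by parts against $|v|^{-1}$. The radial symmetry of $f$ is essential here: it forces the Landau current to be radial, reducing the apparently singular three-dimensional integral to a convergent one-dimensional expression that can be controlled linearly by $C(1+g(t,0))$. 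Gronwall on the backward interval $[-T,0]$ applied to the hypothesis $c_0\le g(0,0)\le 1$ yields the desired two-sided bound.

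The upper bound on $\{\bar a_{ij}(t,v)\}$ on $B_R$ is then immediate from $g(t,v)\le g(t,0)\le C(T)$. For the lower bound away from $v=0$, we apply Lemma \ref{l:landau-coefficients} with $\gamma=-3$, which delivers $\bar a_{ij}(t,v)e_ie_j\ge c\langle v\rangle^{-3}|e|^2$ in every direction. The three inputs to that lemma are verified as follows: the mass lower bound follows from $g(t,0)\ge c(c_0,T)$ together with $f\le 1$ via a direct splitting argument (any radial $f\le 1$ with $\int_0^\infty rf(r)\,dr\ge c$ must carry mass $\gtrsim c^{3/2}$ in a bounded region); the entropy upper bound $\int f\log f\,dv\le 0$ is the trivial consequence of $f\le 1$; and the energy upper bound is obtained by combining conservation of energy in the space-homogeneous setting with the localization of mass enforced by the uniform bound $g(t,0)\le C(T)$. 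Since $v\in B_R$ forces $\langle v\rangle^{-3}\ge(1+R^2)^{-3/2}$, we obtain $\{\bar a_{ij}(t,v)\}\ge c_1(c_0,R,T)\,I$.

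The principal technical obstacle is the explicit calculation of $(-\Delta_v)^{-1}[\bar a_{ij}\partial_{ij}f](t,0)$: the integrand $|v|^{-1}\bar a_{ij}\partial_{ij}f$ has several apparent singularities at the origin that only cancel after integration by parts and the full use of radial symmetry, and the resulting expression must be controlled linearly in $g(t,0)$ alone, since no higher $L^p$ information on $f$ is available from the hypotheses. A secondary subtlety is propagating a uniform energy upper bound on the backward interval using only the information encoded in $g(t,0)\le C(T)$ and $f\le 1$; this is the step where the $T$-dependence of $c_1,C_1$ genuinely enters.
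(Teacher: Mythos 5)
Your overall architecture (control the single scalar $g(t,0)=(-\Delta_v)^{-1}f(t,0)$ by a differential inequality, use the trace bound and radial monotonicity of the Newtonian potential for the upper bound, and isotropy at the origin for the lower bound there) parallels the paper's proof, and several individual observations are correct. However, two steps are genuinely gapped. First, and most seriously, your lower bound for $\{\bar a_{ij}(t,v)\}$ away from $v=0$ routes through Lemma \ref{l:landau-coefficients}, which requires an upper bound on the energy $\int f|v|^2\dd v$. No such bound is available: the hypotheses of Lemma \ref{l:coefficients_comparable} are only $0\le f\le 1$ and $c_0\le (-\Delta_v)^{-1}f(0,0)\le 1$, and (as the paper stresses when applying the lemma to blow-up limits) the solution may have infinite mass and energy. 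Your proposed derivation fails on both counts: ``conservation of energy'' presupposes finite energy and would in any case make $c_1$ depend on the initial energy rather than on $c_0,R,T$ only, contradicting the statement; and the bound $g(t,0)\le C(T)$ does not localize mass, since $\int f(v)|v|^{-1}\dd v$ can be bounded while all of the (possibly infinite) mass sits at arbitrarily large velocities, in which case $\bar a_{ij}$ near the origin could degenerate if one only knew a global mass lower bound. The paper avoids this entirely by using the radial structure at this step: the minimal eigenvalue of $\bar a_{ij}(t,v)$ is the radial one, given by an explicit one-dimensional formula in $\tilde f(t,s)$, which is bounded below by a multiple of $|v|^{-3}s_0^3\,(-\Delta_v)^{-1}f(t,0)$ minus an error $\sim |v|^{-3}s_0^5\sup f$ controlled by $f\le 1$; choosing $s_0$ small gives $c_1$ depending only on $c_0,R,T$.

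Second, the heart of the argument --- the differential inequality for $g(t,0)$ --- is asserted rather than proved, and the route you sketch has an obstruction. Integrating by parts directly against the weight $|v|^{-1}$ produces terms involving $D^2|v|^{-1}\sim |v|^{-3}$ contracted with the projection kernel $(|w|^2\delta_{ij}-w_iw_j)|w|^{-3}$, i.e.\ an integrand of size $|v|^{-3}|w|^{-1}$, which is not locally integrable in $v$ near the origin; the cancellation you invoke from radial symmetry is not exhibited, and the paper's estimate does not use radial symmetry at this step at all. Instead, the paper replaces $|v|^{-1}$ by a smooth weight $\varphi$ equal to $|v|^{-1}$ away from the origin, pays an error of size $r_0^2\|f\|_{L^\infty}$ controlled by $f\le 1$, and then bounds $|a'(t)|$ by splitting the $(v,w)$ integration; the resulting inequality is not linear but log-linear, $|a'(t)|\lesssim a(t)(1+\log(1+a(t)))$, which still closes the argument (at most double-exponential growth backward in time, at most exponential decay), whereas your claimed purely linear control $C(1+g(t,0))$ is not established and is not what the available estimates give. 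Since you yourself label this computation ``the principal technical obstacle,'' the proposal as written leaves the decisive estimate of the lemma unproved.
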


\begin{proof}
We first estimate the value of the coefficients at $v=0$. Recall that
\[ \{ \bar a_{ij}(t,0) \} = \frac 13 (-\Delta)^{-1} f(t,0) \I = c \left( \int_{\R^3} f(t,v) |v|^{-1} \dd v \right) \I . \]

Let $r_0 > 0$ be so that $r_0^2 < c_0/2$.
Let $\varphi(v)$ be a smooth positive function so that $\varphi(v) = |v|^{-1}$ whenever $|v|>1$, and $0 < \varphi(v) \leq |v|^{-1}$ elsewhere. We define
\[ a(t) := \int_{\R^3} f(t,v) \varphi(v) \dd v.\]
Naturally, $a(t) \leq (-\Delta)^{-1}f(t,0)$. Also, using that $0 \leq f(t,v) \leq 1$, we also have $a(t) \geq (-\Delta)^{-1}f(t,0) - r_0^2$. Thus $c_0/2 \leq a(0) \leq 1$.

We compute $a'(t)$ using the integral expression for the Landau operator.
\begin{align*}
 a'(t) &= \frac 1 {8 \pi} \int \varphi(v) \partial_i \int \left( |w|^2 \delta_{ij} - w_i w_j \right) |w|^{-3} (f(t,v+w) \partial_j f(t,v) - f(t,v) \partial_j f(t,v+w)) \dd w \dd v, \\
 \intertext{We integrate by parts twice to get,}
 &= \frac 1 {8 \pi} \iint \left( \partial_{ij} \varphi(v) \left( |w|^2 \delta_{ij} - w_i w_j \right) |w|^{-3}  + 2 \partial_i \varphi(v) \frac {w_i}{|w|^3} \right) f(t,v+w) f(t,v) \dd w \dd v.
\end{align*}
Taking absolute values, we have
\[ |a'(t)| \lesssim \iint \left( \langle v \rangle^{-3} |w|^{-1}  + \langle v \rangle^{-2} |w|^{-2} \right) f(t,v+w) f(t,v) \dd w \dd v. \]

In order to estimate $|a'(t)|$, we divide the domain of integration in various subdomains. We start with $|v+w|<2\langle v \rangle$.
\begin{align*}
I &:= \iint_{|v+w|<2\langle v \rangle} \left( \langle v \rangle^{-3} |w|^{-1}  + \langle v \rangle^{-2} |w|^{-2} \right) f(t,v+w) f(t,v) \dd w \dd v, \\
\intertext{Using $f(t,v+w) \leq 1$,}
&\lesssim \int f(t,v) \left( \int_{|v+w|<2\langle v \rangle} \langle v \rangle^{-3} |w|^{-1}  + \langle v \rangle^{-2} |w|^{-2} \dd w \right) \dd v, \\
&\approx \int f(t,v) \langle v \rangle^{-1} \dd v \approx a(t)
\end{align*}

We continue with one of the terms in the integrand in the subdomain $|v+w| > 2\langle v \rangle$.
\begin{align*}
II &:= \iint_{|v+w| > 2\langle v \rangle} \langle v \rangle^{-2} |w|^{-2} f(t,v+w) f(t,v) \dd w \dd v, \\
\intertext{Using $f(t,v) \leq 1$, and changing variables $z=v+w$,}
&\lesssim \int f(t,z) \left( \int_{\langle v \rangle < |z|/2} \langle v \rangle^{-2} |v-z|^{-2} \dd v \right) \dd z, \\
&\lesssim \int f(t,z) \langle z \rangle^{-1} \dd z \approx a(t)
\end{align*}

We finish with the second term in the integrand in the subdomain $|v-w| > 2\langle v \rangle$.
\begin{align*}
II &:= \iint_{|v+w| > 2\langle v \rangle} \langle v \rangle^{-3} |w|^{-1} f(t,v+w) f(t,v) \dd w \dd v \\
\intertext{Changing variables $z=v+w$,}
&\lesssim \int f(t,z) \left( \int_{\langle v \rangle < |z|/2} \langle v \rangle^{-3} |v-z|^{-1} f(t,v) \dd v \right) \dd z \\
\intertext{Note that the domain of the inner integral is empty if $|z|<2$. Thus, $|v-z|^{-1} \lesssim \langle z \rangle^{-1}$.}
&\lesssim \int f(t,z) \langle z \rangle^{-1} \left( \int f(t,v) \langle v \rangle^{-3} \dd v \right) \dd z \\
\intertext{For an arbitrary $R>0$, we divinde the inner integral into the two subdomains $B_R$ and $\R^3 \setminus B_R$.}
&\lesssim \int f(t,z) \langle z \rangle^{-1} \left( \int_{B_R} f(t,v) \langle v \rangle^{-3}  \dd v  + \int_{\R^3 \setminus B_R} f(t,v) \langle v \rangle^{-3}  \dd v  \right) \dd z, \\
\intertext{We use $f(t,v) \leq 1$ in the first term, and the value of $a(t)$ to bound the second term.}
&\lesssim \int f(t,z) \langle z \rangle^{-1} \left( \log(1+R^3) + \langle R \rangle^{-2} a(t)  \right) \dd z. \\
\intertext{Choosing $R = \sqrt{a(t)}$, we get}
&\lesssim a(t) \left( 1+ \log(1+a(t)) \right).
\end{align*}

Adding the three terms $I$, $II$ and $III$, we conclude that $|a'(t)| \lesssim a(t) ( 1+ \log(1+a(t)))$. This ODE, together with the value of $a(0)$, implies that $a(t)$ may grow at most double exponentially as $t \to -\infty$. It has to stay bounded in any interval of time $[-T,0]$. Moreover, for small values of $a(t)$ the logarithmic correction is irrelevant. The value of $a(t)$ may decay at most exponentially as $t \to -\infty$. This ODE, together with $a(0) \geq c_0/2$, gives us a lower bound for $a(t)$ for $t \in [-T,0]$.

It is easy to obtain bounds for $(-\Delta)^{-1}f(t,0)$ from the values of $a(t)$. Indeed, $a(t) \leq (-\Delta)^{-1}f(t,0) \leq a(t) + r_0^2 \|f\|_{L^\infty}$. Thus, we have already obtained lower and upper bounds for the coefficients $\bar a_{ij}(t,0)$ for $t \in [-T,0]$. We need to extend these bounds for nonzero values of $v \in B_R$.

For the upper bounds, we observe that $\tr \bar a_{ij}(t,v) = (-\Delta)^{-1} f(t,v)$. Therefore
\begin{align*} 
\tr \bar a_{ij}(t,v) &= c \int_{\R^3} |v-w|^{-1} f(t,w) \dd w \\
&\lesssim \int_{B_r(v)} |v-w|^{-1} f(w) \dd w + \int_{\R^3 \setminus B_r(v)} |v-w|^{-1} f(w) \dd w \\
&\lesssim r^2 (\sup f) + \frac{|v|+r}r (-\Delta)^{-1} f(t,0).
\end{align*}
Choosing any arbitrary $r>0$, we get an upper bound for $\bar a_{ij}(t,v)$.

To get a lower bound for $\bar a_{ij}(t,v)$, we use that $f$ is radially symmetric in $v$. Let $\tilde f : [-T,0] \times [0,\infty) \to [0,1]$ so that $f(t,v) = \tilde f(t,|v|)$. The smallest eigenvalue $\lambda$ of $\bar a_{ij}(t,v)$ points in the radial direction. It can be computed from $\tilde f$ by the formula
\begin{align*}
\lambda(t,v) &= \frac 13 |v|^{-3} \int_0^{|v|} s^4 \tilde f(t,s) \dd s + \frac 13 \int_{|v|}^\infty s \tilde f(t,s) \dd s. \\
\intertext{We choose any $s_0 < |v|$ and get}
&\geq \frac 13 |v|^{-3} s_0^3 \int_0^\infty s \tilde f(t,s) \dd s - \frac 16 |v|^{-3} s_0^5 (\sup \tilde f) \\
&\geq \frac 13 |v|^{-3} s_0^3 (-\Delta)^{-1} f(t,0) - \frac 16 |v|^{-3} s_0^5. 
\end{align*}
This gives us a lower bound if we choose $s_0$ sufficiently small, and we conclude the proof.
\end{proof}

Going back to the blow-up sequence $f_k$, because of Lemma \ref{l:coefficients_comparable}, we have that the coefficients $a^k(t,v)$ will be uniformly elliptic in any bounded set $[-T,0] \times B_R$. Using standard parabolic estimates (first the theorem by Krylov and Safonov, and then Schauder estimates) it is easy to see that the sequence $f_k$ is uniformly smooth on bounded sets. We pass to the limit so that, up to extracting a subsequence, there is a function $f_\infty: (-\infty,0] \times \R^d \to [0,1]$ and coefficients $\bar a_{ij}^\infty : (-\infty,0] \times \R^d \to \R$, so that
\begin{align*}
f_k &\to f_\infty, \\
\partial_t f_k &\to \partial_t f_\infty, \\
D^2_v f_k &\to D^2_v f_\infty, \\
\bar a_{ij}^k &\to \bar a_{ij}^\infty,
\end{align*}
with convergence being uniform over every compact set. Clearly, the function $f_\infty$ solves
\[ \partial_t f_\infty = \bar a_{ij}^\infty \partial_{ij} f_\infty + f_\infty^2 \qquad \text{ in } (-\infty,0] \times \R^3.\]
From Fatou's lemma, we see that
\[ \bar a_{ij}^\infty = \lim_{k \to \infty} c \int_{\R^3} (|w|^2 \I - w_i w_j) |w|^{-3} f_k(v-w) \dd w \geq c \int_{\R^3} (|w|^2 \I - w_i w_j) |w|^{-3} f_\infty(v-w) \dd w.\]
We cannot say that $f_\infty$ is an ancient solution of the Landau-Coulomb equation because the coefficients $\bar a_{ij}^\infty$ are not exactly given by the formula \eqref{e:landau-coulomb2} applied to $f_\infty$. Yet, the inequality above tells us that the diffusion is only \emph{enhanced} in the limit. Intuitively, more diffusion should only improve our regularity estimates.

Admittedly, it is far from clear how to derive a contradiction from this blowup analysis. It seems reasonable to expect that the only bounded ancient solutions to the Landau-Coulomb equation are stationary Maxwellians. However, even if we start with a solution $f$ that has finite mass, energy and entropy, none of these quantities will be preserved through the sequence and blow-up limit. The final function $f_\infty$ that we obtain is a smooth, bounded function that belongs in addition to $L^\infty_{loc}((-\infty,0], L^1_{-1}(\R^d))$ (as a consequence of the upper bounds in Lemma \ref{l:coefficients_comparable}). There is no obvious reason to expect the blow-up limit $f_\infty$ to have finite mass or energy.

We state the corresponding Liouville-type conjecture here.

\begin{conjecture} \label{c:liouville}
Let $f : (-\infty,0] \times \R^3 \to [0,\infty)$ be a smooth classical solution of \eqref{e:space-homogeneous}, where $Q$ is the Landau-Coulomb operator \eqref{e:landau-coulomb2}. We make the following assumptions.
\begin{itemize}
\item For all $(t,v) \in (-\infty,0] \times \R^3$, we have $0 \leq f(t,v) \leq 1$.
\item For every $t \in (-\infty,0]$, there is some constant $C_t$ so that \[ \int_{\R^3} f(v) \langle v \rangle^{-1} \dd v \leq C_t.\]
\end{itemize}
Then $f$ must be a stationary Maxwellian.
\end{conjecture}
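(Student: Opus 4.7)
The plan is to argue in three stages: establish uniform regularity for the ancient solution, produce a Lyapunov functional that forces rigidity, and combine these to deduce stationarity together with $Q(f,f)=0$. Stage one starts from the two hypotheses and aims for two-sided bounds on $\bar a_{ij}(t,v)$ on every slab $[-T,0]\times B_R$. The proof of Lemma \ref{l:coefficients_comparable}, run backward in time, already gives an ODE of the form $|a'(t)|\lesssim a(t)(1+\log(1+a(t)))$ for a regularized truncation $a(t)=\int f(t,v)\varphi(v)\dd v$ of $(-\Delta)^{-1}f(t,0)$. Combined with $f\leq 1$, this propagates the upper bound on the weighted integral backward from $t=0$, and, provided $f$ is not identically zero, a matching positive lower bound. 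With $\bar a_{ij}$ uniformly elliptic on slabs, the parabolic Schauder estimates (the classical analogue of Theorem \ref{t:schauder-2nd-order} in the space-homogeneous setting) together with a standard bootstrap yield uniform $C^\infty$ estimates on compact subsets of $(-\infty,0]\times\R^3$.

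For the rigidity step, the natural functional---the entropy $\int f\log f\dd v$---need not be finite here. I would instead work with a localized variant $F_\phi(t) := \int \phi(v)[f\log f - f + 1]\dd v$ for a smooth compactly supported cutoff $\phi$. Differentiating in $t$ and integrating by parts produces a non-negative dissipation term controlling an $\bar a_{ij}$-weighted Dirichlet form of $\log f$ on $\supp \phi$, plus boundary error terms involving $\nabla\phi$ that are controlled by the uniform coefficient bounds from stage one. If the dissipation can be shown integrable over $(-\infty,0]$, then along some sequence $t_k\to-\infty$ the Dirichlet form tends to zero, forcing $f(t_k,\cdot)$ to approach a local Maxwellian profile. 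A forward-propagation argument, combining parabolic regularity with the fact that Maxwellians are stationary, would then upgrade this rigidity to every $t \in (-\infty,0]$.

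The principal obstacle is the rigidity step: without a globally finite conserved quantity, the standard $H$-theorem is unavailable, and one must design a localized Lyapunov functional whose dissipation still produces genuine rigidity, matching cutoff errors to the precise decay of $\bar a_{ij}$. A complementary strategy would exploit the two-parameter scale invariance $\tilde f(t,v) = \alpha f(\alpha t, \beta v)$ of the Landau-Coulomb equation (with $\alpha,\beta > 0$) to reduce to a canonical profile, though the non-local nature of $\bar a_{ij}$ makes a standard blow-down analysis delicate. Both routes ultimately hinge on upgrading the hypothesis $\int f(t,\cdot)\langle v\rangle^{-1}\dd v\leq C_t$ from a pointwise bound in $t$ to one uniform in time---a step that appears closely tied to the open problem of preventing the blow-down limits associated with Conjecture \ref{c:Linfty-growth} from escaping to infinity.
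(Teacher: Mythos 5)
There is no proof of this statement to compare against: in the paper this is Conjecture \ref{c:liouville}, stated explicitly as an open problem (``a positive resolution of Conjecture \ref{c:liouville} seems to be very difficult with current techniques''). Your text is a program rather than a proof, and the step you yourself flag as ``the principal obstacle'' --- the rigidity step --- is precisely the open heart of the conjecture, so the proposal does not establish the statement. Concretely, the localized functional $F_\phi(t)=\int \phi\,(f\log f-f+1)\dd v$ is not monotone: differentiating produces, besides the good dissipation term, commutator terms involving $\nabla\phi$, $\nabla_v \bar a_{ij}$ and the reaction term ($\bar c=f$, i.e.\ the $+f^2$ term in \eqref{e:landau-coulomb2}), none of which have a sign or are shown to be absorbed by the dissipation. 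Moreover, integrability of the dissipation over the infinite interval $(-\infty,0]$ cannot follow from the hypothesis $\int f(t,\cdot)\langle v\rangle^{-1}\dd v\leq C_t$, since $C_t$ is allowed to grow as $t\to-\infty$ (and, as the paper's ODE $|a'|\lesssim a(1+\log(1+a))$ indicates, the weighted integral may indeed grow double exponentially backward in time), so no uniform-in-time Lyapunov bound is available. Finally, even if one extracted a sequence $t_k\to-\infty$ along which $f(t_k,\cdot)$ is locally Maxwellian-like, the proposed ``forward propagation'' needs a uniqueness or stability statement for the Cauchy problem in the class $L^\infty\cap L^1_{-1}$ of possibly infinite mass, energy and entropy; the paper points out that no short-time Cauchy theory exists in this class, so this step is also unsupported.

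A secondary gap is in your stage one: Lemma \ref{l:coefficients_comparable} is proved in the paper only for \emph{radially symmetric} solutions (the lower bound on the smallest eigenvalue of $\bar a_{ij}$ away from the origin uses the radial structure of $f$ explicitly), and it assumes the pointwise nondegeneracy $c_0\leq(-\Delta)^{-1}f(0,0)$. Conjecture \ref{c:liouville} assumes neither, so the two-sided ellipticity on slabs $[-T,0]\times B_R$, and hence the uniform $C^\infty$ bounds you build on it, are not available in the stated generality; at best you would be attacking a radially symmetric special case. In short, what you have written is a reasonable outline of why the conjecture is plausible and where the difficulties sit, but both the ellipticity input (in the non-radial case) and the rigidity mechanism remain unproven, so the argument does not close.
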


Note that the second assumption in Conjecture \ref{c:liouville} is necessary just to make sense of the formula for the coefficients in \eqref{e:landau-coulomb2}. In the radially symmetric case, the constants $C_t$, are related for different values of $t$ through Lemma \ref{l:coefficients_comparable}.

The blow-up limit $f_\infty$ cannot be a stationary Maxwellian because it would contradict \eqref{e:blowup-nonconstant}. That is the basic idea of how the blowup analysis would rule out the emergence of singularities. However, a positive resolution of Conjecture \ref{c:liouville} seems to be very difficult with current techniques.

It would be interesting to study solutions to the space homogeneous Landau-Coulomb equation \eqref{e:space-homogeneous}, with $Q$ given by \eqref{e:landau-coulomb2}, in the class $f \in L^\infty([0,T] \times \R^3) \cap L^\infty([0,T], L^1_{-1}(\R^3))$. These are solutions with possibly infinite mass, energy and entropy. There is currently no short-time Cauchy theory in this class.

\section{Open problems}

In Section \ref{s:space-homogeneous}, we analyzed the problem of finding $L^\infty$ bounds in the case of soft potentials, and in particular the problem of global existence of smooth solutions for the homogeneous Landau-Coulomb equation. In this section we quickly review other open problems that are related to the regularity issues reviewed in this paper.

\subsection{Unconditional regularity estimates}

Observing the conditional regularity results of Theorems \ref{t:conditional-regularity_landau} and \ref{t:conditional-regularity_boltzmann}, the obvious question is whether the hydrodynamic bounds \eqref{e:mass_density_assumption}, \eqref{e:energy_density_assumption} and \eqref{e:entropy_density_assumption} can possibly be proved.

If we somehow established the bounds for the hydrodynamic quantities in \eqref{e:mass_density_assumption}, \eqref{e:energy_density_assumption} and \eqref{e:entropy_density_assumption}, we would conclude that the inhomogeneous Landau and Boltzmann equations have global smooth solutions, for any initial data. This is a remarkable open problem that Cedric Villani describes in the first chapter of his book \cite{villani2012theoreme}. If unconditional regularity estimates were true, it seems that they would be very difficult to prove.

At certain scale, the hydrodynamic quantities associated to solutions of the Landau or Boltzmann equation approximately solve the compressible Euler equation (see \cite{bardos1991}). There are some recent results studying the stability of implosion singularities for the compressible Euler equation, and producing singularities for the compressible Navier-Stokes equation (see \cite{merle2019smooth} and \cite{merle2019implosion}). Can there be similar implosion singularities for the Boltzmann equation? The idea is to look for a solution that mostly flows toward a single point. The mass and energy densities would be blowing up at that point in a way that resembles the blowup profile of the compressible Euler equation in \cite{merle2019smooth}. This scenario appears to be more plausible than the unconditional regularity of the previous paragraph. If such an implosion singularity was indeed possible for the Boltzmann and/or Landau equation, then the bounds \eqref{e:mass_density_assumption}, \eqref{e:energy_density_assumption} and \eqref{e:entropy_density_assumption} would not be true in general.

\subsection{The kinetic Krylov-Safonov theorem}

We explain a kinetic version of De Giorgi - Nash - Moser theorem in Theorem \ref{t:kinetic-DG-div}. It concerns the equation \eqref{e:kinetic-div-form}, which is a kinetic equation with second order diffusion in divergence form.

A central result for parabolic equations in non-divergence form is the theorem of Krylov and Safonov \cite{krylov1980}. Its statement is analogous to that of De Giorgi, Nash and Moser, but for diffusion in non-divergence form. It would be natural to expect a result of that kind to hold for kinetic equations as well. It is still an open problem. We state it here as a conjecture.

\begin{conjecture} \label{c:kinetic-krylov}
Assume $f: [0,1] \times B_1 \times B_1 \to \R$ is a (classical) solution of an equation of the following form
\[ f_t + v \cdot \nabla_x f = a_{ij}(t,x,v) \partial_{v_i v_j} f.\]
Here, the coefficients $a_{ij}$ are assumed to be uniformly elliptic in the sense that there are constants $\Lambda \geq \lambda > 0$, such that for all $(t,x,v) \in [-1,0] \times B_1 \times B_1$,
\[ \lambda \I \leq \{a_{ij}(t,x,v)\} \leq \Lambda \I.\]
Then, there exists constants $\alpha>0$ and $C$, depending only on dimension and the ellipticity parameters $\lambda$ and $\Lambda$, so that 
\[ \|f\|_{C^\alpha((1/2,1)\times B_{1/2} \times B_{1/2})} \leq C \|f\|_{L^\infty([0,1] \times B_1 \times B_1)} .\]
\end{conjecture}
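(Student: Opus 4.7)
The plan is to adapt the classical Krylov-Safonov argument to kinetic cylinders $Q_r$, using the natural $(r^2,r^3,r)$ scaling that corresponds to $s=1$ in Definition \ref{d:distance}, composed via the left Galilean group $(s,y,w)\circ(t,x,v)$. The proof would have the four standard steps: an ABP-type estimate adapted to the hypoelliptic geometry, a point-to-measure estimate produced by a carefully chosen barrier, a kinetic Krylov-Safonov covering lemma, and iteration of the latter to obtain geometric decay of oscillation.

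For the ABP step, the classical parabolic proof uses a convex envelope in $(t,v)$ and bounds the infimum of a supersolution in terms of a norm of the source on the contact set. In the kinetic non-divergence setting only the $v$-Hessian appears in the operator, so a natural first attempt is to take the convex envelope only in $v$ at each fixed $(t,x)$; the transport term $v\cdot\nabla_x f$ must then be absorbed as an unknown forcing. One would likely need to trade regularity between variables using the hypoelliptic scaling (one $x$-derivative costs three $v$-derivatives) and to track contact-set information along characteristics $\dot x=v$, producing an anisotropic $L^p$ bound on the contact set.

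Assuming such an ABP inequality, the point-to-measure step constructs a barrier $\phi$ on $Q_1$ which is strictly negative at one point in a smaller sub-cylinder and non-negative outside $Q_1$, with $\partial_t\phi + v\cdot\nabla_x\phi - a_{ij}\partial_{v_iv_j}\phi$ bounded. Applying ABP to $(f-\phi)_{-}$ then gives a lower bound on the measure of $\{f\geq c_0\}\cap Q_r$ whenever $f\geq 0$ in $Q_1$ and $f(\bar z)\geq 1$ for some $\bar z$ in a fixed sub-cylinder. The covering step would rely on a kinetic stacked-cylinders or \emph{Ink-spots} lemma, in the spirit of the nonlocal version underlying Theorem \ref{t:kinetic-DG-integral}; since information propagates forward in time and in $x$ along characteristics, the translates must be composed via the Galilean group rather than the Euclidean one. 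Iteration of the point-to-measure estimate then yields a geometric decay of oscillation, and hence Hölder regularity.

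The core obstacle is the ABP estimate. The convex-envelope argument does not transplant cleanly because the operator is degenerate in $x$: the contact set has no a priori extension in the $x$ direction, and yet the equation spreads information in $x$ through transport. Existing ABP-type bounds for hypoelliptic operators (obtained either via stochastic representation, or by perturbing the explicit Kolmogorov fundamental solution) seem to require some regularity on $a_{ij}$, which is exactly what the Krylov-Safonov framework is meant to dispense with. Bridging this gap with only bounded measurable coefficients is the central difficulty of Conjecture \ref{c:kinetic-krylov}, and it likely requires a genuinely new idea marrying the $v$-ellipticity with the transport-driven spread in $x$ in a way that still produces a true maximum-principle-type estimate.
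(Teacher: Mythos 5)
This statement is not proved in the paper at all: it is stated explicitly as an open problem (Conjecture \ref{c:kinetic-krylov}), so there is no argument of the paper's to compare yours against. Judged on its own terms, your proposal is a strategy outline rather than a proof, and you say as much yourself. The decisive step --- an ABP-type or point-to-measure estimate for $f_t + v\cdot\nabla_x f - a_{ij}(t,x,v)\partial_{v_iv_j}f$ with coefficients that are merely bounded and measurable --- is never established; everything downstream (the barrier, the kinetic ink-spots/stacked-cylinders covering, the oscillation iteration) is conditional on it. Taking the convex envelope in $v$ alone at fixed $(t,x)$ does not produce a usable contact-set estimate: the operator gives no information in the $x$ direction on the contact set, the transport term cannot simply be ``absorbed as an unknown forcing'' without losing the sign structure that the maximum-principle argument needs, and the measure estimate you ultimately require must live in the full $(t,x,v)$ geometry, not slice-by-slice in $v$. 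The alternatives you mention (stochastic representation, perturbation of the Kolmogorov kernel) indeed require regularity of $a_{ij}$, which is exactly what must be dispensed with; so the gap you flag is the genuine one, and it is precisely why the statement remains a conjecture.

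Your diagnosis of where the difficulty sits is accurate and consistent with the paper's framing: the divergence-form analogue (Theorem \ref{t:kinetic-DG-div}) is known because De Giorgi--Nash--Moser techniques transfer to the kinetic setting via energy methods, whereas the non-divergence case hinges on a measure estimate of Krylov--Safonov type for which no kinetic substitute is currently available. But identifying the obstruction is not the same as overcoming it, so the proposal cannot be accepted as a proof; at best it is a reasonable research plan whose first and hardest step is missing.
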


If would also make sense to expect an integro-differential version of Conjecture \ref{c:kinetic-krylov} to hold. One can state it for different ellipticity conditions on the kernel. As a starting point, we state the conjecture using a strong notion of fractional ellipticity.

\begin{conjecture} \label{c:kinetic-krylov-integral}
Assume $f: [0,1] \times B_1 \times \R^d \to \R$ is a (classical) solution of \eqref{e:kinetic-integral} for $(t,x,v) \in (0,1] \times B_1 \times B_1$ and some bounded function $h$. We assume that the kernel $K$  is symmetric $K(t,x,v,v+w) = K(t,x,v,v-w)$, and that there exist constants $\Lambda \geq \lambda>0$ such that
\[ \lambda |v'-v|^{-d-2s} \leq K(t,x,v,v') \leq \Lambda |v'-v|^{-d-2s}.\]
Then there are constants $\alpha>0$ and $C$, depending only on dimension, $\lambda$ and $\Lambda$, so that $f$ satisfies the estimate
\[ \|f\|_{C^\alpha((1/2,1)\times B_{1/2} \times B_{1/2})} \leq C \left( \|f\|_{L^\infty([0,1] \times B_1 \times \R^d)} + \|h\|_{L^\infty([0,1] \times B_1 \times B_1)} \right).\]
\end{conjecture}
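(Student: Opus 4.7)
The plan is to adapt the proof of the nonlocal Krylov--Safonov theorem for elliptic equations to the kinetic setting, exploiting the Galilean group and the anisotropic scaling $(r^{2s}, r^{1+2s}, r)$ on the kinetic cylinders $Q_r$ of Section 5. Since the class of equations \eqref{e:kinetic-integral} is invariant under both of these transformations, it suffices to prove an oscillation-decay lemma on $Q_1$. The argument breaks into four steps: (i) a kinetic nonlocal ABP-type estimate, (ii) a point estimate, (iii) a Calder\'on--Zygmund covering to obtain an $L^\varepsilon$ estimate, and (iv) the standard iteration to upgrade to $C^\alpha$ via the Galilean scaling.

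The first step is a nonlocal ABP estimate on kinetic cylinders. For a subsolution $f$ of \eqref{e:kinetic-integral}, I would seek an inequality of the form
\[ \sup_{Q_1} f^+ \le C \Big( \|f^+\|_{L^\infty((-1,0] \times B_1 \times (\R^d \setminus B_2))} + |\{\text{contact set}\}|^{\alpha_0} \, \|h\|_{L^\infty(Q_1)} \Big), \]
for an appropriate contact set built from kinetic paraboloid-type barriers. Those barriers should be patterned on the fundamental solution of the fractional Kolmogorov equation
\[ g_t + v \cdot \nabla_x g + (-\Delta_v)^s g = 0, \]
which, by Section 5, is smooth, positive, and has polynomial decay in both $x$ and $v$. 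The symmetry hypothesis $K(t,x,v,v+w) = K(t,x,v,v-w)$ is crucial, since it lets the operator $L$ act pointwise on $C^{1,1}$ test functions, enabling a comparison argument at contact points.

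The second and third steps then follow the classical template. Using the ABP, I would prove a point estimate: if $f \ge 0$ on $(-1,0] \times B_1 \times \R^d$ solves \eqref{e:kinetic-integral} and $f \ge 1$ on a small kinetic sub-cylinder near the top of $Q_1$, then $\inf_{Q_{1/2}} f \ge c > 0$, with constants depending only on $\lambda$, $\Lambda$, $s$, and the dimension. The far-field contribution of $f$ (from $v'$ with $|v'| \gg 1$) is controlled via $K \le \Lambda |v-v'|^{-d-2s}$ and absorbed into $\|f\|_{L^\infty} + \|h\|_{L^\infty}$. An ink-spot / Calder\'on--Zygmund covering adapted to kinetic cylinders, in the spirit of the one behind Theorem \ref{t:kinetic-DG-integral}, converts this into an $L^\varepsilon$ estimate; the usual barrier iteration then yields oscillation decay, and the kinetic scaling together with the Galilean translation gives the claimed $C^\alpha$ bound.

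The main obstacle is step one. Even in the classical second-order kinetic setting, an ABP estimate on kinetic cylinders was only established recently and required substantial innovation to reconcile the sliding paraboloid technique with the transport term $v \cdot \nabla_x$. In the fractional case, one cannot bound $Lf$ at a contact point by the Hessian of a paraboloid, and the natural substitute is a nonlocal density on the contact set built from a kinetic fractional barrier; making this quantitative appears to be the principal difficulty. A probabilistic alternative, via the martingale problem for the jump-diffusion with deterministic drift $v$ together with Krylov-style estimates for stochastic integrals, seems plausible given the smoothness of the fundamental solution above, but would carry its own technical overhead.
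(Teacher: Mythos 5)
There is no proof of this statement to compare against: in the paper it is stated as Conjecture \ref{c:kinetic-krylov-integral}, an open problem (``As of now, both remain open''). What you have written is a research program, not a proof, and you concede as much yourself: the decisive ingredient, a kinetic nonlocal ABP-type estimate (or any substitute measure estimate leading to the point estimate), is exactly the piece that is missing in the literature, and your sketch does not supply it. Until that step exists in some quantitative form, nothing downstream is available either.

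Moreover, the claim that ``the second and third steps then follow the classical template'' is not justified. The ink-spot/Calder\'on--Zygmund covering you invoke ``in the spirit of the one behind Theorem \ref{t:kinetic-DG-integral}'' is not transferable: the H\"older estimate of Theorem \ref{t:kinetic-DG-integral} is proved by De Giorgi-type variational arguments that rest on the cancellation and coercivity structure (Proposition \ref{p:Hs_boundedness} and Theorem \ref{t:coercivity}), i.e.\ on divergence-form tools, whereas the present conjecture assumes only pointwise kernel bounds and the nondivergence symmetry $K(t,x,v,v+w)=K(t,x,v,v-w)$, under which that machinery does not apply. In the kinetic setting the covering itself is delicate, since cylinders must be stacked along the transport flow and the Galilean group, and how to combine this with a nonlocal contact-set or barrier argument (the analogue of \cite{silvestre2006holder} or of the second-order sliding-paraboloid technique) is precisely the open question. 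So your proposal identifies a plausible route and correctly locates the obstruction, but it does not prove the statement; the gap is the absence of any measure-theoretic lemma (ABP, point estimate, or growth lemma) valid for nondivergence-form kinetic integro-differential equations.
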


The elliptic version of the theorem of Krylov and Safonov is easier to prove for integro-differential equations than it is for classical second order elliptic equations (see \cite{silvestre2006holder}). Based on that, it is possible that Conjecture \ref{c:kinetic-krylov-integral} may have a simpler resolution than Conjecture \ref{c:kinetic-krylov}. As of now, both remain open.

\subsection{Bounded domains}

The current versions of the conditional regularity results for Boltzmann and Landau equations in Theorems \ref{t:conditional-regularity_boltzmann} and \ref{t:conditional-regularity_landau} do not allow any form of spacial boundary.

For physical applications, it is natural to have the space variable $x$ confined to some bounded domain. There are different boundary conditions that have been considered in the literature: diffuse reflection, specular reflection and bounce back reflection.

The boundary effects may have an impact on the regularity of the solutions to kinetic equations. See \cite{guo2010decay,kim2011thesis,guo2016bv,guo2017regularity,kim2018specular,ouyang2020} for results concerning the cutoff Boltzmann and Landau equations, and solutions near a Maxwellian. It is an interesting research direction to study the possibility of extending some form of Theorem \ref{t:conditional-regularity_boltzmann} to any domain with boundary, for any of the physical boundary conditions.

In the cutoff case, it is known that the boundary conditions produce discontinuities when the domain is not convex (see \cite{kim2011thesis}). Naturally, in the non-cutoff case the solution is expected to be smooth away from the boundary. However, it is unclear if we should expect uniform smoothness estimates up to the boundary, especially in the nonconvex case.

\subsection{Sharper conditions for coercivity estimates}

In Theorem \ref{t:coercivity}, we present a sufficient condition for a kernel $K: B_2 \times B_2 \to [0,\infty)$ to have coercivity estimates with respect to the $\dot H^s$ norm. Our conditions are not sharp. We would like to explore what kernels $K$ have the property that for some $c>0$,
\[ \iint_{B_2 \times B_2} |f(v') - f(v)|^2 K(v,v') \dd v' \dd v \geq c \iint_{B_1 \times B_1} |f(v') - f(v)|^2 |v'-v|^{-d-2s} \dd v' \dd v.\]

There are simple examples replacing $K(v,v') \dd v' \dd v$ for a singular measure that satisfy the coercivity condition. For example, in two dimensions, the operator $(-\partial_1)^{2s} + (-\partial_2)^{2s}$ is clearly coercive. It corresponds to
\[ \begin{aligned} \int_{\R^2} \int_{\R} & \left( |f(v_1+w,v_2) - f(v_1,v_2)|^2 + |f(v_1,v_2+w) - f(v_1,v_2)|^2  \right) |w|^{-1-2s} \dd w \dd v \\ & \geq c \iint_{\R^d \times \R^d} |f(v') - f(v)|^2 |v'-v|^{-d-2s} \dd v' \dd v. \end{aligned}\]
What this simple example shows is that a sharp condition for coercivity should allow for singular measures instead of $K(v,v') \dd v' \dd v$, possibly supported in a set of zero Lebesgue measure.

In the context of the Boltzmann equation, Theorem \ref{t:coercivity} allows us to obtain a coercivity estimate for the Boltzmann collision operator in terms of the mass, energy and entropy of $f$. We would need a sharper version of Theorem \ref{t:coercivity} if we want to replace the upper bound on the entropy with a lower bound on the temperature tensor.

\subsection{Conditional regularity for the cutoff Boltzmann equation}

It is well known that we should not expect any regularization from the cutoff Boltzmann equation. Yet, one may start with a smooth initial data and study the propagation of its initial regularity.

It seems plausible that if the hydrodynamic bounds \eqref{e:mass_density_assumption}, \eqref{e:energy_density_assumption} and \eqref{e:entropy_density_assumption} hold, then a solution of the inhomogeneous Boltzmann equation whose initial data is smooth and rapidly decaying would stay smooth and rapidly decaying for all time. Moreover, it is conceivable that the smoothness estimates stay uniform as $t \to \infty$, just like in the noncutoff case.

Conditional regularity estimates of this type, for the full Boltzmann equation with cutoff, have not yet been studied.

\subsection{Renormalized solutions}

For generic initial data $f_0$, Alexandre and Villani constructed certain kind of global solution $f$ of the non-cutoff Boltzmann equation. This type of generalized solutions are called \emph{renormalized solutions with defect measure}. The uniqueness of solutions is not known within this class.

It is natural to wonder if a localized version of Theorem \ref{t:conditional-regularity_boltzmann} may hold for renormalized solutions with defect measure. That is, if a renormalized solution $f$ satisfies the hydrodynamic inequalities \eqref{e:mass_density_assumption}, \eqref{e:energy_density_assumption} and \eqref{e:entropy_density_assumption} for almost every $(t,x)$ in some cylinder $(-1,0] \times B_1$, should we expect $f$ to be $C^\infty$ for $(t,x) \in (-1/2,0] \times B_{1/2}$?

\subsection{Conditional regularity for solutions without rapid decay in the moderately soft potential case}

In the case of soft potentials ($\gamma\leq 0$), Theorem \ref{t:conditional-regularity_boltzmann} requires the initial data to have rapid decay. More precisely, we assume that for every $q\geq 0$, there exists a $C_q$ so that $f_0(x,v) \leq C_q \langle v \rangle^{-q}$. The reason for this assumption is that when in the proof of Theorem \ref{t:conditional-regularity_boltzmann} we iteratively compute estimates for all derivatives of $f$, the decay of our estimates deteriorate every time we take an extra derivative. If we only start with $f_0(x,v) \leq C_q \langle v \rangle^{-q}$ for one particular value of $q$, after taking certain number of derivatives, our estimates would not decay at all, and eventually the analysis fails.

It seems difficult to imagine that if $f_0(x,v) \leq C_q \langle v \rangle^{-q}$ for some large value of $q$, and the hydrodynamic bounds \eqref{e:mass_density_assumption}, \eqref{e:energy_density_assumption} and \eqref{e:entropy_density_assumption} hold, then $f$ can possibly fail to be $C^\infty$. Yet, this problem is currently open whenever $\gamma \leq 0$.

\bibliographystyle{plain}
\bibliography{barrett}
\index{Bibliography@\emph{Bibliography}}%
\end{document}